\theoremstyle{plain}
\newtheorem{theorem}{Theorem}[section]
\newtheorem{corollary}[theorem]{Corollary}
\newtheorem{proposition}[theorem]{Proposition}
\newtheorem{lemma}[theorem]{Lemma}
\theoremstyle{definition}
\newtheorem{definition}[theorem]{Definition}
\theoremstyle{remark}
\newtheorem{remark}[theorem]{Remark}
\newtheorem{remarks}[theorem]{Remarks}
\newtheorem{example}[theorem]{Example}
\newtheorem{examples}[theorem]{Examples}
\newtheorem{notation}[theorem]{Notation}
\begin{document}
\title[Aperiodicity and Primitive ideals]{\boldmath{Aperiodicity and Primitive ideals of row-finite $k$-graphs}}
\author[Sooran Kang]{Sooran Kang}
\address{Sooran Kang, School  of Mathematics and Applied Statistics, University of Wollongong, NSW 2522, Australia}
\email{sooran@uow.edu.au}
\author[David Pask]{David Pask}
\address{David Pask, School  of Mathematics and Applied Statistics, University of Wollongong, NSW 2522, Australia}
\email{dpask@uow.edu.au}

\thanks{This research was supported by the Australian Research Council}

\begin{abstract}
We describe the primitive ideal space of the $C^{\ast}$-algebra of a row-finite $k$-graph with no sources when every ideal is gauge invariant.
We characterize which spectral spaces can occur, and compute the primitive ideal space of two examples. In order to do this we prove some new
results on aperiodicity. Our computations indicate that when every ideal is gauge invariant, the primitive ideal space only depends on the $1$-skeleton of the $k$-graph in question.
\end{abstract}

\date{\today}

\maketitle

\section{Introduction}

A $k$-graph (or higher rank graph) is a higher dimensional analog of a directed graph. The notion of a $k$-graph $\Lambda$ and its associated $C^*$-algebra $C^* ( \Lambda )$ was introduced in \cite{KP1} to provide a graphical
approach to the higher dimensional Cuntz-Krieger algebras introduced by Robertson and Steger in \cite{RS2}. Since then
$k$-graph algebras have been studied by many authors and have provided concrete examples of many classifiable $C^*$-algebras whose fine structure and
invariants may readily be computed (see  \cite{E,PRRS,RSY1,RSY2,RoS1,RoS2,SZ,SP},   amongst others). They also provide a fertile class of
examples for researchers in non-selfadjoint algebras (see \cite{DPY,DY,KP,Po,Ya} amongst others) and crossed products (see \cite{BR,Ex,FPS,PQR}).

One of the reasons for the interest in $k$-graph algebras is that they provide an important testing ground for more complicated mathematical structures such as topological graphs as in \cite{Kat1}, topological quivers as in \cite{MTom}, topological $k$-graphs as in \cite{Y} and Cuntz-Pimsner algebras themselves (see \cite{C,FR,Kat2,P} amongst others).

The purpose of this paper is to describe the primitive ideal space of a row-finite $k$-graph with no sources. This was done for row-finite $1$-graphs with no sources in \cite{BPRS}. Row-finite $k$-graphs with no sources have proved to be the most tractable class of $k$-graphs to study, and we restrict ourselves to this class to avoid the sort of technicalities which occur when analyzing the primitive ideal space of general $1$-graphs (see \cite{B,BHRS,HZ}). It is our hope that our techniques will shed light on similar computations for more complex $C^*$-algebras.

The ideal structure of the $C^*$-algebra of a row-finite $k$-graph $\Lambda$ with no sources is best understood under the hypothesis of aperiodicity on $\Lambda$, introduced in \cite{KP1} but significantly improved later in \cite{RoS1}. Under the aperiodicity hypothesis (see Definition~\ref{aperiodic}) the structure of the gauge invariant ideals of $C^* ( \Lambda )$ is completely understood (see \cite[Theorem 5.2]{RSY1}): The lattice of gauge invariant ideals is isomorphic to the lattice of saturated hereditary sets of vertices of $\Lambda$. To begin our analysis, we first seek a condition under which all ideals are gauge invariant. This was first done in \cite[Theorem 7.2]{S} using a condition called (D), which we prefer to call strong aperiodicity (see Definition~\ref{def:sap}). In Definition~\ref{def:aq} we introduce the notion of an aperiodic quartet at a vertex. We then prove new results about aperiodicity in order to give necessary conditions on a $2$-graph to be strongly aperiodic in Proposition~\ref{prop:sap}.

Following \cite{B,BPRS,BHRS,HZ} the primitive ideals of a $1$-graph algebra are described in terms of a collection of vertices called a maximal tail. In Definition~\ref{Def-MT} we adapt the definition of a maximal tail into the context of row-finite $k$-graphs with no sources. Theorem~\ref{MT}
gives necessary and sufficient conditions on a saturated hereditary set of vertices to give rise to a primitive gauge invariant ideal of $C^* ( \Lambda )$ when $\Lambda$ is strongly aperiodic. Furthermore, in Theorem~\ref{topMT} we describe the topology of the primitive ideal space of $C^* ( \Lambda )$.

We then continue the analysis of \cite{B} and describe in Lemma~\ref{topMT2} an equivalent topology of the primitive ideal space of $C^* ( \Lambda )$.
Moreover, in Theorem~\ref{SS} we give a characterization of those topological spaces which can occur as the primitive ideal space of a row-finite $k$-graph with no sources: a spectral space in which the compact open sets form a countable base. Indeed, in Corollary~\ref{AF} we show that given a row-finite $k$-graph $\Lambda$ with no sources there is an AF algebra with the same primitive ideal space. Unlike in \cite[\S 5]{B} we have been unable to find an algorithm for producing such an AF algebra from the $1$-skeleton of $\Lambda$.

Finally, we give a detailed analysis of the primitive ideal space of two $2$-graphs using the main results in this paper. In Example~\ref{ex1} we
study the first graph $\Lambda$, a skew product graph, which we show, is strongly aperiodic using Proposition~\ref{prop:sap}. Then using Theorems~\ref{MT} and \ref{topMT}, we describe its primitive ideal space. During the example we show that $\Lambda$ cannot be
realized as a cartesian product graph. In Example~\ref{ex2} we consider a second $2$-graph $\Omega \times \Omega$, the cartesian product of an aperiodic $1$-graph $\Omega$ with itself. By Theorem~\ref{prod} (ii), the $2$-graph $\Omega \times \Omega$ is also strongly aperiodic. Since $C^* ( \Omega \times \Omega ) \cong C^* ( \Omega ) \otimes C^* ( \Omega )$,  the primitive ideal space of $C^* ( \Omega \times \Omega )$ is the cartesian product of primitive ideal space of $C^* ( \Omega )$ with itself with the product topology. It turns out that $C^* ( \Omega \times \Omega )$ and $C^* ( \Lambda )$ have homeomorphic primitive ideal spaces, even though $\Lambda$ is not a cartesian product.

\section{Preliminaries}

Let $\mathbb{N}^k$ denote the monoid of $k$-tuples of natural numbers under addition, and denote the canonical generators by $e_1,\dots,e_k$. We write $n_i$ for the $i$th coordinate $n\in\mathbb{N}^k$. For $m,n\in\mathbb{N}^k$, we say that $m\le n$ if $m_i\le n_i$ for each $i$. We write $m\vee n$ for the coordinate-wise maximum of $m$ and $n$ and $m \wedge n$ for the coordinate-wise minimum of $m$ and $n$.

\begin{definition}\cite[Definitions 1.1]{KP1}.
For $k \ge 1$, a $k$-graph $\Lambda$ consists of a countable category $\Lambda$ together with a functor $d:\Lambda\rightarrow \mathbb{N}^k$ satisfying the factorization property : for every $\lambda \in \Lambda$ and $m,n\in\mathbb{N}^k$ with $d(\lambda)=m+n$, there are unique elements $\mu , \nu \in\Lambda$ such that $\lambda=\mu\nu$ and $d(\mu)=m$, $d(\nu)=n$.
\end{definition}

\begin{examples} \label{ex:kex}
\begin{enumerate}[(a)]
\item The path category of a directed graph is a $1$-graph, and vice versa. In particular for $n \ge 1$ we denote by $B_n$ the path category of the directed graph consisting of a single vertex and $n$ edges.
\item Let $\operatorname{Mor} \Omega_k = \{ (m,n) \in \mathbb{N}^k \times \mathbb{N}^k : m \le n \}$, and $\operatorname{Obj} \Omega_k = \mathbb{N}^k$
then, when it is gifted with the structure maps $r (m,n) = m$,  $s (m,n) = n$, composition $(m,n)(n,p)=(m,p)$ and degree map $d (m,n) = n-m$,
one checks that $( \Omega_k , d )$ is a row-finite $k$-graph with no sources. We identify $\operatorname{Obj} \Omega_k$ with $\{ (m,m) : m \in \mathbb{N}^k \} \subset \operatorname{Mor} \Omega_k$.
\item For $n \ge 1$ let $\underline{n} = \{ 1 , \ldots , n \}$. For $m,n \ge 1$ let  $\theta : \underline{m} \times \underline{n} \to \underline{m} \times \underline{n}$ be a bijection. Let $\mathbb{F}^2_\theta$ be the $2$-graph with single vertex $v$ and edges $f_1 , \ldots , f_m , g_1 , \ldots , g_n$, with $d ( f_i ) = e_1$ for $i \in \underline{m} $, $d (g_j ) = e_2$ for $j \in \underline{n}$ and factorization rules $f_i g_j = g_{j'} f_{i'}$ where $\theta ( i , j ) = ( i' , j' )$ for $(i,j) \in \underline{m} \times \underline{n}$ (see \cite{DPY,DY,Po,Ya}).
\end{enumerate}
\end{examples}

\noindent For $E,F \subseteq \Lambda$ and $\lambda,\nu \in\Lambda$, we define $\lambda E:=\{\lambda\mu:\mu\in E, r(\mu)=s(\lambda)\}$ and $F\nu:=\{\mu\nu:\mu\in F, s(\mu)=r(\nu)\}$, $\lambda E \nu = \{ \lambda \mu \nu : \mu \in E , s ( \lambda) = r ( \mu ), s ( \mu ) = r ( \nu) \}$.

For $n \in \mathbb{N}^k$ let $\Lambda^n = d^{-1} (n)$, then by the factorization property we may identify $\Lambda^0  = d^{-1} (0)$ with the objects of $\Lambda$, and for this reason we call elements of $\Lambda^0$ vertices. In particular, for $v\in{\Lambda}^0$ and $n\in\mathbb{N}^k$,
\[
v{\Lambda}^n=\{\lambda\in\Lambda:r(\lambda)=v\;\text{and}\;d(\lambda)=n\} .
\]

\noindent
The $k$-graph $\Lambda$ is \textit{row-finite} if the set $v{\Lambda}^{m}$ is finite for each $m\in\mathbb{N}^k$ and $v\in{\Lambda}^0$. Also, $\Lambda$ has \textit{no sources} if $v{\Lambda}^{e_i}\ne \emptyset$ for all $v\in{\Lambda}^0$ and $i\in\{1,\dots,k\}$.

A morphism between two $k$-graphs $(\Lambda_1,d_1)$ and $(\Lambda_2,d_2)$ is a functor $f:\Lambda_1\rightarrow\Lambda_2$ satisfying $d_2(f(\lambda))=d_1(\lambda)$ for all $\lambda\in\Lambda_1$.

\begin{notation}(see \cite[\S 2]{RSY2})
For $0 \le m \le n \le d ( \lambda )$, by the factorization property we have $\lambda = \lambda ( 0 , m ) \lambda ( m , n ) \lambda ( n , d ( \lambda ) )$ where $d ( \lambda ( 0 , m ) ) = m$, $d ( \lambda (m ,n ) ) = n-m$ and $d ( \lambda ( n , d ( \lambda ) ) ) = d ( \lambda ) - n$.
\end{notation}

\noindent Though originally called a $*$-representation of $\Lambda$ in \cite[Definitions 1.4]{KP1}, nowadays
we call the relations satisfied by the generators of $C^* ( \Lambda )$ the Cuntz-Krieger relations.

\begin{definition}
Let $\Lambda$ be a row-finite $k$-graph with no sources. A \textit{Cuntz-Krieger} $\Lambda$-family in a $C^{\ast}$-algebra $B$ consists of a family of partial isometries $\{t_{\lambda}:\lambda\in\Lambda\}$ satisfying the \textit{Cuntz-Krieger relations}:
\begin{enumerate}[(a)]
\item $\{t_v:v\in{\Lambda}^0\}$ is a family of mutually orthogonal projections,
\item $t_{\lambda\mu}=t_{\lambda}t_{\mu}$ for all $\lambda,\mu\in \Lambda$ with $s(\lambda)=r(\mu)$,
\item $t^{\ast}_{\lambda}t_{\lambda}=t_{s(\lambda)}$,
\item $t_v=\sum_{\lambda\in v{\Lambda}^m} t_{\lambda}t^{\ast}_{\lambda}$ for all $v \in {\Lambda}^0$ and $m\in\mathbb{N}^k$.
\end{enumerate}
\end{definition}

\begin{remark} \label{rem:sep}
As mentioned in \cite{KP1,RSY1}, given a row-finite $k$-graph $\Lambda$ with no sources, there is a $C^{\ast}$-algebra $C^{\ast}(\Lambda)$ generated by a universal Cuntz-Krieger $\Lambda$-family $\{t_\lambda:\lambda\in\Lambda\}$ with $t_\lambda \neq 0$ for all $\lambda \in \Lambda$ on a separable Hilbert space.
\end{remark}

\noindent
By the universal property of $C^{\ast}(\Lambda)$, there is a strongly continuous action of the $k$-torus $\mathbb{T}^k$, called the gauge action, $\gamma:\mathbb{T}^k\rightarrow\text{Aut}\;C^{\ast}(\Lambda)$ defined for $z=(z_1,\dots,z_k)\in\mathbb{T}^k$ and $s_{\lambda}\in C^{\ast}(\Lambda)$ by $\gamma_z(s_{\lambda})=z^{d(\lambda)}s_{\lambda}$, where $z^m=z^{m_1}_1\dots z^{m_k}_k$ for $m=(m_1,\dots,m_k)\in\mathbb{N}^k$.

\begin{definition}\cite[Definitions 2.1]{KP1}. \label{def:aperiodicity}
Let $\Lambda$ be a row-finite $k$-graph with no sources. The set $\Lambda^\infty = \{ x : \Omega_k \to \Lambda : x \text{ is a $k$-graph morphism} \}
$ is called the \textit{infinite path space} of $\Lambda$. For $x \in \Lambda^\infty$ and $v \in \Lambda^0$, we put $v \Lambda^\infty = \{ x \in \Lambda^\infty : x(0,0)=v \}$.
\end{definition}

\begin{remark}
For $\lambda \in \Lambda$, let $Z ( \lambda ) = \{ x \in \Lambda^\infty : x ( 0 , d ( \lambda ) ) = \lambda \}$. Then
$\{ Z ( \lambda ) : \lambda \in \Lambda \}$ forms a basis of compact and open sets for a topology on $\Lambda^\infty$
. For $p \in \mathbb{N}^k$, the shift map $\sigma^p : \Lambda^\infty \to \Lambda^\infty$ defined by
$\sigma^p x(m,n) = x(m+p,n+p)$ for $(m,n) \in \Omega_k$ is a local homeomorphism (for more details see \cite[Remark 2.5, Lemma 2.6]{KP1}).
\end{remark}

\noindent
Since we will be dealing with gauge invariant ideals in $C^* ( \Lambda )$, we must work with $k$-graphs which satisfy certain aperiodicity conditions, such as the one given below:

\begin{definition}\cite[Lemma 3.2 (iv)]{RoS1}.\label{aperiodic}
Let $\Lambda$ be a row-finite $k$-graph with no sources. We say that $\Lambda$  has \emph{no local periodicity} at $v\in{\Lambda}^0$ if for each pair $m\ne n\in\mathbb{N}^k$, there is a path $\lambda\in v\Lambda$ such that $d(\lambda)\ge m\vee n$ and
\begin{equation} \label{eq:lp}
\lambda(m,m+d(\lambda)-(m\vee n))\ne \lambda(n,n+d(\lambda)-(m\vee n)).
\end{equation}

\noindent
The $k$-graph $\Lambda$ is said to be \textit{aperiodic} (or satisfies the \textit{aperiodicity condition}) if every vertex has no local periodicity.
\end{definition}

\noindent Recall that a loop is a path $\mu \in \Lambda$ such that $s ( \mu ) =  r ( \mu )$.  In \cite[\S 3]{KPR2} a directed graph is said to satisfy condition (L) if every loop has an exit, which is the analogue of condition (I) in \cite{CK}. So the analogue of condition (L) for $1$-graphs is that every loop has an entrance: given $\mu \in v \Lambda^p v$ for some $v \in \Lambda^0$ there is $\kappa \in v \Lambda^p$ with $\mu \neq \kappa$. The following result is common knowledge but we have not been able to find a proof.

\begin{lemma} \label{lem:L}
For row-finite $1$-graphs with no sources, the aperiodicity condition is equivalent to condition (L).
\end{lemma}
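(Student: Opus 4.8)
The plan is to prove the contrapositive in each direction, working throughout with the infinite path space. First I would record the (routine) reformulation of local periodicity in terms of the shift: for $m\neq n$ in $\mathbb{N}$, there is $\lambda\in v\Lambda$ with $d(\lambda)\geq m\vee n$ satisfying \eqref{eq:lp} if and only if there is $x\in v\Lambda^\infty$ with $\sigma^m x\neq\sigma^n x$. One direction extends a witnessing $\lambda$ to an infinite path using the no-sources hypothesis; the other truncates an $x$ with $\sigma^m x\neq\sigma^n x$ just past the first coordinate at which the two tails disagree. Consequently, $\Lambda$ fails to be aperiodic exactly when there is a vertex $v$ and $m<n$ (put $p=n-m\geq 1$) with $\sigma^m x=\sigma^n x$ for every $x\in v\Lambda^\infty$.

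Next I would show that the failure of condition (L) forces periodicity. Suppose (L) fails, so there is a loop $\mu\in v\Lambda^p v$ with $p\geq 1$ and $v\Lambda^p=\{\mu\}$. I would first prove by induction that $v\Lambda^{jp}=\{\mu^j\}$ for all $j\geq 1$: any $\lambda\in v\Lambda^{jp}$ factors as $\lambda(0,p)\lambda(p,jp)$ with $\lambda(0,p)\in v\Lambda^p=\{\mu\}$, and since $r(\lambda(p,jp))=s(\mu)=v$ and $d(\lambda(p,jp))=(j-1)p$, the inductive hypothesis gives $\lambda(p,jp)=\mu^{j-1}$, whence $\lambda=\mu^j$. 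It follows that the only infinite path based at $v$ is $\mu^\infty=\mu\mu\mu\cdots$, and it satisfies $\sigma^p(\mu^\infty)=\mu^\infty=\sigma^0(\mu^\infty)$. As $\mu^\infty$ is the sole element of $v\Lambda^\infty$, the pair $(0,p)$ witnesses local periodicity at $v$, so $\Lambda$ is not aperiodic.

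For the converse I would assume $\Lambda$ is not aperiodic and manufacture a loop with no entrance. Choose $v$, $m<n$, and $p=n-m$ so that $\sigma^m x=\sigma^n x$ for all $x\in v\Lambda^\infty$. Using no-sources, fix $\alpha\in v\Lambda^m$ and set $w=s(\alpha)$. The key transfer step is this: for any $y\in w\Lambda^\infty$ the concatenation $\alpha y$ lies in $v\Lambda^\infty$, and applying the periodicity to $\alpha y$ gives $y=\sigma^m(\alpha y)=\sigma^n(\alpha y)=\sigma^p y$; thus every infinite path based at $w$ is $p$-periodic. This yields first that $w\Lambda^p=w\Lambda^p w$, since extending any $\beta\in w\Lambda^p$ to some $y\in w\Lambda^\infty$ with $y(0,p)=\beta$ makes $\beta=y(0,p)$ a loop (as $\sigma^p y=y$ forces $s(\beta)=w$). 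It remains to show $|w\Lambda^p|=1$: if there were distinct loops $\eta_1\neq\eta_2$ in $w\Lambda^p w$, then $z=\eta_1\eta_2\eta_1\eta_2\cdots$ would be a legitimate element of $w\Lambda^\infty$ with $\sigma^p z\neq z$ (they already disagree on the first $p$ edges), contradicting $p$-periodicity of paths from $w$. Hence $w\Lambda^p=\{\eta\}$ for a single loop $\eta$, exhibiting a loop with no entrance and so showing (L) fails.

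The routine points are the factorization-property bookkeeping and the repeated use of no-sources to extend finite paths to infinite ones. The substantive step is the converse: transporting the periodicity hypothesis from the base vertex $v$ to the interior vertex $w=s(\alpha)$, and then ruling out two distinct loops at $w$ via the interleaving path $\eta_1\eta_2\eta_1\eta_2\cdots$. I expect this interleaving argument, together with correctly matching the paper's formulation of (L) (a loop $\mu\in v\Lambda^p v$ admitting no distinct $\kappa\in v\Lambda^p$), to be where the care is needed.
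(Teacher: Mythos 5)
Your proof is correct, but it takes a genuinely different route from the paper's. The paper argues entirely with finite paths: for the direction aperiodic $\Rightarrow$ (L) it uses the pair $(0,p)$ to produce $\lambda\in v\Lambda^{tp}$ violating periodicity, so $\mu^t$ cannot exhaust $v\Lambda^{tp}$ (the same content as your induction $v\Lambda^{jp}=\{\mu^j\}$, packaged directly rather than contrapositively); but for the converse it gives a direct construction with case analysis --- whether $v$ lies on a loop, whether the loop is simple, whether $p=1$, whether $n-m$ is a multiple of $p$ --- producing explicit witnesses $\mu^n e$, $\mu^t$, or $\mu^q\kappa$ from an entrance. You instead pass to the infinite path space: your preliminary reformulation of \eqref{eq:lp} in terms of $\sigma^m x\neq\sigma^n x$ is exactly the equivalence with condition (A) that the paper only cites (via \cite[Lemma 3.2]{RoS1}), and it is indeed routine given no sources. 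Your converse is then a contrapositive with a uniform argument: transport the periodicity from $v$ to $w=s(\alpha)$ along $\alpha\in v\Lambda^m$, deduce that every element of $w\Lambda^p$ is a loop, and rule out two distinct loops by the interleaving path $\eta_1\eta_2\eta_1\eta_2\cdots$. This buys you a cleaner proof with no case distinctions (no need for simple loops, the $p=1$ case, or the ``without loss of generality $v$ lies on the loop'' reduction, which in the paper implicitly anticipates its later Lemma~\ref{lem:ishered}); the cost is the extra layer of the infinite-path reformulation and repeated appeals to no-sources to extend finite paths, which the paper's finite-path argument avoids. All the steps you flag as routine (the factorization bookkeeping, $s(\beta)=w$ from $\sigma^p y=y$, nonemptiness of $v\Lambda^m$ and $w\Lambda^p$) do go through, and your formulation of the failure of (L) as $v\Lambda^p=\{\mu\}$ matches the paper's definition.
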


\begin{proof}
Suppose that a  $\Lambda$ is an aperiodic $1$-graph  with no sources and that $\mu \in v \Lambda^p v$ for some $v \in \Lambda^0$.
Let $m=0$ and $n =p$. Since there is no local periodicity at $v$ and there are no sources, we may find $\lambda \in v \Lambda^{tp}$ for some $t > 1$ such that
\[
\lambda ( 0 , t(p-1) ) = \lambda ( 0 ,  tp - p ) \neq \lambda ( p , p + tp - p ) = \lambda ( p , tp )
\]

\noindent which means that $\mu^t$ cannot be the only path with range $v$ of length $tp$ and hence $\mu$ must have an entrance.

Suppose that every loop in $\Lambda$ has an entrance. Choose $v \in \Lambda^0$ and $m \lneq n \in \mathbb{N}$. If $v$ is not connected
to a loop then \eqref{eq:lp} holds for every $\lambda \in v \Lambda$. Hence without loss of generality we may assume that $v$ is
connected to a loop; moreover also that $v$ lies on the loop, $\mu \in v \Lambda^p v$. Suppose that $p=1$, then since the loop has an entrance $e \neq \mu (0,1)$ the path $\lambda = \mu^{n} e$ satisfies \eqref{eq:lp}, so we may assume $p>1$. We may also, without loss of generality,
assume that the vertices $\mu ( q , q )$ are distinct for all $0 \le q \le p-1$. If $n-m$ is not a multiple of $p$ then $\lambda = \mu^t$
will satisfy \eqref{eq:lp} where $tp > n$ since $\lambda (m,m) \neq \lambda (n,n)$. Finally, suppose that $n = m + tp$ where $t \ge 1$ and let $q$ be such that $qp \le n < (q+1)p$. Since $\mu$ has an exit there must be a path $\kappa \in v \Lambda^p$ such that $\kappa \neq \mu$. Then one
checks that $\lambda = \mu^q \kappa$ satisfies \eqref{eq:lp}.
\end{proof}

\begin{examples} \label{ex:aperiodex}
For $n \ge 2$ the $1$-graphs $B_n$ are aperiodic. Since it has no loops it follows that $\Omega_1$ is aperiodic.
\end{examples}

\begin{remark}
An aperiodicity condition for $k$-graphs was first introduced in \cite[Definition 4.3]{KP1} in terms of infinite paths: A row-finite $k$-graph $\Lambda$ satisfies condition (A) if for every vertex $v\in{\Lambda}^0$, there is an infinite path $x\in v{\Lambda}^{\infty}$ such that ${\sigma}^m(x)\ne {\sigma}^n(x)$ for all $m\ne n\in\mathbb{N}^k$. There are several other definitions for aperiodicity in the literature such as condition (B) of \cite[Theorem 4.3]{RSY1} and the condition described in \cite[Definition 1]{RoS1}. It is shown in \cite[Lemma 3.2]{RoS1} that all these definitions are equivalent. The benefit of Definition \ref{aperiodic} is that it is given in terms of $\Lambda$ directly which makes it a lot easier to work with.
\end{remark}

\begin{definition} \label{def:sh}
Let $\Lambda$ be a row-finite $k$-graph with no sources. Define a relation $\le$ on ${\Lambda}^0$ by $v\le w$ if and only if $v\Lambda w\ne\emptyset$.
\begin{enumerate}[(a)]
\item We say that a subset $H$ of ${\Lambda}^0$ is \textit{hereditary} if $v\in H$ and $v\le w$ imply that $w\in H$.
\item \label{sat} We say that a set $H \subseteq \Lambda^0$ is \textit{saturated} if
for all $v \in \Lambda^0$
\[
r^{-1}(v)\ne \emptyset\;\; \text{and}\;\; \{s(\lambda)\;:\;\lambda\in v{\Lambda}^{e_i}\}\subset H\;\;\text{for some}\;\;i\in\{1,\dots,k\} \;\;\Longrightarrow \;\;v\in H.
\]
\end{enumerate}
The \textit{saturation} of a set $H$ is the smallest saturated subset $\overline{H}$ of ${\Lambda}^0$ containing $H$.
\end{definition}

\begin{remark}
The notion of a saturated hereditary set was first given in \cite{Cu} and adapted for directed graphs in \cite[\S 6]{KPRR}. The definition of a saturated set for a $k$-graph was first introduced in \cite[\S 5]{RSY1}, but the relation $\le$ is defined differently. Our definition is adapted from the one given in \cite[Definition 3.1]{S} which was given in a more general setting.
\end{remark}

\noindent
The following technical lemma is used in the proof of our main Theorem (Theorem~\ref{MT}). We use it to identify when a vertex lies in the saturation of a hereditary set of vertices. Its proof is very similar to the one for a directed graph given in \cite[Lemma 6.2]{BPRS}. We include its proof as we need an additional argument because of new saturation condition (\ref{sat}) of Definition~\ref{def:sh} for a $k$-graph.

\begin{lemma}\label{lem2}
Suppose that $\Lambda$ is a row-finite k-graph with no sources and $v\in {\Lambda}^0$. If $y\in\overline{\{x\in {\Lambda}^0\;:\;v\le x\}}$, then there exists $z\in {\Lambda}^0$ such that $v\le z$ and $y\le z$.
\end{lemma}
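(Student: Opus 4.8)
The plan is to make the saturation explicit as a countable increasing union and then induct on the stage at which a vertex enters it. Write $W = \{ x \in \Lambda^0 : v \le x \}$, so that $v \in W$ and, crucially, $v \le z$ holds precisely when $z \in W$. Hence the conclusion of the lemma is equivalent to finding $z \in W$ with $y \le z$, and it suffices to prove this for every $y \in \overline{W}$.

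First I would give an inductive description of $\overline{W}$. Set $H_0 = W$ and, for $n \ge 0$,
\[
H_{n+1} = H_n \cup \{ u \in \Lambda^0 : \{ s(\lambda) : \lambda \in u\Lambda^{e_i} \} \subseteq H_n \text{ for some } i \in \{1,\dots,k\} \},
\]
where the requirement $r^{-1}(u) \neq \emptyset$ in Definition~\ref{def:sh}(\ref{sat}) is automatic since $\Lambda$ has no sources. I claim $\overline{W} = \bigcup_{n \ge 0} H_n$. That this union is saturated is where row-finiteness enters: if $\{ s(\lambda) : \lambda \in u\Lambda^{e_i} \} \subseteq \bigcup_n H_n$ for some $i$, then, as $u\Lambda^{e_i}$ is finite, each source lies in some $H_{n_\lambda}$, so all of them lie in $H_N$ for $N = \max_\lambda n_\lambda$, whence $u \in H_{N+1}$. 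Minimality is a routine induction: any saturated set containing $W$ contains every $H_n$. Therefore $\bigcup_n H_n$ is the smallest saturated set containing $W$, i.e.\ $\overline{W}$.

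Now I would prove the statement by induction on $n$, showing that every $y \in H_n$ admits $z \in W$ with $y \le z$. For $n = 0$ we have $y \in W$, so $z = y$ works since $y \le y$. For the inductive step take $y \in H_{n+1} \setminus H_n$ and fix $i$ with $\{ s(\lambda) : \lambda \in y\Lambda^{e_i} \} \subseteq H_n$. Because $\Lambda$ has no sources, $y\Lambda^{e_i} \neq \emptyset$; choose any $\lambda$ in it, so that $s(\lambda) \in H_n$. By the inductive hypothesis there is $z \in W$ with $s(\lambda) \le z$. Since $\lambda \in y\Lambda s(\lambda)$ we have $y \le s(\lambda)$, and composing, $y \le s(\lambda) \le z$ gives $y \le z$, with $z \in W$ as required.

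I expect the main subtlety --- the ``additional argument'' alluded to before the lemma --- to be exactly the handling of the disjunctive saturation condition~(\ref{sat}): unlike the single-direction condition for a $1$-graph, here we are only guaranteed that all sources in one direction $e_i$ lie in $H_n$. The point that makes the induction go through is that a single edge in that one direction already produces a downstream step $y \le s(\lambda)$, so there is no need to reconcile the different coordinate directions; choosing any one edge in the witnessing direction suffices.
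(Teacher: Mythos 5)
Your proof is correct. The engine of the argument --- choosing a single edge $\lambda \in y\Lambda^{e_i}$ in the witnessing direction to get $y \le s(\lambda)$, then composing downstream --- is exactly the same step the paper uses, but your scaffolding around it is genuinely different. The paper never constructs the saturation explicitly: writing $L_v = \{x \in \Lambda^0 : v \le x\}$, it shows that inside an arbitrary saturated set $K \supseteq L_v$ the subset $K_1 = \{w \in K : w \le x \text{ for some } x \in L_v\}$ is itself saturated and contains $L_v$, so by minimality $\overline{L_v} \subseteq K_1$, which is precisely the conclusion of the lemma. You instead realize $\overline{W}$ as the increasing union $\bigcup_{n\ge 0} H_n$ and induct on the stage at which $y$ enters; this is sound, but it needs row-finiteness (to see that the countable union is already saturated --- without it one would need transfinite stages) and an auxiliary structural claim that the paper's argument avoids entirely. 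In exchange, your route yields a reusable explicit description of the saturation and makes the logical structure of the lemma very transparent, while the paper's minimality argument is shorter and, as a bonus, does not actually use row-finiteness at all.
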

\begin{proof}
Let $L_v:=\{x\in {\Lambda}^0:v\le x\}$. First note that $L_v$ is hereditary since if $y\in L_v$ and $y\le z$ then $v\le y$, which implies that $v\le z$. Thus, $z\in L_v$. So its saturation is by definition the smallest saturated set containing $L_v$. Now suppose that $K$ is any saturated set containing $L_v$. Then $K_1:=\{w\in K : w\le x\;\;\text{for some}\;\;x\in L_v\}$ contains $L_v$. To show that $K_1$ is saturated, we suppose that $z\in {\Lambda}^0$ and $\{s(\lambda)\;:\;\lambda\in z{\Lambda}^{e_i}\}\subset K_1$ for some $i\in\{1,\dots,k\}$. Then $K_1\subset K$ by definition, so $z\in K$ since $K$ is saturated. We know that there exists $\lambda$ such that $r(\lambda)=z$ and $s(\lambda)\in K_1$, so $s(\lambda)\le x$ for some $x\in L_v$. Also, $z=r(\lambda)\le s(\lambda)$ implies that $z\le x$ for some $x\in L_v$. Hence, $z\in K_1$ and $K_1$ is saturated. Thus, if $K$ is the smallest saturated set containing $L_v$, then $K=\{w\in K : w\le x\;\;\text{for some}\;\;x\in L_v\}$.

If $y\in L_v$, then $v\le y$. So any $z\in {\Lambda}^0$ satisfying $y\le z$ will give the desired property. If $y\in\overline{L_v}\setminus L_v$, then $\{s(\lambda): \lambda\in y{\Lambda}^{e_i}\}\subset L_v$ for some $i\in\{1,\dots, k\}$. So there exists $z\in L_v$, i.e. $v\le z$ such that $y\le z$, which proves the statement.
\end{proof}

\begin{remark}
Let $C^{\ast}(\Lambda)$ be the $C^{\ast}$-algebra for a row-finite $k$-graph $\Lambda$ with no sources. For an ideal $I$ in $C^{\ast}(\Lambda)$, we let $H_I=\{v\in{\Lambda}^0\;:\;t_v\in I\}$. Also, for each subset $H$ of ${\Lambda}^0$, let $I_H$ denote the ideal in $C^{\ast}(\Lambda)$ generated by $\{t_v:v\in H\}$.
Then, following Lemma 4.3 in \cite{BPRS}, it can be shown that for a saturated hereditary subset $H$ of $\Lambda$,
\[I_H=\overline{\text{span}}\{t_{\alpha}t^{\ast}_{\beta}\;:\;\alpha,\beta\in\Lambda\;\;\text{and}\;\;s(\alpha)=s(\beta)\in H\}.\]
In particular, $I_H$ is gauge invariant in the sense that $\gamma_z(a)=a$ for all $a\in I_H$ and $z\in\mathbb{T}^k$. Moreover, every gauge-invariant ideal is of this form.
\end{remark}

\noindent
The following Theorem gives a complete description of the gauge invariant ideals of $C^* ( \Lambda )$.

\begin{theorem}\cite[Theorem 5.5]{S}, \cite[Theorem 5.2]{RSY1}. \label{ff}
Let $\Lambda$ be a row-finite $k$-graph with no sources. Let $H$ be a subset of ${\Lambda}^0$ and $I$ be an ideal in $C^{\ast}(\Lambda)$.
Let $I_H$ and $H_I$ be as defined above.
\begin{enumerate}[(a)]
\item If I is non-zero gauge invariant ideal of $C^{\ast}(\Lambda)$, then $H_I$ is non-empty saturated hereditary subset and $I=I_{H_I}$.
\item If H is a saturated and hereditary subset of ${\Lambda}^0$ and $I_H$ is the associated ideal, then $H=H_{I_H}$.
\item The map $H\mapsto I_H$ is an isomorphism of the lattice of saturated hereditary subsets of ${\Lambda}^0$ onto the lattice of closed gauge-invariant ideals of $C^{\ast}(\Lambda)$.
\item Suppose $H \neq \Lambda^0$ is saturated and hereditary. Then
\[\Gamma(\Lambda\setminus H):=({\Lambda}^0\setminus H,\{\lambda\in\Lambda:s(\lambda)\notin H\},r,s)\]
is a row-finite $k$-graph with no sources, and $C^{\ast}(\Lambda)/ I_H$ is canonically isomorphic to $C^{\ast}(\Gamma(\Lambda\setminus H))$.
\end{enumerate}
\end{theorem}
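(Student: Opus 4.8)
The plan is to run the standard gauge-invariant ideal machinery, whose two engines are the explicit spanning description of $I_H$ recorded in the Remark preceding the statement, namely $I_H=\overline{\mathrm{span}}\{t_\alpha t_\beta^*:s(\alpha)=s(\beta)\in H\}$, and the gauge-invariant uniqueness theorem for $k$-graphs from \cite{KP1}. I would dispatch the algebraic content of (a) first. That $H_I$ is hereditary is immediate: if $t_v\in I$ and $\lambda\in v\Lambda w$, then $t_\lambda=t_v t_\lambda\in I$, so $t_w=t_\lambda^* t_\lambda\in I$. That $H_I$ is saturated is Cuntz--Krieger relation (d): if $r^{-1}(v)\neq\emptyset$ and every source in $v\Lambda^{e_i}$ lies in $H_I$, then $t_v=\sum_{\lambda\in v\Lambda^{e_i}}t_\lambda t_{s(\lambda)}t_\lambda^*\in I$, whence $v\in H_I$.

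For the nonemptiness of $H_I$ when $I\neq 0$ is gauge invariant, I would use the canonical faithful conditional expectation $\Phi$ of $C^*(\Lambda)$ onto the fixed-point algebra $C^*(\Lambda)^\gamma$ given by averaging over $\mathbb{T}^k$; gauge invariance of $I$ gives $\Phi(I)\subseteq I$, and faithfulness forces $I\cap C^*(\Lambda)^\gamma\neq 0$. Since the core is AF --- an increasing union of finite-dimensional algebras spanned by the $t_\alpha t_\beta^*$ with $d(\alpha)=d(\beta)$ --- the ideal $I$ meets it in an ideal containing a nonzero projection, and a matrix-unit computation pushes this down to a vertex projection $t_v\in I$, so $v\in H_I$. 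The equality $I=I_{H_I}$ then follows from (d) together with the gauge-invariant uniqueness theorem: $I_{H_I}\subseteq I$ by construction, and the image of $I$ in $C^*(\Lambda)/I_{H_I}\cong C^*(\Gamma(\Lambda\setminus H_I))$ is a gauge-invariant ideal annihilating every surviving vertex projection, hence zero.

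Parts (b) and (d) I would prove together. First I would check that $\Gamma(\Lambda\setminus H)$ is a row-finite $k$-graph with no sources: heredity of $H$ makes the edge set closed under the factorization property, since a factor $\mu$ of a path with source outside $H$ again has $s(\mu)=r(\nu)\le s(\nu)\notin H$, so $s(\mu)\notin H$; row-finiteness is inherited; and the no-sources property is precisely the contrapositive of saturation, since $v\notin H$ guarantees, for every $i$, some $\lambda\in v\Lambda^{e_i}$ with $s(\lambda)\notin H$. To obtain the isomorphism, observe that the images $\{t_\lambda+I_H:s(\lambda)\notin H\}$ form a Cuntz--Krieger $\Gamma(\Lambda\setminus H)$-family in the quotient, where relation (d) survives because the summands with $s(\lambda)\in H$ already lie in $I_H$. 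The universal property then yields a surjection $C^*(\Gamma(\Lambda\setminus H))\to C^*(\Lambda)/I_H$, and the gauge-invariant uniqueness theorem applied to $\Gamma(\Lambda\setminus H)$ (whose generating projections are nonzero by Remark~\ref{rem:sep}) upgrades it to an isomorphism. Reading this off gives $H=H_{I_H}$: clearly $H\subseteq H_{I_H}$, and if $w\notin H$ then $t_w+I_H$ is the nonzero generator at $w$ in $C^*(\Gamma(\Lambda\setminus H))$, so $t_w\notin I_H$.

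Finally (c) is formal: (a) and (b) show $H\mapsto I_H$ is a bijection between saturated hereditary subsets and closed gauge-invariant ideals, and since $H_1\subseteq H_2$ plainly gives $I_{H_1}\subseteq I_{H_2}$ while the inverse $I\mapsto H_I$ is also inclusion preserving, the bijection is an order isomorphism and hence a lattice isomorphism. I expect the genuine obstacle to be the descent step in (a): extracting an honest vertex projection from an arbitrary nonzero projection in the AF core, which requires controlling the matrix-unit structure of the finite-dimensional building blocks and invoking saturation to guarantee that the resulting hereditary set of vertices is nonempty.
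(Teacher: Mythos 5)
First, a point of comparison: the paper does not actually prove this theorem --- its ``proof'' consists of citing \cite[Theorem 5.5]{S} for (a),(b) and \cite[Theorem 5.2]{RSY1} for (c),(d) --- so your proposal is a from-scratch reconstruction of the standard machinery rather than a parallel to anything in the text. Most of that reconstruction is sound: the hereditary and saturated computations for $H_I$, the use of the faithful conditional expectation onto the core to locate a vertex projection in a nonzero gauge-invariant ideal, the verification that $\Gamma(\Lambda\setminus H)$ is a row-finite $k$-graph with no sources (heredity giving closure under factorization, saturation giving no sources), the Cuntz--Krieger family in the quotient, and the formal lattice argument for (c).

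The genuine gap is in your use of the gauge-invariant uniqueness theorem to upgrade the surjection $C^{*}(\Gamma(\Lambda\setminus H))\to C^{*}(\Lambda)/I_H$ to an isomorphism. That theorem (e.g.\ \cite[Theorem 3.4]{KP1}) requires that the Cuntz--Krieger family \emph{in the codomain} have nonzero vertex projections, i.e.\ that $t_w+I_H\neq 0$ for every $w\notin H$; the nonvanishing you invoke via Remark~\ref{rem:sep} concerns the universal generators in the \emph{domain} $C^{*}(\Gamma(\Lambda\setminus H))$, which is not the relevant hypothesis. Worse, the statement $t_w\notin I_H$ for $w\notin H$ is precisely part (b), which you then ``read off'' from the isomorphism --- so as written the argument for (b) and (d) is circular, and since your proof of $I=I_{H_I}$ in (a) routes through (d), it inherits the gap. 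The standard repair: let $\{s_\lambda\}$ be the universal family for $\Gamma(\Lambda\setminus H)$ and set $q_\lambda=s_\lambda$ when $s(\lambda)\notin H$ and $q_\lambda=0$ when $s(\lambda)\in H$. Heredity of $H$ makes $\{q_\lambda\}$ a Cuntz--Krieger $\Lambda$-family: for $v\in H$ both sides of relation (d) vanish, and for $v\notin H$ relation (d) collapses to the corresponding relation in $\Gamma(\Lambda\setminus H)$. The induced homomorphism $C^{*}(\Lambda)\to C^{*}(\Gamma(\Lambda\setminus H))$ kills every $t_v$ with $v\in H$, hence kills $I_H$, but sends $t_w$ to $s_w\neq 0$ for $w\notin H$. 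This proves (b) first; with (b) in hand your application of the gauge-invariant uniqueness theorem becomes legitimate, and the remainder of your outline, including (a) and (c), goes through.
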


\begin{proof}
The proofs for the generalized case of (a) and (b) are given in \cite[Theorem 5.5]{S}. Also, (c) and (d) are shown in \cite[Theorem 5.2]{RSY1} for locally convex row-finite $k$-graphs, so they are certainly true for row-finite $k$-graphs with no sources.
\end{proof}

\begin{remark} \label{rem:trivcase}
Note that $\Lambda^0 , \emptyset$ are always saturated hereditary subsets of $\Lambda^0$, corresponding to the trivial ideals  $C^* ( \Lambda ) $, $\{ 0 \}$  of $C^* ( \Lambda )$ respectively. This observation allows us to complete the analysis of Theorem~\ref{ff} (d) for the case $H = \Lambda^0$.
\end{remark}

\section{The Primitive Ideal Space of $C^* ( \Lambda )$}

In this section, we describe and completely characterize the primitive ideal space of the $C^*$-algebra of a $k$-graph $\Lambda$ all of whose ideals are gauge invariant. Since $C^* ( \Lambda )$ is separable (see Remark~\ref{rem:sep}) it follows from \cite[Proposition 3.13.10 and Proposition 4.3.6]{Pe} that every primitive ideal is prime and vice versa. We shall therefore use the terms primitive and prime interchangeably.

We begin in Proposition~\ref{GI} where we give a condition on $\Lambda$ which ensures that all ideals are gauge invariant. Then, in Proposition~\ref{prop:sap} we show how it may be possible to check this condition.
Next in Theorem~\ref{MT} we give necessary and sufficient conditions on a gauge invariant ideal to be prime when all ideals are gauge invariant. Finally, in Theorem~\ref{topMT} we describe the topology of $\operatorname{Prim} C^* ( \Lambda )$.

\begin{definition} \label{def:sap}
Let $\Lambda$ be a row-finite $k$-graph with no sources. We say that $\Lambda$ is \textit{strongly aperiodic} if $\Gamma(\Lambda\setminus H)$ satisfies aperiodicity condition for all saturated hereditary subsets $H \subsetneq \Lambda^0$.
\end{definition}

\noindent Note that by taking $H= \emptyset$ we see that if a row-finite $k$-graph $\Lambda$ with no sources is strongly aperiodic, then it is automatically aperiodic.

A loop $\mu$ in a $1$-graph $\Lambda$ is said to be \textit{simple} if the vertices $\{ \mu ( i , i ) : 0 \le i \le d( \mu)-1 \}$ are distinct.
In \cite[\S 6]{KPRR}, a directed graph is said to satisfy condition (K) if every vertex is the range of either two distinct simple loops or none; that is for all $v \in \Lambda^0$ either $v \Lambda v = \emptyset$ or there are simple loops $\mu, \nu \in v \Lambda v$ such that $\mu \neq \nu$. When considering $1$-graphs, the analogue of condition (K) is the same. Parts of the proof of the following result can be found in \cite{BHRS} for directed graphs (see also \cite[Remark 6.11]{KPRR}). However, the result has not been stated in this form,  so we give the proof for completeness and to complement the proof of Lemma~\ref{lem:L}.

\begin{lemma} \label{lem:KisSA}
For row-finite $1$-graphs with no sources, strong aperiodicity is equivalent to condition (K).
\end{lemma}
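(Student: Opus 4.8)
The plan is to prove the equivalence in both directions, using the characterization from Theorem~\ref{ff} that quotients $C^*(\Lambda)/I_H \cong C^*(\Gamma(\Lambda \setminus H))$ correspond to the restricted graphs $\Gamma(\Lambda \setminus H)$, together with Lemma~\ref{lem:L}, which tells us that aperiodicity of a row-finite $1$-graph with no sources is equivalent to condition (L). Thus strong aperiodicity of $\Lambda$ is, by Definition~\ref{def:sap} and Lemma~\ref{lem:L}, equivalent to the statement that $\Gamma(\Lambda \setminus H)$ satisfies condition (L) for every saturated hereditary $H \subsetneq \Lambda^0$. The task therefore reduces to showing that this family of condition-(L) requirements is equivalent to condition (K) for $\Lambda$ itself. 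The main idea is that a failure of condition (K) at a vertex $v$ produces a saturated hereditary set $H$ such that the restricted graph $\Gamma(\Lambda \setminus H)$ contains a loop with no exit, violating condition (L); conversely condition (K) propagates down to every restricted graph.

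For the direction ``(K) implies strong aperiodicity'' I would argue as follows. Fix a saturated hereditary $H \subsetneq \Lambda^0$ and suppose, for contradiction, that $\Gamma(\Lambda \setminus H)$ fails condition (L), so there is a loop $\mu$ in $\Gamma(\Lambda \setminus H)$ with no exit; here $\mu$ is a loop in $\Lambda$ all of whose vertices lie in $\Lambda^0 \setminus H$. Let $v = r(\mu)$. By condition (K) applied to $v$ in $\Lambda$, since $v\Lambda v \neq \emptyset$ there are two distinct simple loops based at $v$, hence $\mu$ must have an exit $e \in v\Lambda$ inside $\Lambda$. The key point is then that, because $H$ is hereditary and $\mu$ stays outside $H$, the range vertex $v \notin H$, and one shows the exit edge $e$ (or a suitable edge on an alternate simple loop) also has source outside $H$ --- otherwise saturation of $H$ would force $v \in H$, contradicting $v \in \Lambda^0 \setminus H$. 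This exit survives into $\Gamma(\Lambda \setminus H)$, contradicting that $\mu$ has no exit there. Hence every $\Gamma(\Lambda \setminus H)$ satisfies condition (L), i.e. $\Lambda$ is strongly aperiodic.

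For the converse ``strong aperiodicity implies (K)'', I would prove the contrapositive. Suppose condition (K) fails at some $v \in \Lambda^0$; then $v\Lambda v \neq \emptyset$ but $v$ is the base of exactly one simple loop $\mu$, so that every return path at $v$ is a power of $\mu$. I would construct a saturated hereditary set $H \subsetneq \Lambda^0$ for which the image of $\mu$ in $\Gamma(\Lambda \setminus H)$ is a loop with no exit, thereby violating condition (L) for that restricted graph and contradicting strong aperiodicity. The natural candidate is to take the set of vertices one can reach after ``leaving'' the unique loop $\mu$, saturate it, and verify that $v$ and the loop vertices survive outside $H$ while every edge departing the loop lands in $H$; this makes $\mu$ an exit-free loop downstairs.

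The main obstacle I expect is the careful bookkeeping with the $k=1$ saturation condition of Definition~\ref{def:sh}(\ref{sat}) in the converse direction: one must exhibit an \emph{explicit} saturated hereditary $H$ killing every exit from the loop $\mu$ yet keeping the loop vertices out of $H$, and check that saturation does not inadvertently pull a loop vertex into $H$. In the directed-graph literature (\cite{BHRS}, \cite[Remark 6.11]{KPRR}) this is exactly the delicate step, and getting the saturation argument right --- ensuring $v \notin \overline{H}$ while all genuine exits are absorbed --- is where the real work lies. The forward direction and the reduction via Lemma~\ref{lem:L} are comparatively routine.
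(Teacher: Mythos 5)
Your high-level route is the same as the paper's: use Lemma~\ref{lem:L} to replace ``aperiodic'' by condition (L) in every quotient $\Gamma(\Lambda\setminus H)$, then in one direction use (K) to resurrect a second loop inside a quotient, and in the other direction build a saturated hereditary set that strands an entrance-free loop. The gaps are exactly at the two places where the real work happens. First, in ``(K) implies strong aperiodicity'' your survival argument is wrong: you keep the exit alive by claiming that otherwise ``saturation of $H$ would force $v\in H$.'' Saturation (Definition~\ref{def:sh}(\ref{sat})) puts $v$ into $H$ only when the sources of \emph{all} edges of $v\Lambda^{e_1}$ lie in $H$; since the edge of $\mu$ at $v$ has source outside $H$, saturation can never fire at $v$, so nothing prevents an individual exit edge from having source in $H$ and vanishing in the quotient. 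What actually works --- and is what the paper does --- is heredity: condition (K) supplies a second simple loop $\nu\in v\Lambda^q v$ with $\nu\neq\mu$; every vertex $w$ on $\nu$ satisfies $w\le v$, so $w\in H$ would force $v\in H$, whence all of $\nu$ survives in $\Gamma(\Lambda\setminus H)$; one then compares $\nu$ against $\mu$ (cases $q<p$, $q>p$, $q=p$, using simplicity of both) to contradict the uniqueness of paths into $v$ in the quotient. Note also that two distinct simple loops need not diverge at $v$ itself --- they may share their first edge --- so your edge-level claim that $\mu$ has an exit $e\in v\Lambda$ is not available either.

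Second, in ``strong aperiodicity implies (K)'' you have only named the construction, and as described it points the arrows the wrong way for this paper's conventions. Here condition (L) concerns \emph{entrances} (paths $\kappa\in v\Lambda^p$, i.e.\ with range $v$), the quotient $\Gamma(\Lambda\setminus H)$ retains exactly the morphisms whose \emph{source} avoids $H$, and hereditary sets are closed under passing from a vertex to sources of paths into it. So to make $\mu$ entrance-free downstairs one must absorb into $H$ the sources of all paths into $v$ other than $\mu$: the paper's $X=\{s(\nu): \nu\in v\Lambda^q,\ \nu\neq\mu\}$ and its $\le$-closure $L_X$. Your candidate, ``the vertices one can reach after leaving the loop,'' is not even hereditary in the sense of Definition~\ref{def:sh}, and an \emph{exit}-free loop in a quotient does not violate condition (L) as defined here, so no contradiction with strong aperiodicity would follow. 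Moreover the step you explicitly defer is precisely the paper's crux: one must verify $v\notin\overline{L_X}$, which the paper obtains by noting that $x\Lambda v\neq\emptyset$ for some $x\in L_X$ would make $v$ the base of two distinct loops (contradicting the failure of (K)), so $v$ does not connect to $L_X$ and saturation never absorbs it. Finally, your assertion that failure of (K) forces every return path at $v$ to be a power of $\mu$ is valid for the first-return notion of simple loop (as in \cite{KPRR}) but false for this paper's ``all vertices distinct'' definition (a return path may repeat vertices other than $v$), so it cannot be used silently.
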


\begin{proof}
Let $\Lambda$ be a strongly aperiodic $1$-graph. Suppose, for contradiction, that $\Lambda$ does not satisfy condition (K). Then there is a vertex $v\in\Lambda^0$ which has only one simple loop based at $v\in\Lambda^0$. Let $\mu\in v\Lambda^p v$ for some $p>0$. Let $X=\{s(\nu) \mid \nu\ne\mu, \nu\in v\Lambda^q\}$. It is clear that $v\notin X$ since $v$ only has one simple loop $\mu$. Let $L_X=\{u\in\Lambda^0 \mid x\le u\;\;\text{for some}\;\;x\in X\}$, then $L_X$ is hereditary. If $x\Lambda v\ne\emptyset$ for some $x\in L_X$, then $v$ is the range of two distinct loops and so $\Lambda$ satisfies condition (K), so $x\Lambda v=\emptyset$ for all $x\in L_X$. Since $v$ does not connect to any vertex in $L_X$, it follows that $v\notin\overline{L_X}$. Moreover, $\Gamma(\Lambda\setminus\overline{L_X})$ contains $v$ and $\mu$. But $v\Gamma(\Lambda\setminus\overline{L_X})^q=\{\mu\}$ by construction. Hence by Lemma \ref{lem:L}, the $1$-graph $\Gamma(\Lambda\setminus\overline{L_X})$ does not satisfy condition (L) and so is not aperiodic. This contradicts $\Lambda$ being strongly aperiodic. Thus strong aperiodicity implies condition (K).

Let $\Lambda$ be a $1$-graph which satisfies condition (K). Suppose for contradiction, that $\Lambda$ is not strongly aperiodic. Then there is a saturated
hereditary subset $H \subsetneq \Lambda^0$ such that $\Gamma ( \Lambda \backslash H  )$ is not aperiodic. By Lemma~\ref{lem:L} there is $v \in \Gamma ( \Lambda \backslash H )^0$ and $\mu \in v \Gamma ( \Lambda \backslash H  )^p v$ such that $| v \Gamma ( \Lambda \backslash H  )^q | = 1$ for all $q \ge 1$. Replacing $\mu$ by a sub-path if necessary we may, without loss of generality, assume that $\mu$ is a simple loop. Since $\Lambda$ itself satisfies condition (K) there is a simple loop $\nu \in v \Lambda^q v$ for some $q \ge 1$ such that $\mu \neq \nu$. By definition of $\Gamma ( \Lambda \backslash H )$ it follows that $\nu \in \Gamma( \Lambda \backslash H )$ since $v = s ( \nu ) \not\in H$. Suppose that $q < p$ then since $| \Gamma ( \Lambda \backslash H )^q | = 1$ it follows that $\mu ( 0 , q ) = \nu ( 0 ,q )$. Since $r ( \nu ) = \mu ( 0 , q ) = v$ it follows that $\mu$ is not simple and a contradiction. We obtain a similar contradiction if $p > q$. If $p=q$ then since  $| v \Gamma ( \Lambda \backslash H  )^q | = 1$ it follows that $\mu = \nu$ which contradicts the assumption that $\mu\neq \nu$. Hence in all cases, the assumption must have been false, and so $\Lambda$ is strongly aperiodic.
\end{proof}

\begin{proposition}\label{GI}
Let $\Lambda$ be a row-finite strongly aperiodic $k$-graph with no sources, then all ideals of $C^* ( \Lambda )$ are gauge invariant.
\end{proposition}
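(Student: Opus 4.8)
The plan is to reduce everything to the Cuntz--Krieger uniqueness theorem applied to a suitable quotient graph. Let $I$ be an arbitrary ideal of $C^*(\Lambda)$ and put $H = H_I = \{v\in\Lambda^0 : t_v\in I\}$. First I would verify that $H$ is saturated and hereditary for an \emph{arbitrary} ideal, not merely a gauge invariant one. Heredity follows because if $t_v\in I$ and $v\le w$ then any $\lambda\in v\Lambda w$ satisfies $t_\lambda = t_v t_\lambda\in I$, so $t_w = t^*_\lambda t_\lambda\in I$. Saturation follows from relation (d): if $r^{-1}(v)\ne\emptyset$ and $\{s(\lambda):\lambda\in v\Lambda^{e_i}\}\subset H$, then $t_v = \sum_{\lambda\in v\Lambda^{e_i}} t_\lambda t^*_\lambda = \sum_{\lambda\in v\Lambda^{e_i}} t_\lambda t_{s(\lambda)} t^*_\lambda\in I$ since each $t_{s(\lambda)}\in I$. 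In particular, as $I$ contains the generators $\{t_v:v\in H\}$ of $I_H$, we have $I_H\subseteq I$.

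Next I would dispose of the boundary case $H=\Lambda^0$: then every $t_v\in I$, so $t_\lambda = t_{r(\lambda)}t_\lambda\in I$ for all $\lambda$ and $I = C^*(\Lambda)$, which is gauge invariant (compare Remark~\ref{rem:trivcase}). So assume $H\subsetneq\Lambda^0$. By Theorem~\ref{ff}(d) the quotient map $\pi:C^*(\Lambda)\to C^*(\Lambda)/I_H$ identifies the quotient with $C^*(\Gamma(\Lambda\setminus H))$, and since $\Lambda$ is strongly aperiodic, $\Gamma(\Lambda\setminus H)$ satisfies the aperiodicity condition by Definition~\ref{def:sap}. Because $I_H\subseteq I$, the image $\pi(I)=I/I_H$ is a genuine ideal of $C^*(\Gamma(\Lambda\setminus H))$.

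The crucial observation is that $\pi(I)$ contains \emph{no} vertex projection. Indeed, for $v\in\Lambda^0\setminus H$ the element $\pi(t_v)$ is exactly the vertex projection of $v$ in $C^*(\Gamma(\Lambda\setminus H))$, and $\pi(t_v)\in\pi(I)$ would force $t_v\in I + I_H = I$, i.e.\ $v\in H_I = H$, contradicting $v\notin H$. Consequently, in the further quotient $q:C^*(\Gamma(\Lambda\setminus H))\to C^*(\Gamma(\Lambda\setminus H))/\pi(I)$ every vertex projection is sent to a nonzero element. The images $\{q(\pi(t_v))\}$ form a Cuntz--Krieger $\Gamma(\Lambda\setminus H)$-family all of whose vertex projections are nonzero, so by the Cuntz--Krieger uniqueness theorem for aperiodic $k$-graphs \cite{KP1} the homomorphism $q$ is faithful. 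Hence $\pi(I)=\ker q = 0$, giving $I = I_H$, which is gauge invariant.

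The main obstacle is really locating the correct uniqueness input rather than the bookkeeping: once one sees that the problem collapses onto the Cuntz--Krieger uniqueness theorem, the remaining work is to confirm that strong aperiodicity supplies the aperiodicity condition on \emph{every} relevant quotient $\Gamma(\Lambda\setminus H)$ (this is precisely why Definition~\ref{def:sap} quantifies over all saturated hereditary $H\subsetneq\Lambda^0$), and to check that $H_I$ is saturated hereditary for a general ideal so that the correspondence $\pi(I)=I/I_H$ is legitimate. The boundary case $H=\Lambda^0$ must be handled separately, which is where the trivial saturated hereditary sets of Remark~\ref{rem:trivcase} come in.
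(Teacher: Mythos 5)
Your proof is correct, but it takes a genuinely different route from the paper's. The paper disposes of Proposition~\ref{GI} in two lines: it asserts that strong aperiodicity lets one verify condition (D) of \cite[Definition 7.1]{S} and then quotes \cite[Theorem 7.2]{S}, a result proved there in the much more general finitely aligned setting. You instead give a self-contained argument: for an arbitrary ideal $I$ you check directly that $H_I$ is hereditary and saturated (so $I_{H_I}\subseteq I$), dispose of the boundary case $H_I=\Lambda^0$, identify $C^*(\Lambda)/I_{H_I}$ with $C^*(\Gamma(\Lambda\setminus H_I))$ via Theorem~\ref{ff}(d), observe that $\pi(I)$ contains no vertex projection of the quotient graph algebra, and then invoke the Cuntz--Krieger uniqueness theorem of \cite{KP1} --- applicable precisely because Definition~\ref{def:sap} makes $\Gamma(\Lambda\setminus H_I)$ aperiodic --- to conclude $\pi(I)=0$, i.e.\ $I=I_{H_I}$ is gauge invariant. (The one implicit step is that the no-local-periodicity condition used here is equivalent to condition (A) of \cite{KP1}, which the paper records via \cite{RoS1}.) What your route buys: it stays entirely inside the toolkit already assembled in the paper (Theorem~\ref{ff}(d) from \cite{RSY1}, gauge invariance of $I_H$, and \cite{KP1} uniqueness), avoids the finitely-aligned machinery of \cite{S}, and makes explicit the mechanism that the paper's citation hides, including the verification the paper leaves as ``one can check.'' What the paper's route buys is brevity and an appeal to a theorem stated in greater generality; under the hood both arguments rest on the same phenomenon, namely that aperiodicity of every quotient graph feeds a uniqueness theorem forcing each ideal to equal the gauge-invariant ideal generated by its vertex projections.
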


\begin{proof}
If $\Lambda$ is strongly aperiodic, then one can check that $\Lambda$ satisfies condition (D) of \cite[Definition 7.1]{S}. The result follows from \cite[Theorem 7.2]{S}.
\end{proof}

\noindent We now look for conditions on a $2$-graph which ensure that it is strongly aperiodic. First we need a definition.

\begin{definition} \label{def:aq}
Let $\Lambda$ be a $2$-graph and $u \in \Lambda^0$, suppose that there are distinct $\alpha_i \in u \Lambda^{a e_1} u$ and distinct
$\beta_i \in u \Lambda^{b e_2} u$ for $i=1,2$, where $a , b \in \mathbb{Z}^+$  such that
\[
\beta_2 \alpha_1 = \alpha_1 \beta_2 , \ \beta_2 \alpha_2 = \alpha_2 \beta_2 , \ \beta_1 \alpha_1 = \alpha_2 \beta_1 , \ \beta_1 \alpha_2 = \alpha_1 \beta_1 .
\]

\noindent Then $(\alpha_1,\alpha_2,\beta_1,\beta_2)$ is called an $(a,b)$-\emph{aperiodic quartet} at $u$.
\end{definition}

\begin{theorem} \label{thm:aq}
Let $\Lambda$ be a row-finite $2$-graph with no sources and $(\alpha_1,\alpha_2,\beta_1,\beta_2)$ be an $(1,1)$-aperiodic quartet at $u \in \Lambda^0$, then there is no local periodicity at $u$.
\end{theorem}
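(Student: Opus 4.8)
The plan is to use the four relations of the quartet to show that every path assembled from $\alpha_1,\alpha_2,\beta_1,\beta_2$ is rigidly determined by two sequences of bits, thereby reducing \eqref{eq:lp} to an explicit choice. First I would encode a path $\lambda\in u\Lambda^{(P,Q)}$ built from the quartet as a labelling of the unit squares of the grid $[0,P]\times[0,Q]$: the horizontal edge from $(i,j)$ to $(i+1,j)$ is some $\alpha_{a(i,j)}$ and the vertical edge from $(i,j)$ to $(i,j+1)$ is some $\beta_{b(i,j)}$. Assigning bits $\hat\alpha_1=\hat\beta_2=0$ and $\hat\alpha_2=\hat\beta_1=1$, let $A(i,j),B(i,j)\in\{0,1\}$ denote the bits of the horizontal and vertical edge at $(i,j)$.

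The crux is the propagation step. Each unit square is an instance of $\beta_b\alpha_a=\alpha_{a'}\beta_{b'}$, and reading off the four relations gives $b'=b$ together with $\hat a'=\hat a+\hat b\pmod 2$. Geometrically, taking $B(i,j)$ as the left edge and $A(i,j+1)$ as the top edge of the square at corner $(i,j)$, the right edge carries the same label as the left edge, and the bottom satisfies $A(i,j)=A(i,j+1)+B(i,j)\pmod 2$. Consequently (i) $B(i,j)=V(j)$ is independent of $i$, so the vertical labels are constant along each row, and (ii) $A(i,j)=H(i)+C(j)\pmod 2$, where $H(i)=A(i,0)$, $V(j)=B(0,j)$ and $C(j)=\sum_{l<j}V(l)\pmod 2$. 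Conversely, the bottom row $H$ and the left column $V$ may be prescribed arbitrarily and then determine a unique such path: here $(\text{bottom},\text{left})$ determines a square (which need not hold in a general $2$-graph), so one simply fills the grid in increasing coordinates. This identification of quartet-paths with the pair $(H,V)$ is the heart of the argument.

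With this in hand, given $m\neq n$ I would take $\lambda$ of degree $d(\lambda)=(m\vee n)+(1,1)\ge m\vee n$, so that the two segments compared in \eqref{eq:lp} are exactly the unit squares $\lambda(m,m+(1,1))$ and $\lambda(n,n+(1,1))$ sitting at grid corners $m$ and $n$. Since two unit squares can be equal only when their bottom edges $\lambda(\,\cdot\,,\cdot+e_1)$ and their left edges $\lambda(\,\cdot\,,\cdot+e_2)$ agree, it suffices to choose $H,V$ forcing one of these to differ. If $m_2\neq n_2$, the two left edges have bits $V(m_2)$ and $V(n_2)$, so I choose $V$ with $V(m_2)\neq V(n_2)$; if instead $m_2=n_2$ (whence $m_1\neq n_1$), the two bottom edges have bits $H(m_1)+C(m_2)$ and $H(n_1)+C(m_2)$, which differ as soon as $H(m_1)\neq H(n_1)$. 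Extending $H$ and $V$ by $0$ elsewhere produces a genuine path $\lambda\in u\Lambda$ realising \eqref{eq:lp}, so $u$ has no local periodicity.

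The step I expect to be the main obstacle is the propagation/parametrisation in paragraph two: verifying carefully that the four relations, combined with the factorization property, really do pin down every quartet-path through the single pair of sequences $H$ and $V$, including both the existence and uniqueness of the grid fill-in. This is precisely where the exact form of the quartet is needed, as opposed to the mere existence of commuting loops; once it is established, the case analysis producing \eqref{eq:lp} is routine.
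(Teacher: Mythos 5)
Your proposal is correct, but it is organized quite differently from the paper's proof, so it is worth comparing the two. The paper argues by cases on the pair $(m,n)$: when $m\vee n\notin\{m,n\}$ it takes $\mu=\alpha_1^{q_1}\beta_2^{q_2}\alpha_2\beta_2$ and uses the fact that $\beta_2$ commutes with $\alpha_1,\alpha_2$ to place the single $\alpha_2$ at different offsets as seen from $m$ and from $n$; when $m\vee n\in\{m,n\}$ it appends tails $(\beta_1\alpha_2)^{m_2-n_2}$ or $(\beta_1\alpha_1)^{m_1-n_1}$ and distinguishes the two segments by a $\beta_1$ versus $\beta_2$ occurrence. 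Thus the paper proves no general lemma: each case gets a bespoke path and an inline computation with the relations, and the compared segments have large degree. You instead isolate a structural classification of all quartet paths by the two bit sequences $(H,V)$, with the explicit formulas $B(i,j)=V(j)$ and $A(i,j)=H(i)+C(j)\pmod 2$, after which taking $d(\lambda)=(m\vee n)+(1,1)$ reduces \eqref{eq:lp} to comparing two unit squares, settled by the dichotomy $m_2\neq n_2$ versus $m_1\neq n_1$. What your route buys is uniformity (no case analysis on comparability of $m$ and $n$) and a reusable normal form for quartet paths; the cost is exactly the step you flag, the grid-filling induction. That induction does go through: your propagation table is right (among the four relations of Definition~\ref{def:aq}, each of the four (bottom,\,left) bit pairs occurs exactly once, the left and right bits agree, and bottom $=$ top $+$ left mod $2$), and composing squares row by row via the factorization property both produces the path and identifies all of its unit squares; note that for the theorem you only need the existence direction together with the edge formulas, not uniqueness of the fill-in. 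As a consistency check, the paper's first-case path is the instance of your parametrization with $V\equiv 0$ and $H=(0,\dots,0,1)$, and its second-case paths correspond to taking $V$ with a terminal block of $1$'s.
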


\begin{proof}
Fix $u \in \Lambda^0$, and let $m \neq n \in \mathbb{N}^2$. Suppose that $m \vee n \neq m$ or $n$.  Let $q_1,q_2 \ge 1$ be such that $q_1 \ge ( m \vee n )_1$
and $q_2 \ge ( m \vee n )_2$ and define $\lambda \in u \Lambda^{q}$ by
$\lambda = \alpha_1^{q_1} \beta_2^{q_2}$. Without loss of generality, suppose that $n_1 < m_1$  and choose $\mu = \lambda \alpha_2 \beta_2$. 
Because of the factorisation property $\beta_2\alpha_1=\alpha_1\beta_2$, we have $\mu=\alpha_1^{q_1}\beta_2^{q_2}\alpha_2\beta_2=\alpha_1^{m_1}\beta_2^{m_2}\alpha_1^{q_1-m_1}\beta_2^{q_2-m_2}\alpha_2\beta_2
=\alpha_1^{n_1}\beta_2^{n_2}\alpha_1^{q_1-n_1}\beta_2^{q_2-n_2}\alpha_2\beta_2$.
 Since $q_1 - m_1 < q_1 - n_1$ we have
\[
\mu ( m , m+(q_1-m_1+1) e_1 )=\alpha_2 \neq \alpha_1=\mu ( n , n + (q_1-m_1+1) e_1 ),
\]

\noindent  and so there is no local periodicity at $u$.

Now suppose that $m \vee n = m$ or $n$. Without loss of generality suppose that
$m \vee n = m$. Let $q_1, q_2 \ge 1$ be such that $q_1 \ge m_1$ and $q_2  \ge m_2$  and define $\lambda \in u \Lambda^q$ by $\lambda = \alpha_1^{q_1} \beta_2^{q_2}$. If $m > n$, then put $\mu = \lambda (\beta_1 \alpha_2)^{m_2-n_2}$, then 
\[
\mu ( m , m + (q_2-m_2+1) e_2)=\beta_1 \neq \beta_2 = \mu ( n , n + (q_2-m_2+1) e_2 )  ,
\]

\noindent and so there is no local periodicity at $u$. If  $m_1 = n_1$ but $m_2 > n_2$ let $\lambda \in u \Lambda^q$ be defined by $\lambda = \alpha_1^{q_1} \beta_2^{q_2}$. Let $\mu = \lambda ( \beta_1 \alpha_2)^{m_2-n_2}$, then 
\[
\mu ( n , n+(q_2-m_2+1)e_2) = \beta_2 \neq \beta_1 = \mu ( m , m +(q_2-m_2+1)e_2 ) ,
\]

\noindent and so there is no local periodicity at $u$.  If $m_2 =n_2$ but $m_1 > n_1$ then similar argument applies using $\mu = \lambda ( \beta_1 \alpha_1)^{m_1-n_1}$, then the proof  is complete.
\end{proof}

\noindent Once we have checked no local aperiodicity at a vertex, we are able to deduce that many other vertices have no local aperiodicity.

\begin{lemma} \label{lem:ishered}
Let $\Lambda$ be a row-finite $k$-graph with no sources. Suppose that there is no local periodicity at $u \in \Lambda^0$ and that $v \le u$, then there is no local periodicity at $v$.
\end{lemma}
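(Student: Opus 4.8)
The plan is to exploit the hypothesis \emph{at $u$} and transport a witnessing path back to $v$ by pre-composing with a fixed connecting path. Since $v \le u$ means $v\Lambda u \neq \emptyset$, I would fix $\tau \in v\Lambda u$ and set $p = d(\tau)$. Then I fix an arbitrary pair $m \neq n \in \mathbb{N}^k$, write $s = m \vee n$, and aim to produce a single $\lambda \in v\Lambda$ with $d(\lambda) \ge s$ satisfying \eqref{eq:lp}; since the pair is arbitrary, this gives no local periodicity at $v$.

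First I would apply the no-local-periodicity hypothesis at $u$ to the \emph{same} pair $(m,n)$. This yields $\mu \in u\Lambda$ with $\ell := d(\mu) \ge s$ and $\mu(m, m+\ell - s) \neq \mu(n, n+\ell - s)$. As $s(\tau) = u = r(\mu)$, the path $\lambda := \tau\mu$ lies in $v\Lambda$ and has $d(\lambda) = p + \ell \ge s$, so the two sub-paths occurring in \eqref{eq:lp} for $\lambda$ are defined. Writing $L := d(\lambda) - s = p + \ell - s$, the two sub-paths in question are $\lambda(m, m+L)$ and $\lambda(n, n+L)$.

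The crux is to compare not these sub-paths directly but their degree-$(\ell - s)$ suffixes. Since $L - (\ell - s) = p$, the factorization property splits $\lambda(m,m+L) = \lambda(m,m+p)\,\lambda(m+p, m+L)$, and because $m+p \ge p$ the terminal factor sits inside the $\mu$-part of $\lambda$; using $m+L = m+p+\ell - s$ it equals $\mu(m, m+\ell - s)$. The identical computation identifies the suffix of $\lambda(n,n+L)$ with $\mu(n, n+\ell - s)$. By the choice of $\mu$ these two suffixes differ, and since a path determines its factorization uniquely, differing suffixes force $\lambda(m,m+L) \neq \lambda(n,n+L)$, which is precisely \eqref{eq:lp}. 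Hence $\lambda$ witnesses no local periodicity at $v$ for $(m,n)$.

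The step I expect to require the most care is this last one, precisely because $m$ and $n$ need not dominate $p$: when they are small, the sub-paths $\lambda(m,\cdot)$ and $\lambda(n,\cdot)$ overlap the prefix $\tau$, so one cannot realign them as sub-paths of $\mu$ by a naive shift. Passing to the degree-$(\ell - s)$ suffixes sidesteps this entirely, since both suffixes begin at degree $\ge p$ and therefore lie within the $\mu$-part of $\lambda$; the only bookkeeping is to verify the index identities $m+L = m+p+\ell - s$ and $n+L = n+p+\ell - s$, so that the suffixes are exactly the sub-paths of $\mu$ supplied by the hypothesis at $u$. The degenerate case $v=u$ (that is, $p=0$) is subsumed by the same computation, so no separate argument is needed.
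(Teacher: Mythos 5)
Your proof is correct, and its skeleton is the same as the paper's: fix a connecting path from $v$ to $u$, transport a witness $\mu$ at $u$ back to $v$ by pre-composition, and detect \eqref{eq:lp} for the composite by comparing the uniquely determined terminal factors of the two segments, which lie entirely in the $\mu$-part. The genuine difference is which pair you feed into the hypothesis at $u$: you use $(m,n)$ itself, whereas the paper first passes to the reduced pair $(p,q)=(m - m\wedge n,\, n - m\wedge n)$ and takes a witness $\mu$ for that pair. This difference is not merely cosmetic. In your version the two suffixes of degree $\ell - s$ sit at relative position $d(\tau)$ inside $\lambda(m,m+L)$ and $\lambda(n,n+L)$, and $d(\tau)\ge 0$ holds trivially, so the suffix comparison is always legitimate — exactly the point you flagged as delicate, handled correctly. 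In the paper's version the corresponding offset is $d(\kappa)-m\wedge n$, and the concluding display $0 \le d(\kappa)-m\wedge n \le \cdots$ is asserted without justification: it fails whenever the connecting path $\kappa$ is shorter than $m\wedge n$ in some coordinate, in which case $\lambda=\kappa\mu$ need not even satisfy $d(\lambda)\ge m\vee n$, and knowing that two sub-paths of $\mu$ differ does not force the two segments in \eqref{eq:lp} to differ (paths can differ while having equal terminal segments, so the difference cannot be "slid forward"). The reduction to $(p,q)$ buys nothing here, while your choice of keeping the original pair $(m,n)$ is what makes the transport argument airtight; it would also serve as the natural repair of the paper's own write-up.
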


\begin{proof}
Let $\kappa \in v \Lambda u$, and $m \neq n \in \mathbb{N}^k$. Let $p=m - m \wedge n$ and $q = n - m \wedge n$, then $p \neq q \in \mathbb{N}^k$.
Since $u$ has no local periodicity, there is $\mu \in u \Lambda^t$, where $t > p \vee q$
such that
\begin{equation} \label{eq:above}
\mu ( p , p + d (\mu) - p \vee q ) \neq \mu ( q , q + d ( \mu) - p \vee q ) .
\end{equation}

\noindent Let $\lambda = \kappa \mu$ then we have
\begin{align*}
\lambda ( d ( \kappa ) + m - m \wedge n , &\; d ( \kappa ) + m - m \wedge n + d (\mu) - p \vee q ) \\
&= \mu ( p , p + d (\mu) - p \vee q ) \\
& \neq \mu ( q , q + d ( \mu) - p \vee q ) \\
&= \lambda ( d ( \kappa ) + n - m \wedge n , d ( \kappa ) + n - m \wedge n + d (\mu) - p \vee q ) .
\end{align*}

\noindent So we have
\[
\lambda ( m , m + d ( \lambda )- m \vee n ) \neq \lambda ( n , n + d ( \lambda ) - m \vee n ),
\]

\noindent  since
\[
0 \le d ( \kappa )  - m \wedge n \le d ( \kappa ) - m \wedge n + d (\mu) - p \vee q ) \le  d ( \lambda )- m \vee n
\]

\noindent and the result follows.
\end{proof}

\noindent Pulling the previous two results together we have the following two Propositions.

\begin{proposition}\label{lem:2graph-ap}
Let $\Lambda$ be a row-finite $2$-graph with no sources. Suppose that every vertex is connected to a vertex with an aperiodic quartet, then $\Lambda$ is
aperiodic.
\end{proposition}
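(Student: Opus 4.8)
The plan is to reduce the global aperiodicity of $\Lambda$ to the two local results already in hand: Theorem~\ref{thm:aq}, which says that an aperiodic quartet forces no local periodicity at its base vertex, and Lemma~\ref{lem:ishered}, which says that no local periodicity is inherited downward along the order $\le$. Since by Definition~\ref{aperiodic} the graph $\Lambda$ is aperiodic exactly when \emph{every} vertex has no local periodicity, it suffices to establish this property one vertex at a time, and the hypothesis is precisely what lets us find, for each vertex, a nearby quartet vertex to which these two results can be applied.

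Concretely, I would fix an arbitrary $v \in \Lambda^0$. By hypothesis $v$ is connected to a vertex $u$ carrying an aperiodic quartet, and I would read ``connected to'' in the direction demanded by Lemma~\ref{lem:ishered}, namely $v\Lambda u \neq \emptyset$, i.e.\ $v \le u$. It is worth making this explicit, since the hypothesis must supply a path from $u$ to $v$: that is exactly the datum $\kappa \in v\Lambda u$ that the inheritance lemma consumes. Theorem~\ref{thm:aq} then gives that $u$ has no local periodicity, and Lemma~\ref{lem:ishered}, applied with this $u$ and our $v \le u$, transports the conclusion to $v$. As $v$ was arbitrary, every vertex of $\Lambda$ has no local periodicity, and $\Lambda$ is aperiodic.

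The bookkeeping above is routine; the main obstacle is that Theorem~\ref{thm:aq} is stated only for $(1,1)$-aperiodic quartets, whereas Definition~\ref{def:aq} permits an $(a,b)$-aperiodic quartet for arbitrary $a,b \in \mathbb{Z}^+$. If the hypothesis is understood to furnish $(1,1)$-quartets throughout, then the combination above is complete and nothing more is needed. Otherwise the genuine work is to upgrade Theorem~\ref{thm:aq} to general $(a,b)$: the quartet relations let one toggle $\alpha_1 \leftrightarrow \alpha_2$ via $\beta_1$ while $\beta_2$ fixes them, so I would mirror the case analysis in the proof of Theorem~\ref{thm:aq} but take the distinguishing witnesses at the block scale $(ae_1, be_2)$ rather than at single edges. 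The delicate point — and where I expect to concentrate the effort — is verifying that such a block-level witness still detects the inequality in \eqref{eq:lp} for \emph{every} pair $m \neq n$, including shifts $m,n$ that land strictly inside a quartet block and therefore do not align with the block boundaries at which the relations act.
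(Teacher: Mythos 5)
Your proposal is correct and is essentially the paper's own proof, which consists of the single line ``Follows by Theorem~\ref{thm:aq} and Lemma~\ref{lem:ishered}'': take the quartet vertex $u$ with $v \le u$, apply Theorem~\ref{thm:aq} to get no local periodicity at $u$, and let Lemma~\ref{lem:ishered} transport this to $v$. Your concern about $(a,b)$-quartets versus $(1,1)$-quartets is a legitimate observation, but the paper glosses over it in exactly the same way --- its proof covers only the $(1,1)$ case even though the subsequent example invokes the proposition for $(a,b)$-quartets --- so your argument is no less complete than the published one.
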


\begin{proof}
Follows by Theorem~\ref{thm:aq} and Lemma~\ref{lem:ishered}.
\end{proof}

\begin{example}
As seen in Example \ref{ex:kex} (c),
let $m,n \ge 1$ and $\theta : \underline{m} \times \underline{n}$ be a bijection
such that there are $i \neq i' \in \underline{m}$ and $j \neq  j' \in \underline{n}$
such that $\theta (i,j) = (i',j)$, $\theta (i',j)=(i,j)$, $\theta (i,j')=(i,j')$ and $\theta (i',j')=(i',j')$ then $f_i , f_{i'} , g_j , g_{j'}$ is a $(1,1)$--aperiodic quartet and so $\mathbb{F}^2_\theta$ is aperiodic by Proposition~\ref{lem:2graph-ap}. More generally, for $a,b \ge 1$ the bijection $\theta$ induces a bijection $\widetilde{\theta} : \underline{m}^a \times \underline{n}^b \to \underline{m}^a \times \underline{n}^b$. If there are
$i \neq i' \in \underline{m}^a$ and $j \neq  j' \in \underline{n}^b$
such that $\theta (i,j) = (i',j)$, $\theta (i',j)=(i,j)$, $\theta (i,j')=(i,j')$ and $\theta (i',j')=(i',j')$ then $f_i, f_{i'} , g_j , g_{j'}$\footnote{where for $i \in \underline{m}^a$ we have $f_i = f_{i_1} \ldots f_{i_a}$ and similarly for $j \in \underline{n}^b$ we have $g_j = g_{j_1}\ldots g_{j_b}$} is an $(a,b)$--aperiodic quartet and so $\mathbb{F}^2_\theta$ is aperiodic by Proposition~\ref{lem:2graph-ap}.
These results are compatible with \cite[Theorem 3.4]{DY}. Indeed, when $\mathbb{F}_\theta^2$ is aperiodic we have $C^* ( \mathbb{F}_\theta^2 ) \cong \mathcal{O}_m \otimes \mathcal{O}_n$ (cf.\ \cite[\S 5]{Ya}).
\end{example}

\begin{proposition} \label{prop:sap}
Let $\Lambda$ be a row-finite $2$-graph with no sources. Suppose that every vertex has an aperiodic quartet, then $\Lambda$ is strongly aperiodic.
\end{proposition}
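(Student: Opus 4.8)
The plan is to verify the definition of strong aperiodicity (Definition~\ref{def:sap}) directly. I must show that for every saturated hereditary subset $H \subsetneq \Lambda^0$, the quotient graph $\Gamma(\Lambda \setminus H)$ satisfies the aperiodicity condition. By Theorem~\ref{ff}(d), $\Gamma(\Lambda \setminus H)$ is again a row-finite $2$-graph with no sources, with vertex set $\Gamma(\Lambda \setminus H)^0 = \Lambda^0 \setminus H$, so proving aperiodicity amounts to showing that there is no local periodicity at each $v \in \Lambda^0 \setminus H$.

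So I would fix such an $H$ and a vertex $v \in \Lambda^0 \setminus H$, and invoke the hypothesis to produce an aperiodic quartet $(\alpha_1, \alpha_2, \beta_1, \beta_2)$ at $v$ in $\Lambda$, with $\alpha_i \in v\Lambda^{ae_1}v$ and $\beta_i \in v\Lambda^{be_2}v$. The crucial point---and the reason the hypothesis asks for a quartet at each vertex rather than merely a connection to one as in Proposition~\ref{lem:2graph-ap}---is that every path in the quartet is a loop based at $v$. Consequently each of $\alpha_1, \alpha_2, \beta_1, \beta_2$ has source $v \notin H$ and so survives in $\Gamma(\Lambda \setminus H)$; likewise the composites entering the commutation relations $\beta_2\alpha_1 = \alpha_1\beta_2$, $\beta_1\alpha_1 = \alpha_2\beta_1$, and so on, are loops at $v$ and hence lie in $\Gamma(\Lambda \setminus H)$. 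Since $\Gamma(\Lambda \setminus H)$ inherits its composition from $\Lambda$, these equalities and the distinctness relations $\alpha_1 \neq \alpha_2$, $\beta_1 \neq \beta_2$ are retained, so $(\alpha_1, \alpha_2, \beta_1, \beta_2)$ is an aperiodic quartet at $v$ in $\Gamma(\Lambda \setminus H)$. Applying Theorem~\ref{thm:aq} then gives that $\Gamma(\Lambda \setminus H)$ has no local periodicity at $v$. As $v$ and $H$ were arbitrary, $\Lambda$ is strongly aperiodic.

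I expect the only real subtlety to be the verification that the quartet transports intact into the quotient: one must confirm that none of the paths or composites appearing in Definition~\ref{def:aq} is deleted in forming $\Gamma(\Lambda \setminus H)$, which reduces to the observation that each has source $v$. This is precisely where the strengthened hypothesis is needed---a vertex merely connected to a distant quartet-bearing vertex could be severed from it once the intervening vertices are absorbed into $H$, whereas a vertex's own loops are removed only if the vertex itself is, which cannot happen for $v \in \Lambda^0 \setminus H$.
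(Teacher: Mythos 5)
Your proof is correct and takes essentially the same approach as the paper: both arguments transport the quartet at each $v \in \Lambda^0 \setminus H$ into $\Gamma(\Lambda \setminus H)$ via the key observation that all four paths are loops based at $v \notin H$ and hence survive the quotient. The only cosmetic difference is that you invoke Theorem~\ref{thm:aq} directly, while the paper cites Proposition~\ref{lem:2graph-ap}, which is just Theorem~\ref{thm:aq} combined with Lemma~\ref{lem:ishered} (the latter being unnecessary here since every vertex carries its own quartet).
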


\begin{proof}
Let $H$ be a hereditary subset of $\Lambda^0$. Since every vertex in $\Lambda^0$ has an aperiodic quartet, there is an aperiodic quartet $(\alpha_1,\alpha_2,\beta_1,\beta_2)$ at any  $v\in\Lambda^0\setminus H$. Since the source and range of $\alpha_i$ and $\beta_i$ for $i=1,2$ are the vertex $v\in\Lambda^0\setminus H$, $\alpha_i,\beta_i\in \Gamma(\Lambda\setminus H)$ for $i=1,2$. Thus, by Proposition \ref{lem:2graph-ap} $\Lambda^0\setminus H$ is aperiodic, which proves that $\Lambda$ is strongly aperiodic.
\end{proof}

\noindent Now we turn our attention to the description of all the prime ideals in
$C^* ( \Lambda )$.  we update the definition of a maximal tail given in \cite[Proposition 6.1]{BPRS} into the context of row-finite $k$-graphs with no sources.

\begin{definition}\label{Def-MT}
Let $\Lambda$ be a row-finite $k$-graph with no sources. A nonempty subset $\gamma$ of ${\Lambda}^0$ is called \textit{maximal tail} if
\begin{enumerate}[(a)]
\item \label{one} for every $v_1,v_2\in \gamma$ there is $w\in \gamma$ such that $v_1\Lambda w\ne\emptyset$ and $v_2\Lambda w\ne\emptyset$,
\item \label{two} for every $v\in \gamma$ and $1\le i\le k$ there is $e\in v{\Lambda}^{e_i}$ such that $s(e)\in \gamma$, and
\item for $w\in \gamma$ and $v\in{\Lambda}^0$ with $v\Lambda w\ne\emptyset$ we have $v\in \gamma$.
\end{enumerate}
\end{definition}

\begin{remark}
Versions of condition (\ref{one}) in Definition~\ref{Def-MT} can be traced back to \cite[Theorem 3.8 (iii)]{B} and \cite[Lemma 3.1 (iii)]{Do}.
The word ``tail'' in the above definition is meant to convey the sense of conditions (a) and (b), and ``maximal'' to convey that of condition (c).
Also the notion of maximal tail for a $k$-graph was introduced in Sims' thesis \cite[Proposition~5.5.3]{Sims_thesis}, but the condition (b) in the above definition is stronger than the condition (MT2) in \cite{Sims_thesis} since not every finite exhaustive set contains edges $e\in v\Lambda^{e_i}$ for all $1\le i\le k$.
\end{remark}

\noindent
Now we state the main theorem of this paper, which is a generalization of \cite[Proposition 6.1]{BPRS}. We give complete proof because
of the new condition (\ref{two}) in the definition of the maximal tail of a $k$-graph.

\begin{theorem}\label{MT}
Let $\Lambda$ be a row-finite strongly aperiodic k-graph with no sources.
 Let $H\subsetneq  {\Lambda}^0$ be a saturated hereditary subset. Then $I_H$ is primitive if and only if $\gamma:={\Lambda}^0\setminus H$ is a maximal tail.
\end{theorem}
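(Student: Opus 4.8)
The plan is to translate everything through the lattice correspondence of Theorem~\ref{ff} and to reduce primality of $I_H$ to cofinality of the quotient $k$-graph $\Gamma(\Lambda\setminus H)$. First I would dispose of the two ``easy'' conditions of Definition~\ref{Def-MT}. Since $\Lambda$ has no sources and $H$ is saturated hereditary, I claim $\gamma=\Lambda^0\setminus H$ automatically satisfies conditions (b) and (c): condition (c) is exactly the contrapositive of hereditariness of $H$, and condition (b) is exactly the contrapositive of saturation of $H$, using that $v\Lambda^{e_i}\ne\emptyset$ and $r^{-1}(v)\ne\emptyset$ for every $v$ because there are no sources. Hence ``$\gamma$ is a maximal tail'' is equivalent to the single remaining condition (a), and the theorem reduces to showing that $I_H$ is prime if and only if $\gamma$ satisfies (a).

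Next I would pass to the quotient. By Theorem~\ref{ff}(d), $C^*(\Lambda)/I_H\cong C^*(\Lambda')$ with $\Lambda'=\Gamma(\Lambda\setminus H)$ a row-finite $k$-graph with no sources and $(\Lambda')^0=\gamma$, and $I_H$ is prime in $C^*(\Lambda)$ precisely when $\{0\}$ is prime in $C^*(\Lambda')$. A preliminary step is to check that $\Lambda'$ is again strongly aperiodic: the saturated hereditary subsets $K$ of $\Lambda'$ are exactly the sets $H'\cap\gamma$ for saturated hereditary $H'\supseteq H$ in $\Lambda$, and $\Gamma(\Lambda'\setminus K)=\Gamma(\Lambda\setminus H')$ is aperiodic by strong aperiodicity of $\Lambda$. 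Thus by Proposition~\ref{GI} every ideal of $C^*(\Lambda')$ is gauge invariant, so by Theorem~\ref{ff} the nonzero ideals of $C^*(\Lambda')$ are precisely the $I_K$ for nonempty saturated hereditary $K\subseteq\gamma$, with $I_{K_1}\cap I_{K_2}=I_{K_1\cap K_2}$. Therefore $\{0\}$ is prime in $C^*(\Lambda')$ if and only if no two nonempty saturated hereditary subsets of $\gamma$ are disjoint.

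Finally I would identify this ``no disjoint pair'' condition with cofinality of $\Lambda'$, i.e.\ with condition (a). I would first note that a path in $\Lambda$ whose range and source lie in $\gamma$ stays entirely in $\gamma$ (an interior vertex in $H$ would force the source into $H$ by hereditariness), so $v_1\le v_2$ inside $\Lambda'$ coincides with $v_1\Lambda v_2\ne\emptyset$ having target in $\gamma$; thus condition (a) for $\gamma$ is exactly cofinality of $\Lambda'$. For (a)$\Rightarrow$prime: given nonempty saturated hereditary $K_1,K_2\subseteq\gamma$, pick $u_i\in K_i$, use cofinality to get a common $w$ with $u_1\le w$ and $u_2\le w$, and hereditariness gives $w\in K_1\cap K_2$. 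For the converse I argue the contrapositive: if (a) fails there are $v_1,v_2\in\gamma$ with $L_{v_1}\cap L_{v_2}=\emptyset$ in $\Lambda'$ (where $L_v=\{x:v\le x\}$); then $\overline{L_{v_1}}$ and $\overline{L_{v_2}}$ are nonempty saturated hereditary, and two applications of Lemma~\ref{lem2} to a hypothetical common element would produce some $w\in L_{v_1}\cap L_{v_2}$, a contradiction, so they are disjoint and $\{0\}$ is not prime.

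The main obstacle is this converse direction, together with the temptation to run it directly in $\Lambda$ using $\overline{L_{v_1}},\overline{L_{v_2}}$: there the vertices produced by Lemma~\ref{lem2} can wander into $H$, so one cannot conclude that $\overline{L_{v_1}}\cap\overline{L_{v_2}}$ avoids $\gamma$. Passing to $\Lambda'$, where every vertex lies in $\gamma$ and every connecting path stays in $\gamma$, is exactly what removes this difficulty and turns Lemma~\ref{lem2} into an immediate contradiction; the other point requiring care is the verification that $\Lambda'$ inherits strong aperiodicity, so that Proposition~\ref{GI} may be applied to the quotient.
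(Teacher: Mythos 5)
Your proposal is correct, and it relies on the same core machinery as the paper --- Proposition~\ref{GI}, the lattice isomorphism and quotient description of Theorem~\ref{ff}, and a double application of Lemma~\ref{lem2} --- but it organizes the argument differently. The paper proves ``maximal tail $\Rightarrow$ prime'' entirely upstairs in $C^*(\Lambda)$: from $I_1\cap I_2\subseteq I_H$ it applies Proposition~\ref{GI} to $\Lambda$ itself, writes $I_j=I_{H_j}$, and uses condition (a) of Definition~\ref{Def-MT} together with heredity to force $H_1\subseteq H$ or $H_2\subseteq H$. You instead pass to $\Lambda'=\Gamma(\Lambda\setminus H)$ at the outset and run both implications there, which obliges you to prove that strong aperiodicity descends to quotients; your sketch of that step (saturated hereditary $K\subseteq\gamma$ correspond to saturated hereditary $H'=H\cup K\supseteq H$ in $\Lambda$, with $\Gamma(\Lambda'\setminus K)=\Gamma(\Lambda\setminus H')$) is correct, but it is a lemma the paper never needs to state or prove. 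What your route buys is a clean intermediate criterion --- $\{0\}$ is prime in $C^*(\Lambda')$ if and only if no two nonempty saturated hereditary subsets of $(\Lambda')^0$ are disjoint, if and only if condition (a) holds --- plus the quotient-stability of strong aperiodicity, a fact of independent interest; what the paper's route buys is brevity, since applying Proposition~\ref{GI} directly to $\Lambda$ avoids the extra verification. Your converse direction is the paper's argument in contrapositive form: where the paper extracts a common upper bound for $v_1,v_2$ from a point $y\in\overline{H_1}\cap\overline{H_2}$ by two applications of Lemma~\ref{lem2} in the quotient graph, you derive a contradiction from a hypothetical common element of $\overline{L_{v_1}}\cap\overline{L_{v_2}}$; and your disposal of conditions (b) and (c) via saturation and heredity of $H$ matches the second half of the paper's proof essentially verbatim.
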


\begin{proof}
First suppose that $\gamma\in{\Lambda}^0$ is a maximal tail. Let $H={\Lambda}^0\setminus\gamma$, so $H \neq \Lambda^0$. To see that $H$ is hereditary: let $v\in H$ and suppose $v\le w$. If $w\in\gamma$, then $v\in\gamma$ by $(c)$. Thus, $w\in H={\Lambda}^0\setminus\gamma$. To see that $H$ is saturated: let $v\in{\Lambda}^0$ such that $\{s(\lambda)\;:\;\lambda\in v{\Lambda}^{e_i}\}\subset H$ for some $i\in\{1,\dots,k\}$. If $v\in\gamma$, then there are $\lambda\in v{\Lambda}^{e_i}$ for all $i$ such that $s(\lambda)\in\gamma$ by $(b)$. But this contradicts $\{s(\lambda)\;:\;\lambda\in v{\Lambda}^{e_i}\}\subset H$ for some $i\in\{1,\dots,k\}$. Thus, $v\in H={\Lambda}^0\setminus\gamma$. Now to show that $I_H$ is prime, suppose $I_1$ and $I_2$ are ideals of $C^{\ast}(\Lambda)$ and $I_1\cap I_2\subset I_H$. Since $\Lambda$ is strongly aperiodic, every ideal is gauge invariant by Proposition \ref{GI}. Then by Theorem \ref{ff} $(a)$ and $(c)$, there exist corresponding saturated hereditary subsets $H_1$ and $H_2$ such that $I_1=I_{H_1}$, $I_2=I_{H_2}$ and $I_{H_1\cap H_2}=I_{H_1}\cap I_{H_2}$. Thus, $I_1\cap I_2\subset I_H$ implies that $H_1\cap H_2\subset H$. We claim that $H_1\subset H$ or $H_2\subset H$. Suppose that $H_1\nsubseteq H$ and $H_2\nsubseteq H$, then there exist $v_1\in H_1\setminus H$, $v_2\in H_2\setminus H$. i.e. $v_1\in \gamma$ and $v_2\in\gamma$. By $(a)$, there is $v\in\gamma$ such that $v_1\le v$ and $v_2\le v$. Since $H_1$ and $H_2$ are hereditary, $v\in H_1$ and $v\in H_2$. So $v\in H_1\cap H_2\subset H={\Lambda}^0\setminus\gamma$ which contradicts $v\in\gamma$. Thus, $H_1\subset H$ or $H_2\subset H$ that implies that $I_1=I_{H_1}\subset I_H$ or $I_2=I_{H_2}\subset I_H$. Hence $I_H$ is prime.

Now suppose $H$ is saturated and hereditary and $I_H$ is primitive. Let $\gamma={\Lambda}^0\setminus H$, then $\gamma$ satisfies $(c)$ : suppose not then $v\le w$ and $w\in\gamma$. If $v\notin\gamma$, i.e. $v\in H$, then $w\in H$ since $H$ is hereditary, which contradicts $w\in\gamma$. So $v\in\gamma$. To show that $\gamma$ satisfies $(b)$, let $v\in\gamma$. Since $\Lambda$ has no sources, $v{\Lambda}^{e_i}\ne \emptyset$ for all $i\in\{1,\dots,k\}$. So there are $\lambda\in v{\Lambda}^{e_i}$ for all $i\in\{1,\dots,k\}$. Suppose $\{s(\lambda): v{\Lambda}^{e_i}\}\subset H$ for some $i\in\{1,\dots,k\}$, then $v\in H$ since $H$ is saturated, which contradicts $v\in \gamma$. Thus, there are $\lambda\in v{\Lambda}^{e_i}$ such that $s(\lambda)\in\gamma$ for all $i\in\{1,\dots,k\}$. To prove $(a)$, recall that for a hereditary saturated set $H$, $C(\Lambda)/I_H$ is isomorphic to $C^{\ast}(\Gamma(\Lambda\setminus H))$ by Theorem \ref{ff} (d).
Because $I_H$ is primitive in $C^{\ast}(\Lambda)$, $\{0\}$ is primitive in $C^{\ast}(\Gamma(\Lambda\setminus H))$. Suppose that $v_1, v_2\in{\Lambda}^0\setminus H$. Then $H_i=\{x\in{\Lambda}^0\setminus H: v_i\le x\}$ are non-empty hereditary subsets of ${\Lambda}^0\setminus H={\Gamma(\Lambda\setminus H)}^0$. Since $\{0\}$ is prime in $C^{\ast}(\Gamma(\Lambda\setminus H))$, we must have $I_{\overline{H_1}}\cap I_{\overline{H_2}}\ne\{0\}$. If $I_{\overline{H_1}}\cap I_{\overline{H_2}}=\{0\}$, then $I_{\overline{H_1}}\cap I_{\overline{H_2}}\subseteq\{0\}$ and the fact that $\{0\}$ is prime implies that $I_{\overline{H_1}}\subseteq\{0\}$ or $I_{\overline{H_2}}\subseteq\{0\}$. Hence, $I_{\overline{H_1}}=\{0\}$ or $I_{\overline{H_1}}=\{0\}$, but $I_{H_1}\ne\{0\}$ since $H_1\ne\emptyset$. Thus, $I_{\overline{H_1}}\cap I_{\overline{H_2}}\ne\{0\}$ implies $\overline{H_1}\cap\overline{H_2}\ne\emptyset$ by Theorem \ref{ff} $(c)$. Say $y\in\overline{H_1}\cap\overline{H_2}$, then $y\in\overline{H_1}=\overline{\{x\in {\Lambda}^0\setminus H: v_1\le x\}}$. By Lemma \ref{lem2}, there is $x\in{\Lambda}^0\setminus H$ such that $y\le x$ and $v_1\le x$. Since $y\in\overline{H_2}$ and $\overline{H_2}$ is hereditary, we have $x\in\overline{H_2}=\overline{\{x\in {\Lambda}^0\setminus H: v_2\le x\}}$. Applying Lemma \ref{lem2} again on $x$ and $\overline{H_2}$. we have $z\in{\Lambda}^0\setminus H$ such that $x\le z$ and $v_2\le z$. So we have $v_1\le x\le z$ and $v_2\le z$ in $\Gamma({\Lambda}^0\setminus H)$. Thus $\gamma={\Lambda}^0\setminus H$ satisfies $(a)$.
\end{proof}

\begin{remark}
Observe that the strongly aperiodic condition on $\Lambda$ was not used in the second half of the proof of Theorem~\ref{MT}. Hence, for any row finite $k$-graph with no sources if the ideal $I_H$ is primitive, then $\gamma = \Lambda^0 \setminus H$ is a maximal tail. This is proved for arbitrary finitely aligned $k$-graph in \cite[Proposition~5.5.3]{Sims_thesis}.
\end{remark}

\begin{notation}
Let $\Lambda$ be a row-finite $k$-graph with no sources, then we denote the set of maximal tails of $\Lambda$ by $\chi_{\Lambda}$.
\end{notation}

\noindent
For nonempty subsets $K,L$ of ${\Lambda}^0$, we write $K\le L$ to mean that for each $v\in K$, there exists $w\in L$ such that $v\le w$. Thus condition (c) of Definition \ref{Def-MT} says that ``$\{v\}\le\gamma$ implies $v\in \gamma$''. Also, we can describe the saturated hereditary set $H_{\gamma}$ corresponding to $\gamma\in\chi_{\Lambda}$ as either $H_{\gamma}={\Lambda}^0\setminus\gamma$ or $H_{\gamma}=\{v:\{v\}\nleq\gamma\}$, (as in \cite[Proposition 4.1]{ahr}).

The following description of the Jacobson topology of the primitive ideal space of $C^* (\Lambda )$ is a generalization of \cite[Theorem 6.3]{BPRS} which has an identical proof, so we omit it (note that this topology is $T_0$ (see \cite[4.1.4]{Pe})).

\begin{theorem}\label{topMT}
Let $\Lambda$ be a row-finite strongly aperiodic $k$-graph with no sources. Then there is a topology $\mathcal{T}_0$ on the set $\chi_{\Lambda}$ of maximal tails in $\Lambda$ such that
\[
\overline{S}=\{\delta\in\chi_{\Lambda} : \delta\subseteq\bigcup_{\gamma\in S}\gamma\}
\]
for $S\subset\chi_{\Lambda}$, and then $\gamma\mapsto I_{H_{\gamma}}$ is a homeomorphism of $\chi_{\Lambda}$ onto $\text{Prim}\;C^{\ast}(\Lambda)$.
\end{theorem}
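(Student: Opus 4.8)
\noindent The plan is to transport the Jacobson topology from $\operatorname{Prim} C^*(\Lambda)$ across to $\chi_\Lambda$ along the bijection furnished by Theorem~\ref{MT}, and then to check that the resulting closure operator is exactly the one in the statement. First I would record that $\gamma\mapsto I_{H_\gamma}$ really is a bijection of $\chi_\Lambda$ onto $\operatorname{Prim} C^*(\Lambda)$. Since $\Lambda$ is strongly aperiodic, Proposition~\ref{GI} ensures that every ideal---in particular every primitive ideal---is gauge invariant, hence of the form $I_H$ for a saturated hereditary set $H$ which is unique by Theorem~\ref{ff}. Theorem~\ref{MT} says $I_H$ is primitive precisely when $\Lambda^0\setminus H$ is a maximal tail, so $\gamma\mapsto I_{H_\gamma}$ lands in and exhausts $\operatorname{Prim} C^*(\Lambda)$, while injectivity is immediate from the bijectivity of $H\mapsto I_H$ in Theorem~\ref{ff}. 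I would then \emph{define} $\mathcal{T}_0$ to be the topology on $\chi_\Lambda$ making this bijection a homeomorphism; with that choice the only remaining task is to compute the closure operator of $\mathcal{T}_0$ and identify it with the displayed formula.

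\noindent For this I would invoke the hull-kernel description of the Jacobson topology: for $S\subseteq\operatorname{Prim} C^*(\Lambda)$ one has $\overline{S}=\{P : P\supseteq\bigcap_{Q\in S}Q\}$. The crucial computation is that the ideal intersection matches the set intersection. Writing $H_S:=\bigcap_{\gamma\in S}H_\gamma$, I claim $\bigcap_{\gamma\in S}I_{H_\gamma}=I_{H_S}$. The inclusion $\supseteq$ is clear since $H_S\subseteq H_\gamma$ for each $\gamma$ and $H\mapsto I_H$ is order preserving. For $\subseteq$, note that $\bigcap_{\gamma\in S}I_{H_\gamma}$ is again gauge invariant, so it equals $I_{H'}$ where $H'=H_{\bigcap_\gamma I_{H_\gamma}}=\{v : t_v\in I_{H_\gamma}\text{ for all }\gamma\in S\}$; by Theorem~\ref{ff}(b) one has $t_v\in I_{H_\gamma}$ iff $v\in H_\gamma$, whence $H'=H_S$. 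Since $H_\gamma=\Lambda^0\setminus\gamma$, this gives $H_S=\Lambda^0\setminus\bigcup_{\gamma\in S}\gamma$.

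\noindent With the intersection identity in hand the translation of the closure condition is routine. For $\delta\in\chi_\Lambda$ the point $I_{H_\delta}$ lies in $\overline{S}$ iff $I_{H_\delta}\supseteq\bigcap_{\gamma\in S}I_{H_\gamma}=I_{H_S}$, which by the lattice isomorphism of Theorem~\ref{ff}(c) holds iff $H_\delta\supseteq H_S$, i.e. $\Lambda^0\setminus\delta\supseteq\Lambda^0\setminus\bigcup_{\gamma\in S}\gamma$, i.e. $\delta\subseteq\bigcup_{\gamma\in S}\gamma$. Pulling this back along the bijection yields $\overline{S}=\{\delta\in\chi_\Lambda : \delta\subseteq\bigcup_{\gamma\in S}\gamma\}$, and since $\mathcal{T}_0$ was defined so that $\gamma\mapsto I_{H_\gamma}$ is a homeomorphism, both assertions of the theorem follow at once (no separate verification of the Kuratowski axioms is needed, as $\mathcal{T}_0$ is a genuine topology by construction).

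\noindent The step I expect to be the main obstacle is the inclusion $\subseteq$ in the ideal-intersection identity for \emph{infinite} index sets $S$. For finite $S$ it reduces to $I_{H_1\cap H_2}=I_{H_1}\cap I_{H_2}$, exactly as used in the proof of Theorem~\ref{MT}; but the general case requires the two extra ingredients that an arbitrary intersection of gauge invariant ideals is again gauge invariant, and that the saturated hereditary set it determines is correctly pinned down via the identity $H_{I_{H_\gamma}}=H_\gamma$ of Theorem~\ref{ff}(b). Everything else is formal manipulation of the lattice isomorphism.
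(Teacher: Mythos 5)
Your proposal is correct and is essentially the argument the paper has in mind: the paper omits the proof, citing that it is identical to \cite[Theorem 6.3]{BPRS}, and that proof is precisely your hull--kernel transport argument (bijectivity from Proposition~\ref{GI}, Theorem~\ref{ff} and Theorem~\ref{MT}, followed by the computation $\bigcap_{\gamma\in S} I_{H_\gamma}=I_{\bigcap_{\gamma\in S} H_\gamma}$ and the translation of the closure condition). Your treatment of the infinite-intersection step via gauge invariance of arbitrary intersections and Theorem~\ref{ff}(b) is exactly the right justification, so there is no gap.
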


\noindent
Following \cite{B}, we can describe an equivalent topology on $\chi_\Lambda$ which will be useful in the next section.

\begin{lemma}\label{topMT2}
Let $\Lambda$ be a row-finite strongly aperiodic $k$-graph with no sources. For $v\in{\Lambda}^0$, let
\[
 S(v):=\{\chi\in\chi_{\Lambda}:v\in\chi\}.
 \]
 Then, $\{S(v):v\in\Lambda^0\}$ form a countable base of open sets for a topology $\mathcal{T}_1$ on $\chi_{\Lambda}$. Moreover, the topologies $\mathcal{T}_0$ and $\mathcal{T}_1$ on $\chi_{\Lambda}$ are equal.
\end{lemma}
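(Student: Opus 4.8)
The plan is to verify three things in turn: that the family $\{S(v):v\in\Lambda^0\}$ satisfies the two axioms for a base of open sets, that it is countable, and that the resulting topology $\mathcal{T}_1$ agrees with the topology $\mathcal{T}_0$ supplied by Theorem~\ref{topMT}. Throughout I would work directly from Definition~\ref{Def-MT} and the explicit closure formula for $\mathcal{T}_0$.

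To see that $\{S(v)\}$ is a base, I would check covering and the intersection property. Covering is immediate: any maximal tail $\chi$ is nonempty, so choosing $v\in\chi$ gives $\chi\in S(v)$. For the intersection property, suppose $\chi\in S(v_1)\cap S(v_2)$, so $v_1,v_2\in\chi$. Applying Definition~\ref{Def-MT}(a) to $\chi$ produces $w\in\chi$ with $v_1\le w$ and $v_2\le w$, whence $\chi\in S(w)$; it then remains to show $S(w)\subseteq S(v_1)\cap S(v_2)$. For any $\delta\in S(w)$ we have $w\in\delta$, and since $v_i\le w$ Definition~\ref{Def-MT}(c) forces $v_i\in\delta$ for $i=1,2$, so $\delta\in S(v_1)\cap S(v_2)$, as required. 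Countability is then automatic, since $\Lambda$ is a countable category and hence $\Lambda^0$ is countable.

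The substance of the lemma is the identity $\mathcal{T}_0=\mathcal{T}_1$, which I would prove by comparing closed sets using the formula $\overline{S}=\{\delta\in\chi_{\Lambda}:\delta\subseteq\bigcup_{\gamma\in S}\gamma\}$. For $\mathcal{T}_1\subseteq\mathcal{T}_0$ it is enough to show each $S(v)$ is $\mathcal{T}_0$-open, i.e. that its complement $U:=\chi_{\Lambda}\setminus S(v)$ is $\mathcal{T}_0$-closed. If $\delta\in\overline{U}$ then $\delta\subseteq\bigcup_{\gamma\in U}\gamma$; were $v\in\delta$ it would follow that $v\in\gamma$ for some $\gamma\in U$, contradicting $v\notin\gamma$ for every $\gamma\in U$, so $v\notin\delta$ and $\overline{U}=U$. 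For the reverse inclusion $\mathcal{T}_0\subseteq\mathcal{T}_1$, I would show every $\mathcal{T}_0$-closed set $C=\overline{C}$ is $\mathcal{T}_1$-closed: given $\delta\notin C$, from $\delta\not\subseteq\bigcup_{\gamma\in C}\gamma$ I extract a vertex $v\in\delta$ with $v\notin\gamma$ for all $\gamma\in C$; then $\delta\in S(v)$ while $S(v)\cap C=\emptyset$, so the complement of $C$ is $\mathcal{T}_1$-open. Since the two families of closed sets then coincide, so do the topologies.

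I do not anticipate a serious obstacle, as the whole statement unwinds from the definitions once the objects are matched correctly. The one step demanding care is the base intersection property, where both the tail axiom (a) and the maximality axiom (c) of Definition~\ref{Def-MT} are needed together: axiom (a) produces a common dominating vertex $w$ inside $\chi$, and axiom (c) is what propagates membership of $v_1,v_2$ down into every $\delta\in S(w)$. In the topology comparison the only delicate point is to translate faithfully between the closure-operator description of $\mathcal{T}_0$ and the neighbourhood-basis description of $\mathcal{T}_1$; note that strong aperiodicity enters only through Theorem~\ref{topMT}, which guarantees that $\mathcal{T}_0$ is a topology with the displayed closure, and is otherwise unused.
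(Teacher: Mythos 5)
Your proposal is correct. The paper in fact states Lemma~\ref{topMT2} without proof (deferring to the analysis of \cite{B}), and your argument is exactly the intended one: condition (a) of Definition~\ref{Def-MT} supplies the common vertex $w$ and condition (c) propagates membership downward to give the base intersection property, while the comparison of closure operators yields $\mathcal{T}_0=\mathcal{T}_1$, with strong aperiodicity entering only through Theorem~\ref{topMT}.
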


\section{Characterizing $Prim$ $C^{\ast}(\Lambda)$ for a strongly aperiodic $k$-graph $\Lambda$}

\begin{definition}\cite{BE}.
A closed subset $C$ of a topological space is \textit{irreducible} if it cannot be written as the union of two proper closed subsets of itself. A \textit{spectral space} is a $T_0$ space in which every irreducible set is the closure of a point.
\end{definition}

\noindent
The main theorem in this section describes precisely which topological spaces can occur as the primitive ideal space of the $C^*$-algebra of a strongly aperiodic row-finite $k$-graph with no sources, and generalizes \cite[Theorem 4.2]{B}.

\begin{theorem}\label{SS}
Let $X$ be a topological space. Then $X$ is homeomorphic to $Prim\;\;C^{\ast}(\Lambda)$ for a row-finite strongly aperiodic $k$-graph with no sources if and only if $X$ is a spectral space in which the compact open sets form a countable base.
\end{theorem}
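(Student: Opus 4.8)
The plan is to prove the two implications separately, treating the forward (``only if'') direction as a verification that the topology already analysed in Theorem~\ref{topMT} and Lemma~\ref{topMT2} has the three required features, and reducing the reverse (``if'') direction to the $1$-graph case settled by Bates in \cite[Theorem 4.2]{B}. Throughout I would work through the homeomorphism $\gamma \mapsto I_{H_\gamma}$ of Theorem~\ref{topMT}, passing freely between $\operatorname{Prim} C^*(\Lambda)$ and $\chi_\Lambda$.

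For the ``only if'' direction, suppose $X \cong \operatorname{Prim} C^*(\Lambda) \cong \chi_\Lambda$ for a strongly aperiodic $\Lambda$. The space is $T_0$ by the remark preceding Theorem~\ref{topMT}. To see that the compact open sets form a countable base, I would use Lemma~\ref{topMT2}: the countably many sets $S(v)$, $v\in\Lambda^0$, form a base, so it suffices to show each $S(v)$ is compact. Under the homeomorphism, $S(v)$ corresponds to $\{P\in\operatorname{Prim} C^*(\Lambda) : t_v\notin P\}$, the open set determined by the ideal generated by the projection $t_v$; the key point is that a projection lying in the norm-closure of an increasing union of ideals already lies in one of them (a standard functional-calculus argument), which lets any basic open cover of $S(v)$ be reduced to a finite subcover, so $S(v)$ is compact. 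Consequently every compact open set is a finite union of the $S(v)$, and these finite unions form a countable base of compact open sets. Finally, for sobriety I would argue $C^*$-algebraically rather than combinatorially: for the separable algebra $C^*(\Lambda)$ the closed subsets of $\operatorname{Prim} C^*(\Lambda)$ correspond bijectively and order-reversingly to closed ideals via hull and kernel, under which the irreducible closed sets correspond exactly to the prime ideals and the closures of points to the primitive ideals (see \cite{Pe}). Since $C^*(\Lambda)$ is separable, prime and primitive ideals coincide (as already noted at the start of Section~3), so every irreducible closed set is the closure of a point; together with $T_0$ this exhibits $X$ as a spectral space.

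For the ``if'' direction, suppose $X$ is a spectral space whose compact open sets form a countable base. Since a $1$-graph is precisely a $k$-graph with $k=1$, it suffices to realise $X$ by a $1$-graph. By Bates's characterization \cite[Theorem 4.2]{B} there is a directed graph $E$, which we may take to be row-finite, to have no sources, and to satisfy condition (K), with $\operatorname{Prim} C^*(E)\cong X$. Viewing $E$ as a $1$-graph $\Lambda$, Lemma~\ref{lem:KisSA} identifies condition (K) with strong aperiodicity, so $\Lambda$ is a row-finite strongly aperiodic $k$-graph (with $k=1$) with no sources and $\operatorname{Prim} C^*(\Lambda)\cong X$, as required. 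Thus enlarging the class from $1$-graphs to all $k$-graphs produces no new primitive ideal spaces.

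The main obstacle is not the abstract topology but the two compatibility checks that make the reduction legitimate. On the ``only if'' side one must be careful that $S(v)$ really is compact, i.e.\ that the projection argument above applies to the concrete ideals $I_H$ of Theorem~\ref{ff}; on the ``if'' side one must verify that the graph produced by \cite[Theorem 4.2]{B} can be arranged to be row-finite with no sources and to satisfy condition (K), since only then does Lemma~\ref{lem:KisSA} deliver strong aperiodicity. I expect the second of these---pinning down exactly which hypotheses Bates's construction guarantees, and if necessary adding tails to eliminate sources without altering $\operatorname{Prim} C^*$---to require the most care.
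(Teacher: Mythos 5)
Your proposal is correct, but it reaches the theorem by a genuinely different route in the ``only if'' direction, so a comparison is worth recording. The paper proves that direction with graph-specific machinery: sobriety comes from Proposition~\ref{irreducible}, whose proof (following Bates) analyses irreducible closed subsets of $\chi_\Lambda$ via maximal tails, and compactness of the basic open sets comes from Proposition~\ref{basis}, where $S(v)=\beta(Z(v))$ is exhibited as the continuous image of the compact cylinder set $Z(v)\subseteq\Lambda^\infty$. Your replacements --- sobriety from the hull--kernel correspondence together with the coincidence of closed prime and primitive ideals in a separable $C^*$-algebra, and compactness of $\{P:t_v\notin P\}$ from the standard fact that a projection in the closure of a directed union of closed ideals lies in one of them --- are both valid, and they are more general: they show that $\operatorname{Prim}A$ is spectral with a countable base of compact open sets for \emph{any} separable $C^*$-algebra admitting a family of projections whose associated open sets form a base, the only graph-theoretic inputs being Lemma~\ref{topMT2} and the identification $S(v)=\{P:t_v\notin P\}$ (which follows from Theorem~\ref{ff}(b) via Theorem~\ref{topMT}). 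What the paper's longer route buys is the map $\beta:\Lambda^\infty\to\chi_\Lambda$ and the quotient description of $\chi_\Lambda$, which have independent interest. For the ``if'' direction your argument and the paper's are essentially the same proof at different levels of packaging: the paper runs the chain \cite[Theorem 5]{BE} (spectral space $\to$ AF algebra), \cite[Theorem 1]{D} (AF algebra Morita equivalent to the $C^*$-algebra of its Bratteli diagram), and Morita invariance of $\operatorname{Prim}$, then notes that a Bratteli diagram satisfies condition (K) vacuously and invokes Lemma~\ref{lem:KisSA}; this chain is precisely the proof of the result of Bates that you cite. The caveat you flag is real but is shared by the paper's own proof rather than being a defect of yours: Lemma~\ref{lem:KisSA} is stated for row-finite $1$-graphs with \emph{no sources} in the present paper's convention (every vertex receives an edge of each degree), so one must check that the Bratteli diagram, after the arrow-reversal needed to pass between Bates's convention and the $k$-graph convention here, is row-finite with no sources; this holds because the connecting maps of an AF system are injective, so every vertex of the diagram has at least one edge to the next level, and in any case sources could be removed by attaching heads, which preserves condition (K) and the Morita class. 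With those checks made explicit, your proof is complete and rests only on Theorem~\ref{topMT}, Lemma~\ref{topMT2} and Lemma~\ref{lem:KisSA}.
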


\noindent
We first characterize the irreducible subsets of $\chi_\Lambda$. The proof of following result is the same as that of \cite[Lemma 4.3]{B}.

\begin{lemma}
Let $\Lambda$ be a row-finite strongly aperiodic $k$-graph with no sources and let $C\subseteq\chi_{\Lambda}$ be a closed set. Write $\chi_{C}=\cup_{\chi\in C}\chi$. Then the following statements are equivalent:
\begin{enumerate}[(a)]
\item the set $C$ is irreducible.
\item the set $\chi_C$ is a maximal tail.
\end{enumerate}
If these conditions are satisfied, then for all $v\in{\Lambda}^0$, $\chi_C\in S(v)$ if and only if $S(v)\cap C\ne\emptyset$.
\end{lemma}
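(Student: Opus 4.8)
The plan is to prove the two implications separately, working entirely with the combinatorics of maximal tails via the closure formula of Theorem~\ref{topMT} and the basic open sets $S(v)$ of Lemma~\ref{topMT2}; strong aperiodicity enters only through those results, which make the topology on $\chi_\Lambda$ valid. Since $C$ is closed, the closure formula gives $C = \{\delta \in \chi_\Lambda : \delta \subseteq \chi_C\}$, and the same for any closed $C' \subseteq C$; consequently, if $C' \subsetneq C$ is a proper closed subset then $\chi_{C'} \subsetneq \chi_C$ (for otherwise $\chi_{C'} = \chi_C$ would force $C' = \{\delta : \delta \subseteq \chi_{C'}\} = \{\delta : \delta \subseteq \chi_C\} = C$). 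We may assume $C \neq \emptyset$, the empty set being neither irreducible nor a maximal tail. The first thing I would record is that any union $\chi_D = \bigcup_{\gamma \in D}\gamma$ of maximal tails automatically satisfies conditions (b) and (c) of Definition~\ref{Def-MT}: if $w \in \chi_D$, then $w \in \gamma$ for some maximal tail $\gamma \in D$, and applying (b) (resp.\ (c)) for $\gamma$ yields an edge $e \in w\Lambda^{e_i}$ with $s(e) \in \gamma \subseteq \chi_D$ (resp.\ forces every $v$ with $v\Lambda w \neq \emptyset$ into $\gamma \subseteq \chi_D$). Thus the only condition that can fail for $\chi_C$ is the directedness condition (a), and the whole lemma reduces to the assertion that $C$ is irreducible if and only if $\chi_C$ satisfies (a).

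For the implication that $C$ irreducible forces $\chi_C$ to be a maximal tail, I would use the standard reformulation that a closed set is irreducible exactly when any two nonempty relatively open subsets meet. Given $v_1, v_2 \in \chi_C$, each $v_i$ lies in some maximal tail of $C$, so $\gamma \in C$ with $v_i \in \gamma$ witnesses $C \cap S(v_i) \neq \emptyset$; these are nonempty relatively open subsets of $C$, so irreducibility provides $\delta \in C \cap S(v_1) \cap S(v_2)$, that is, a single maximal tail $\delta \in C$ containing both $v_1$ and $v_2$. Condition (a) for $\delta$ then supplies $w \in \delta \subseteq \chi_C$ with $v_1 \le w$ and $v_2 \le w$, which is exactly (a) for $\chi_C$.

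For the converse, suppose $\chi_C$ is a maximal tail and write $C = C_1 \cup C_2$ with $C_1, C_2$ closed in $C$; since $C$ is closed these are closed in $\chi_\Lambda$, and $\chi_C = \chi_{C_1} \cup \chi_{C_2}$. If both $C_i$ were proper, then by the preliminary observation $\chi_{C_1}, \chi_{C_2} \subsetneq \chi_C$, so I could choose $v_2 \in \chi_C \setminus \chi_{C_1}$ and $v_1 \in \chi_C \setminus \chi_{C_2}$. Condition (a) for $\chi_C$ yields $w \in \chi_C = \chi_{C_1} \cup \chi_{C_2}$ with $v_1 \le w$ and $v_2 \le w$. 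If $w \in \chi_{C_1}$, condition (c) for $\chi_{C_1}$ forces $v_2 \in \chi_{C_1}$; if $w \in \chi_{C_2}$, condition (c) for $\chi_{C_2}$ forces $v_1 \in \chi_{C_2}$; either way a contradiction. Hence $C_1 = C$ or $C_2 = C$, so $C$ is irreducible. Finally, when the equivalent conditions hold, $\chi_C$ is a genuine maximal tail, whence $\chi_C \in S(v) \iff v \in \chi_C \iff$ some $\gamma \in C$ satisfies $v \in \gamma \iff S(v) \cap C \neq \emptyset$, which is the last assertion.

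The step I expect to be the main obstacle is not any single estimate but marshalling the correct topological reformulations: converting the "union of two proper closed subsets" definition of irreducibility into the "two nonempty relatively open sets meet" form, and transporting closedness of the $C_i$ in $C$ to closedness in $\chi_\Lambda$ so that the closure formula applies and $\chi_{C_i} \subsetneq \chi_C$ follows. Once those are set up, the stability of condition (c) under unions of maximal tails makes both directions short, and condition (a) of a single maximal tail $\delta \in C$ does all the remaining work.
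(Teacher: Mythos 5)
Your proof is correct and follows essentially the same route as the argument the paper adopts (by reference to Bates's Lemma 4.3): conditions (b) and (c) of Definition~\ref{Def-MT} pass automatically to unions of maximal tails, irreducibility in the ``two nonempty relatively open sets meet'' form produces a single tail $\delta\in C\cap S(v_1)\cap S(v_2)$ whose condition (a) gives directedness of $\chi_C$, and the converse uses the closure formula of Theorem~\ref{topMT} to see that a proper closed subset $C_i\subsetneq C$ has $\chi_{C_i}\subsetneq\chi_C$, whence condition (c) applied to $\chi_{C_i}$ yields the contradiction. The only cosmetic difference is that you extract properness via the identity $C'=\{\delta:\delta\subseteq\chi_{C'}\}$ rather than picking $\delta_i\in C\setminus C_i$ directly, which is an equivalent use of the same closure formula.
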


\noindent The proof of the following Proposition is the same as that of \cite[Proposition 4.4]{B}.

\begin{proposition}\label{irreducible}
Let $\Lambda$ be a row-finite strongly aperiodic $k$-graph with no sources. Then $\chi_{\Lambda}$ has the property that every irreducible set is the closure of a point in $\chi_{\Lambda}$.
\end{proposition}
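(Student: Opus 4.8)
The plan is to show that an irreducible subset of $\chi_\Lambda$ is the closure of the single point obtained by taking the union of all the maximal tails it contains. First I would reduce to the case of a \emph{closed} irreducible set: since the closure of an irreducible set is again irreducible and has the same closure, it suffices to produce, for each irreducible closed $C\subseteq\chi_\Lambda$, a point $\gamma\in\chi_\Lambda$ with $C=\overline{\{\gamma\}}$. Given such a $C$, set $\chi_C=\bigcup_{\chi\in C}\chi$ as in the preceding Lemma. That Lemma immediately supplies what we need: because $C$ is irreducible, $\chi_C$ is a maximal tail, hence a genuine point of $\chi_\Lambda$, and moreover $\chi_C\in S(v)$ if and only if $S(v)\cap C\neq\emptyset$ for every $v\in\Lambda^0$. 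My claim is then that $C=\overline{\{\chi_C\}}$.

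For the inclusion $C\subseteq\overline{\{\chi_C\}}$ I would just unwind the closure formula of Theorem~\ref{topMT}: applying it with $S=\{\chi_C\}$ gives $\overline{\{\chi_C\}}=\{\delta\in\chi_\Lambda:\delta\subseteq\chi_C\}$, and any $\delta\in C$ certainly satisfies $\delta\subseteq\bigcup_{\chi\in C}\chi=\chi_C$. This direction is routine.

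The substantive direction is $\overline{\{\chi_C\}}\subseteq C$, and here I would pass to the equivalent topology $\mathcal{T}_1$ of Lemma~\ref{topMT2}, whose basic open sets are the $S(v)$. Let $\delta\in\overline{\{\chi_C\}}$, so $\delta\subseteq\chi_C$. Since $C$ is closed it equals its own closure, so it is enough to check that every basic neighbourhood $S(v)$ of $\delta$ meets $C$. But $\delta\in S(v)$ means $v\in\delta\subseteq\chi_C$, whence $\chi_C\in S(v)$; the final assertion of the preceding Lemma then forces $S(v)\cap C\neq\emptyset$. Thus every basic open set containing $\delta$ meets $C$, so $\delta$ lies in the closure of $C$, i.e.\ $\delta\in C$. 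This completes $C=\overline{\{\chi_C\}}$, and uniqueness of the point is automatic since the topology is $T_0$.

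The step I expect to be the real content is this reverse inclusion: it is exactly where the ``moreover'' clause of the preceding characterization of irreducible closed sets is used, and where one must be careful to interpret closure through the base $\{S(v)\}$ rather than through the union description of $\mathcal{T}_0$ — the equality $\mathcal{T}_0=\mathcal{T}_1$ from Lemma~\ref{topMT2} is what licenses this switch.
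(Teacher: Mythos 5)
Your proposal is correct and follows essentially the same route as the paper, which proves this Proposition by deferring to Bates' Proposition 4.4: there, as in your argument, one uses the preceding Lemma to see that $\chi_C=\bigcup_{\chi\in C}\chi$ is a maximal tail, gets $C\subseteq\overline{\{\chi_C\}}$ from the closure formula of Theorem~\ref{topMT}, and gets the reverse inclusion by testing basic neighbourhoods $S(v)$ of a point $\delta\subseteq\chi_C$ against the Lemma's final assertion, the switch between the two descriptions of the topology being justified by Lemma~\ref{topMT2}. Nothing is missing; your identification of the reverse inclusion as the step where the ``moreover'' clause is needed matches the structure of the cited proof exactly.
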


\noindent
Proposition \ref{irreducible} shows that $\chi_\Lambda$ is a spectral space. We now
turn our attention to finding a base of compact open sets using the topology described in Lemma~\ref{topMT2}.

We prove a technical lemma first. It shows that every vertex in a maximal tail can be reached by a path of strictly positive degree from some
other vertex in the same maximal tail.

\begin{lemma}\label{degree}
Let $\Lambda$ be a row-finite $k$-graph with no sources. Let $\chi \in\chi_{\Lambda}$ be a maximal tail, then for all $v \in \chi$ there is $\mu\in v\Lambda$ such that $d(\mu)>0$ and $s(\mu)\in\chi$. (i.e. $d(\mu)_i>0$ for $i=1, \dots, k$).
\end{lemma}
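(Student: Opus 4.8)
The plan is to construct $\mu$ one coordinate direction at a time, using condition (\ref{two}) of Definition~\ref{Def-MT} at each stage to guarantee that we remain inside $\chi$. The point is that condition (\ref{two}) supplies, for \emph{every} vertex of $\chi$ and \emph{every} direction $e_i$, an edge of degree $e_i$ whose source again lies in $\chi$; so we can apply it $k$ times in succession, once for each $i\in\{1,\dots,k\}$, and then splice the resulting edges together into a single path.

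In detail, set $v_0=v$. As $v_0\in\chi$, condition (\ref{two}) with $i=1$ produces an edge $f_1\in v_0\Lambda^{e_1}$ with $s(f_1)=:v_1\in\chi$. Inductively, given $v_{i-1}\in\chi$, condition (\ref{two}) with index $i$ yields $f_i\in v_{i-1}\Lambda^{e_i}$ with $s(f_i)=:v_i\in\chi$. After $k$ steps we obtain vertices $v_0,v_1,\dots,v_k\in\chi$ and edges $f_i\in v_{i-1}\Lambda^{e_i}$ for $1\le i\le k$.

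Because $r(f_i)=v_{i-1}=s(f_{i-1})$ for each $i$, the edges are composable, so we may set $\mu=f_1f_2\cdots f_k\in v\Lambda$. Then $r(\mu)=v_0=v$ and $s(\mu)=v_k\in\chi$, while functoriality of $d$ gives $d(\mu)=\sum_{i=1}^k d(f_i)=\sum_{i=1}^k e_i=(1,\dots,1)$, so that $d(\mu)_i=1>0$ for every $i$, which is exactly the path required. There is really no obstacle here beyond tracking that condition (\ref{two}) keeps each intermediate source $v_i$ inside $\chi$ — this is precisely what allows the iteration to run through all $k$ directions — together with the elementary facts that the $f_i$ are composable and that the degree is additive.
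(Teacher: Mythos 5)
Your proof is correct and is essentially identical to the paper's own argument: both iterate condition (b) of Definition~\ref{Def-MT} through the directions $e_1,\dots,e_k$ in turn, composing the resulting edges into a path of degree $(1,\dots,1)$ whose source stays in $\chi$. Nothing further is needed.
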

\begin{proof}
Fix $v\in\chi$. Since $\chi$ is a maximal tail, there is $\lambda\in v\Lambda^{e_i}$ such that $s(\lambda)\in\chi$ for all $i=1,\dots,k$. So let $\lambda_1\in v\Lambda^{e_i}$ such that $s(\lambda_1)\in\chi$. Let $u_1=s(\lambda_1)$. Then, let $\lambda_2\in u_1\Lambda^{e_2}$ such that $s(\lambda_2)\in\chi$. Continuing this process, we find $\lambda_i\in\Lambda^{e_i}$ such that $s(\lambda_i)\in\chi$ and $r(\lambda_{i+1})=s(\lambda_1)$ for all $i=1,\dots,k$. Let $\mu=\lambda_1\dots\lambda_k\in v\Lambda$. Then the factorization property implies $d(\mu)_i>0$ for all $i=1,\dots,k$.
\end{proof}

\noindent The following Proposition is a generalization of \cite[Lemma 4.5]{B}, whose proof is much more intricate due to the complex topology of
$\Lambda^\infty$ and the new condition (\ref{two}) in the definition of the maximal tail of a $k$-graph.

\begin{proposition}\label{basis}
Let $\Lambda$ be a row-finite strongly aperiodic $k$-graph with no sources. Define a map $\beta:{\Lambda}^{\infty}\rightarrow {\Lambda}^0$ by
\[
\beta(x)=\{v\in{\Lambda}^0:v\Lambda x(n,n)\ne\emptyset\;\;\text{for some $n\in\mathbb{N}^k$}\}.
\]

\noindent
Then
\begin{enumerate}[(A)] \item $\beta(x)$ is a maximal tail.
\item $\beta$ is continuous open surjection and $\chi_{\lambda}$ is a quotient space of ${\Lambda}^{\infty}$.
\item The open sets $\{S(v):v\in{\Lambda}^0\}$ are compact subsets of $\chi_{\Lambda}$.
\end{enumerate}
\end{proposition}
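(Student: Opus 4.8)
The plan is to treat the three parts in order, with the understanding that a single inductive construction of an infinite path will serve both halves of (B), and that (C) is then a one-line consequence of (B). For (A), I would verify the three defining conditions of Definition~\ref{Def-MT} directly. Nonemptiness is immediate since $x(0,0)\in\beta(x)$. Condition (c) is a composition argument: if $w\in\beta(x)$ is witnessed by $\tau\in w\Lambda x(n,n)$ and $v\Lambda w\neq\emptyset$ via some $\kappa$, then $\kappa\tau\in v\Lambda x(n,n)$, so $v\in\beta(x)$. For condition (a), I take $v_1,v_2\in\beta(x)$ witnessed at positions $n_1,n_2$ and set $w=x(n_1\vee n_2,n_1\vee n_2)$, which trivially lies in $\beta(x)$ and dominates both $x(n_1,n_1)$ and $x(n_2,n_2)$, hence both $v_1$ and $v_2$. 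For condition (b), given $v\in\beta(x)$ via $\mu\in v\Lambda x(n,n)$ and a direction $e_i$, I append the edge $x(n,n+e_i)$ to $\mu$ and factor the composite as $e\,\mu'$ with $d(e)=e_i$; then $\mu'\in s(e)\Lambda x(n+e_i,n+e_i)$ exhibits $s(e)\le x(n+e_i,n+e_i)$, so $e\in v\Lambda^{e_i}$ has $s(e)\in\beta(x)$.

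The heart of the argument is a construction which, given a maximal tail $\chi$ and a ``seed'' path, produces $x\in\Lambda^{\infty}$ with $\beta(x)=\chi$ that begins with the seed. Since $\Lambda^0$ is countable I would enumerate $\chi=\{w_1,w_2,\dots\}$ and build an increasing sequence of finite paths $\mu_0,\mu_1,\dots$, each extending the previous, by alternating two moves: a \emph{capture} move that, using condition (a) of Definition~\ref{Def-MT}, extends the current path to a vertex of $\chi$ dominating both the current source and the next $w_j$; and a \emph{growth} move that, using Lemma~\ref{degree}, extends by a path of strictly positive degree in every coordinate while keeping the source in $\chi$. The degrees then diverge in all $k$ coordinates, so the $\mu_j$ assemble into a genuine morphism $x\colon\Omega_k\to\Lambda$. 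The key bookkeeping point is that every vertex lying on a path whose source is in $\chi$ is itself in $\chi$ by condition (c); hence all vertices of $x$ lie in $\chi$ and $\beta(x)\subseteq\chi$, while the capture moves force each $w_j\le s(\mu_j)$, giving $\chi\subseteq\beta(x)$.

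With this in hand, surjectivity is the case of the trivial seed, and openness reduces to the identity $\beta(Z(\lambda))=S(s(\lambda))$. The inclusion $\subseteq$ holds because $s(\lambda)\in\beta(x)$ for every $x\in Z(\lambda)$; for $\supseteq$ I run the construction with seed $\lambda$ against any $\chi\ni s(\lambda)$, noting that all vertices of $\lambda$ already lie in $\chi$ by condition (c). Since the $Z(\lambda)$ form a base for $\Lambda^{\infty}$ and the $S(v)$ form the base for $\mathcal{T}_1$ of Lemma~\ref{topMT2}, continuity follows from the observation that if $v\in\beta(x)$ is witnessed at position $n$ then $Z(x(0,n))\subseteq\beta^{-1}(S(v))$, so $\beta^{-1}(S(v))$ is open; and openness follows from $\beta(Z(\lambda))=S(s(\lambda))$. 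A continuous open surjection is a quotient map, which yields the final clause of (B). For (C), taking $\lambda=v$ gives $S(v)=\beta(Z(v))$, and since the basic sets $Z(\lambda)$ are compact open, $S(v)$ is the continuous image of the compact set $Z(v)$, hence compact.

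I expect the main obstacle to be the infinite-path construction underlying (B): one must simultaneously keep every interior vertex of the path inside $\chi$, achieve cofinality so as to recover all of $\chi$, and drive the degree to infinity in each of the $k$ coordinates so that $x$ is defined on all of $\Omega_k$. Correctly interleaving the capture and growth moves, and checking via condition (c) that the interior vertices never escape $\chi$, is exactly where the new condition (b) of Definition~\ref{Def-MT}—through Lemma~\ref{degree}—does its work, and where the complexity of $\Lambda^{\infty}$ for $k>1$ makes the bookkeeping more delicate than in the $1$-graph case.
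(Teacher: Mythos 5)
Your proposal is correct and follows essentially the same route as the paper: the same verification of the maximal-tail axioms in (A), the same identities $\beta^{-1}(S(v))=\bigcup_{v\le s(\mu)}Z(\mu)$ and $\beta(Z(\mu))=S(s(\mu))$ for continuity and openness, and the same compactness argument via the compact cylinders $Z(v)$ in (C). The only real difference is organizational: your single seeded capture/growth induction treats finite and infinite maximal tails uniformly and directly produces the path rooted at a prescribed vertex that openness requires, whereas the paper splits surjectivity into a finite case (an eventually periodic path around a constructed loop) and an infinite case, relegating the rooted refinement to a footnote.
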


\begin{proof}
(A): To check the condition (a) of maximal tail, for $v_1,v_2\in\beta(x)$, we want to find $z\in\beta(x)$ such that $v_1\le z$ and $v_2\le z$. Since $v_1\in\beta(x)$, there is $n_1\in\mathbb{N}^k$ such that $v_1\le x(n_1,n_1)$. Similarly, there is $n_2\in\mathbb{N}^k$ such that $v_2\le x(n_2,n_2)$. Let $N=\text{max}(n_1,n_2)$, and let $z=x(N,N)$. Then, $v_1\le x(n_1,n_1)\le x(N,N)=z$ and $v_2\le x(n_2,n_2)\le x(N,N)=z$. Thus, the condition (a) is satisfied.
Now fix $v\in\beta(x)$, then there is $n\in\mathbb{N}^k$ such that $v\le x(n,n)$. Let $\mu\in v\Lambda x(n,n)$. Since $x\in{\Lambda}^{\infty}$, there is $x(n,n+e_i)\in x(n,n)\Lambda x(n+e_i,n+e_i)$ for all $1\le i\le k$. Then the factorization property implies that there is $\lambda_i\in v{\Lambda}^{e_i}$ such that $\mu x(n,n+e_i)=\lambda_i{\mu}'$. Since $s({\mu}')=s(x(n,n+e_i))=x(n+e_i,n+e_i)$ and $s(\lambda_i)\le s({\mu}')$, $s(\lambda_i)\in\beta(x)$. Thus, the condition (b) is satisfied.
To show that the condition (c) is satisfied, suppose $v\le w$ and $w\in\beta(x)$. Then, there is $n\in\mathbb{N}^k$ such that $w\le x(n,n)$. So $v\le w\le x(n,n)$ implies that $v\in\beta(x)$. Hence, $\beta(x)$ is a maximal tail.

(B): To show that $\beta$ is continuous, we claim
\begin{equation}\label{conti}
\begin{split}
\beta^{-1}(S(v))&=\{x\in{\Lambda}^{\infty}: v\le x(n,n)\;\;\text{for some}\;\;n\in\mathbb{N}^k\}\\
&=\bigcup_{v\le s(\mu)}Z(\mu).
\end{split}
\end{equation}
To show the first equality, let $x\in{\Lambda}^{\infty}$ be such that $v\le x(n,n)$ for some $n\in\mathbb{N}^k$. Then $v\in\beta(x)$. Since $\beta(x)$ is a maximal tail, we have $\beta(x)\in S(v)$. On the other hand, let $x\in{\Lambda}^{\infty}$ such that $\beta(x)\in S(v)$. So $v\in\beta(x)$ and it gives $n\in\mathbb{N}^k$ such that $v\le x(n,n)$. To show the second equality, take $x\in Z(\mu)$ such that $v\le s(\mu)$. Then $x=\mu t$ for $t\in{\Lambda}^{\infty}$. Clearly $s(\mu)=x(n,n)$ for some $n\in\mathbb{N}^k$. Thus, $v\le x(n,n)$. So $x$ belongs to the set. Now take $x\in{\Lambda}^{\infty}$ such that $v\le x(n,n)$ for some $n\in\mathbb{N}^k$. Let $\mu=x(0,n)$. Then $x\in Z(\mu)$ and $v\le x(n,n)=s(\mu)$. Thus, $x\in\bigcup_{v\le s(\mu)}Z(\mu)$, which proves the above equalities.
Since $Z(\mu)$ is open set, $\beta^{-1}(S(v))$ is open. Thus, $\beta$ is continuous.

To show that $\beta$ is surjective, we need to consider two cases : $|\chi|<\infty$ and $|\chi|=\infty$.
If $|\chi|<\infty$, then let $\chi=\{u_1,\dots,u_n\}$. In the case that $n=1$, we apply Lemma \ref{degree} to obtain $\lambda\in u_1\Lambda$ such that $d(\lambda)>0$ and $s(\lambda)\in\chi$. So $s(\lambda)=u_1$. Let $x$ be an infinite path of the form $x=\mu\mu\dots$, then $\beta(x)=\chi$.
In the case that $n>1$, let $p_1=u_1, u_2\in\chi$. By applying the maximal tail condition (a) we have $p_2\in\chi$ such that $u_1\Lambda p_2\ne\emptyset$ and $u_2\Lambda p_2\ne\emptyset$. Let $\mu_1\in u_1\Lambda p_2=p_1\Lambda p_2$. By similar argument, we can find $p_i\in\chi$ and $\mu_i\in p_i\Lambda p_{i+1}$ for $i=1,\dots,n-1$ such that $u_i\le p_i$ for $i=1,\dots,n$. Let $\delta_i\in u_i\Lambda p_i$. Then apply the maximal tail condition $(b)$ on $p_n$, we find $\lambda\in p_n{\Lambda}^{e_i}$ for all $i=1,\dots,k$ such that $s(\lambda)\in\chi$. Since $\chi$ is finite, $s(\lambda)=u_l$ for some $1\le l\le n$. So $\phi=\lambda\delta_{l+1}\mu_{l+1}\dots\mu_n$ is a loop based on $p_n$ since $s(\phi)=s(\mu_n)=p_n$ and $r(\phi)=r(\lambda)=p_n$. Let $x=(\phi)^{\infty}$. Then we certainly have $\beta(x)=\chi$.

If $|\chi|=\infty$, we write $\chi=\{v_i\}^{\infty}_{i=1}$. Let $p_1=v_1$, the the maximal tail condition (a) gives $p_2\in\chi$ such that $v_1\Lambda p_2\ne\emptyset$ and $v_2\Lambda p_2\ne\emptyset$. Let $\mu_1\in v_1\Lambda p_2=p_1\Lambda p_2$. Then by Lemma \ref{degree}, there is $\rho_1\in p_2\Lambda$ such that $d(\rho_1)>0$ and $s(\rho_1)\in\chi$. Let $q_1=s(\rho_1)$ and $\lambda_1=\mu_1\rho_1$. Then certainly $d(\lambda_1)>0$. Apply the similar argument to $v_3$ and $q_1$ to obtain $\lambda_2\in s(\lambda_1)\Lambda$ such that $d(\lambda_2)>0$ and $s(\lambda_2)\in\chi$. Then $\{v_1,v_2,v_3\}\le(\lambda_1\lambda_2)^0$, where $(\lambda_1\lambda_2)^0$ is the set of vertices on $\lambda_1\lambda_2$. So inductively we can find $\lambda_1,\dots,\lambda_n$ such that $\{v_1,\dots,v_{n+1}\}\le(\lambda_1\dots\lambda_n)^0$. Let $x=\lambda_1\lambda_2\dots$, then $\beta(x)=\chi$. Hence, $\beta$ is surjective.\footnote{
The above proof suggests that when $|\chi|=\infty$, for given $v\in\chi$ we can construct an infinite path $x$ such that $\beta(x)=\chi$ and $r(x)=v$.}

To see that $\beta$ is an open map, we claim the following.
\begin{equation}\label{open}
\beta(Z(\mu))=\{\chi:s(\mu)\in\chi\}.
\end{equation}
Let $x=\mu t\in Z(\mu)$. Then, $s(\mu)\in\beta(x)$. Since $\beta(x)$ is a maximal tail, $\beta(x)\in \{\chi:s(\mu)\in\chi\}$. Now take $\delta\in\{\chi:s(\mu)\in\chi\}$. Then by the condition (c) of maximal tail, for $s(\mu)\in\delta$ we have $r(\mu)\in\delta$. As shown in the proof of the surjectivity of $\beta$, for $s(\mu)\in\delta$, there is $x\in s(\mu){\Lambda}^{\infty}$ such that $\beta(x)=\delta$. Then $\mu x\in Z(\mu)$ and $\beta(\mu x)=\delta$, which proves the claim. Thus, we have $\beta(Z(\mu))=\{\chi:s(\mu)\in\chi\}=S(s(\mu))$, which is open in $\chi_{\Lambda}$. Therefore, $\beta$ is an open map.

(C): For $v\in{\Lambda}^0$, the subset $Z(v)\subset{\Lambda}^{\infty}$ is compact by Lemma 2.6 in \cite{KP1}. By \eqref{open} above, we have
\[S(v)=\{\chi\in\chi_{\Lambda}:v\in\chi\}=\beta(Z(v)).\]
Since $\beta$ is continuous, $\beta(Z(v))$ is compact, which implies that $S(v)$ is compact.

\end{proof}

\begin{proof}[Proof of Theorem \ref{SS}]
Let $X$ be a spectral space in which the compact open sets form a countable base. Then by \cite[Theorem 5]{BE}, there is an AF-algebra $\mathcal{A}$ such that Prim $\mathcal{A}\simeq X$. Moreover by \cite[Theorem 1]{D}, $\mathcal{A}$ is Morita equivalent to $C^{\ast}(\mathcal{D}(\mathcal{A}))$ where $\mathcal{D}(\mathcal{A})$ is a Bratteli diagram for $\mathcal{A}$. Hence, by \cite[Corollary 3.33]{RW}, we have Prim $C^{\ast}(\mathcal{D}(\mathcal{A}))\simeq \text{Prim }\mathcal{A}$. Moreover, a Bratteli diagram satisfies condition (K), hence it is strongly aperiodic by Lemma~\ref{lem:KisSA}.

To prove the converse, let $\Lambda$ be a row-finite strongly aperiodic $k$-graph with no sources. Then by Theorem \ref{topMT} we have Prim $C^{\ast}(\Lambda)\simeq\chi_{\Lambda}$. It follows from Proposition \ref{irreducible} that $\chi_{\Lambda}$ is a spectral space, and  by Lemma \ref{basis}, the set $\{S(v):v\in{\Lambda}^0\}$ forms a countable base of compact open sets for the topology on $\chi_{\Lambda}$.
\end{proof}

\noindent As in \cite[Corollary 4.6]{B}, we may apply \cite[Theorem 5]{BE} to Theorem~\ref{SS} to get:

\begin{corollary}\label{AF}
If $\Lambda$ is a row-finite strongly aperiodic $k$-graph with no sources, then there is an $AF$-algebra $\mathcal{A}$ such that $\text{Prim}\;\mathcal{A}\simeq\text{Prim}\;C^{\ast}(\Lambda)$.
\end{corollary}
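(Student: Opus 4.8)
The plan is to combine the forward direction of Theorem~\ref{SS} with the Bratteli--Elliott realization theorem \cite[Theorem 5]{BE}; the corollary is essentially a formal consequence of machinery already in place. First I would invoke Theorem~\ref{SS} in the direction that starts from the $k$-graph: given a row-finite strongly aperiodic $k$-graph $\Lambda$ with no sources, the converse half of that theorem (established via Theorem~\ref{topMT}, Proposition~\ref{irreducible}, and Proposition~\ref{basis}) shows that $X := \operatorname{Prim} C^{\ast}(\Lambda)$ is a spectral space whose compact open sets form a countable base. This is exactly the hypothesis demanded by \cite[Theorem 5]{BE}.

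Next I would apply \cite[Theorem 5]{BE} to this space $X$. That theorem produces, for any spectral space in which the compact open sets form a countable base, an AF-algebra $\mathcal{A}$ with $\operatorname{Prim} \mathcal{A}$ homeomorphic to the given space. Specializing to $X = \operatorname{Prim} C^{\ast}(\Lambda)$ yields an AF-algebra $\mathcal{A}$ with $\operatorname{Prim} \mathcal{A} \simeq \operatorname{Prim} C^{\ast}(\Lambda)$, which is precisely the assertion.

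Since both ingredients are already available, there is no genuine obstacle in the argument itself; the real content sits in Theorem~\ref{SS}. The only point worth flagging is that it is the realization half of \cite[Theorem 5]{BE}—building an AF-algebra out of a spectral space with a countable base of compact open sets—that is being used here, rather than the structural half characterizing which spaces arise as primitive ideal spaces of AF-algebras. I would deliberately \emph{not} attempt to construct $\mathcal{A}$ explicitly from the $1$-skeleton of $\Lambda$, since, as noted in the introduction, no such algorithm is known in the $k$-graph setting (in contrast with \cite[\S 5]{B}); the existence statement is all that Theorem~\ref{SS} and \cite[Theorem 5]{BE} together deliver.
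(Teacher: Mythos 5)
Your proposal is correct and is exactly the paper's argument: the paper derives the corollary by applying \cite[Theorem 5]{BE} to the half of Theorem~\ref{SS} showing that $\operatorname{Prim} C^{\ast}(\Lambda)$ is a spectral space with a countable base of compact open sets (via Theorem~\ref{topMT}, Proposition~\ref{irreducible}, and Proposition~\ref{basis}). Your remark that no explicit construction of $\mathcal{A}$ from the $1$-skeleton is attempted also matches the paper's own comment following the corollary.
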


\begin{remark}
In \cite[\S 5]{B}, Bates gives an algorithm to construct from a directed graph $E$ an auxiliary directed graph $\widetilde{E}$ whose $C^*$-algebra is AF and has the same primitive ideal space $C^* (E)$.  Since we are dealing with $k$-graph $\Lambda$ which has $k$-colored $1$-skeleton, Bates' construction does not work as expected. The main idea of her construction in \cite{B} is that there is one-to-one correspondence between maximal tails in $E$ and maximal tails in the auxiliary graph $\widetilde{E}$. When we apply the same construction of Bates' auxiliary graph to $k$-graph, we can only show that the map from $\Lambda$ to $\widetilde{\Lambda}$ takes the maximal tail of $k$-graph $\Lambda$ to the maximal tail of 1-graph $\widetilde{\Lambda}$ but not the other way. If we start with a maximal tail of 1-graph to construct a maximal tail of $k$-graph, the corresponding set of vertices no longer satisfy the conditions of maximal tail, in particular the condition (b). Thus, we need a different construction of auxiliary graph for $k$-graph that represents $AF$-algebra, and currently we do not know any construction that works.

\end{remark}

\section{Cartesian product and skew product examples}

In this section we describe two strongly aperiodic $2$-graphs with no sources, one a skew product graph which is not a cartesian product graph and the other a cartesian product graph. We compute the primitive ideal spaces of their associated $C^*$-algebras and show that they are homeomorphic even though the $2$-graphs are not isomorphic. The common feature that the examples have is that they have the same $1$-skeleton, which we now describe.

A $k$-graph $\Lambda$ can be visualized by its $1$-skeleton: This is a directed graph $E_\Lambda$ with vertices $\Lambda^0$
and edges $\cup_{i=1}^k \Lambda^{e_i}$ which have range and source in $E_\Lambda$ determined by their range and source in $\Lambda$.
Each edge in $E_\Lambda$ with degree $e_i$ is assigned the same color $c_i$, so $E_\Lambda$ is a colored graph. It is common to
call edges with degree $e_1$ in $\Lambda$ blue edges in $E_\Lambda$ and draw them with solid lines; edges with degree $e_2$
in $\Lambda$ are then called red edges and are drawn as dashed lines. Different $k$-graphs can determine the same $1$-skeleton since
the skeleton does not encode the factorization property of the $k$-graph. In practice, along with the $1$-skeleton we give a
collection of factorization rules which relate the edges of $E_\Lambda$ that occur in the factorization of morphisms of degree
$e_i+e_j$ ($i \neq j$) in $\Lambda$. For more information about $1$-skeletons and their relationship with $k$-graphs we refer
the reader to \cite{RSY1}.

Before we give an example, we introduce the cartesian product graphs and skew-product graphs.

\begin{proposition}\cite[Proposition 1.8]{KP1}.\label{kgraph}
Let $(\Lambda_i, d_i)$  be $k_i$-graphs for $i=1,2$, then the cartesian product graph $(\Lambda_1\times\Lambda_2, d_1\times d_2)$ is a  $(k_1+k_2)$-graph where $\Lambda_1\times\Lambda_2$ is the product category and $d_1\times d_2:\Lambda_1\times\Lambda_2\rightarrow \mathbb{N}^{k_1+k_2}$ is given by $d_1\times d_2(\lambda_1,\lambda_2)=(d_1(\lambda_1),d_2(\lambda_2))\in\mathbb{N}^{k_1}\times\mathbb{N}^{k_2}$ for $\lambda_1\in\Lambda_1$ and $\lambda_2\in\Lambda_2$.
\end{proposition}

When we identify $\mathbb{N}^{k_1+k_2}$ with $\mathbb{N}^{k_1} \times \mathbb{N}^{k_2}$ in the above Proposition we may suppose that $\mathbb{N}^{k_1+k_2}$ has a basis $\{ e_1 , \ldots , e_{k_1} , f_1 , \ldots f_{k_2} \}$ where $\{ e_1 , \ldots , e_{k_1} \}$ and $\{ f_{1} , \ldots , f_{k_2} \}$ are the canonical bases for $\mathbb{N}^{k_1}$ and $\mathbb{N}^{k_2}$ respectively. Then the factorization rule for elements of degree $(e_i + f_{j})$ in $\Lambda_1 \times \Lambda_2$ are of the form
\begin{equation} \label{eq:fpforcart}
(  r(a) , b ) ( a , s(b)  ) = ( a , r(b) ) ( s(a) , b ) \text{ where }  b \in \Lambda_1^{e_i} , a \in \Lambda_2^{f_{j}}.
\end{equation}

\begin{remarks} \label{rem:whencart}
\begin{enumerate}[(a)]
\item Suppose that a $2$-graph $\Lambda$ has edges $b \in \Lambda^{e_1} , a \in \Lambda^{e_2}$ such that $r(a)=s(a)=r(b)=s(b)$.
If $\Lambda$ is a cartesian product of $1$-graphs then by \eqref{eq:fpforcart} we must have $ab=ba$.

\item By (a) it follows that for $m,n \ge 1$ the $2$-graph $\mathbb{F}^2_\theta$ described in Examples~\ref{ex:kex} (3) is the cartesian product $B_m \times B_n$ if and only if $\theta$ is the identity function.
\end{enumerate}
\end{remarks}

\begin{theorem}\label{prod}
Let $\Lambda_i$ be $k_i$-graphs for $i=1,2$. Then
\begin{enumerate}[(a)]
\item $\Lambda_1\times\Lambda_2$ is a row-finite  $(k_1+k_2)$-graph with no sources if and only if $\Lambda_i$ is a row-finite $k_i$-graph with no sources for $i=1,2$; 
\item $\Lambda_1 \times \Lambda_2$ is aperiodic if and only if $\Lambda_1 , \Lambda_2$ are aperiodic; and
\item $\Lambda_1 \times \Lambda_2$ is strongly aperiodic if and only if $\Lambda_1 , \Lambda_2$ are strongly aperiodic.
\end{enumerate}
\end{theorem}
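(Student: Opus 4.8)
The plan is to establish the three parts in order, using part~(b) as the workhorse for part~(c). For part~(a) I use that a morphism of $\Lambda_1\times\Lambda_2$ is a pair $(\lambda_1,\lambda_2)$ with degree $(d_1(\lambda_1),d_2(\lambda_2))$, so that $(v_1,v_2)(\Lambda_1\times\Lambda_2)^{(p,q)}=v_1\Lambda_1^p\times v_2\Lambda_2^q$; row-finiteness follows because a product of sets is finite exactly when both factors are (take $q=0$ and then $p=0$ to recover the factors), and the no-sources condition follows because the generators of $\mathbb{N}^{k_1+k_2}$ are the $(e_i,0)$ and the $(0,f_j)$, with $(v_1,v_2)(\Lambda_1\times\Lambda_2)^{(e_i,0)}=v_1\Lambda_1^{e_i}\times\{v_2\}$ and symmetrically. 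For part~(b) the crucial point is that factorization in the product is coordinatewise, so $(\lambda_1,\lambda_2)(a,b)=(\lambda_1(a',b'),\lambda_2(a'',b''))$ for $a=(a',a'')\le b=(b',b'')$; hence inequality~\eqref{eq:lp} for the product holds as soon as it holds in one coordinate. For the forward direction I fix $v_1$ and $m'\ne n'$, pick any $v_2$, and apply aperiodicity of the product at $(v_1,v_2)$ to the pair $((m',0),(n',0))$: the two subpaths agree in their second coordinate, so the witness must occur in the first, giving no local periodicity at $v_1$. Conversely, given $(v_1,v_2)$ and $m\ne n$ with, say, $m'\ne n'$, I pair a periodicity-breaking $\lambda_1\in v_1\Lambda_1$ (from aperiodicity of $\Lambda_1$) with any $\lambda_2\in v_2\Lambda_2$ of degree $\ge m''\vee n''$, which exists since $\Lambda_2$ has no sources.

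For the forward direction of part~(c), given a proper saturated hereditary $H_1\subsetneq\Lambda_1^0$, I would check that $H_1\times\Lambda_2^0$ is saturated and hereditary in the product: heredity uses $(v_1,v_2)\le(w_1,w_2)\iff v_1\le w_1$ and $v_2\le w_2$, while saturation splits over the two types of generators and, for the $(e_i,0)$ type, invokes saturation of $H_1$ (the $(0,f_j)$ type is automatic, since the first coordinate is unchanged). One then identifies $\Gamma((\Lambda_1\times\Lambda_2)\setminus(H_1\times\Lambda_2^0))=\Gamma(\Lambda_1\setminus H_1)\times\Lambda_2$. Since the product is strongly aperiodic, this graph is aperiodic, so part~(b) forces $\Gamma(\Lambda_1\setminus H_1)$ to be aperiodic; as $H_1$ was an arbitrary proper saturated hereditary set, $\Lambda_1$ is strongly aperiodic, and symmetrically so is $\Lambda_2$.

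The reverse direction of part~(c) is where the real work lies and is, I expect, the main obstacle: a general saturated hereditary $H\subsetneq(\Lambda_1\times\Lambda_2)^0$ need not be a union of rectangles, so there is no direct reduction to the factors. My plan is a slicing argument. Fix $(v_1,v_2)\notin H$ and $m\ne n$, and say $m'\ne n'$. The slice $A:=\{w_1\in\Lambda_1^0:(w_1,v_2)\in H\}$ is saturated and hereditary in $\Lambda_1^0$ --- both properties transfer from $H$ by examining $(w_1,v_2)$ together with its color-$(e_i,0)$ edges --- and it is proper since $v_1\notin A$. Strong aperiodicity of $\Lambda_1$ then supplies $\lambda_1\in v_1\Lambda_1$ with $w_1:=s(\lambda_1)\notin A$, with $d_1(\lambda_1)\ge m'\vee n'$, breaking periodicity in the first coordinate. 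Next, the slice $B:=\{w_2\in\Lambda_2^0:(w_1,w_2)\in H\}$ is likewise saturated hereditary and proper (as $(w_1,v_2)\notin H$ gives $v_2\notin B$); using only that $\Gamma(\Lambda_2\setminus B)$ has no sources, I extend to $\lambda_2\in v_2\Lambda_2$ with $s(\lambda_2)\notin B$ and $d_2(\lambda_2)\ge m''\vee n''$. Then $(\lambda_1,\lambda_2)$ has source $(w_1,s(\lambda_2))\notin H$, so --- since heredity of $H$ forces every vertex on a path with source off $H$ to lie off $H$ --- it is a path of $\Gamma((\Lambda_1\times\Lambda_2)\setminus H)$ of degree $\ge m\vee n$ whose first coordinate breaks periodicity, witnessing no local periodicity at $(v_1,v_2)$. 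As $(v_1,v_2)$ and $H$ were arbitrary, the product is strongly aperiodic. The points needing the most care are the saturation of the slices $A$ and $B$, where one must match Definition~\ref{def:sh} on the product against that on a factor, and the verification that every vertex traversed by $(\lambda_1,\lambda_2)$ remains outside $H$.
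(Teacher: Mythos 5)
Your proposal is correct, and for the hard direction of part (c) it takes a genuinely different --- and in fact sounder --- route than the paper. Parts (a), (b), and your ``product strongly aperiodic $\Rightarrow$ factors strongly aperiodic'' argument coincide with the paper's: coordinatewise factorization, projection onto the first coordinate for the converse of (b) (your version, which extracts the first-coordinate witness from an arbitrary witness path rather than asserting it has the special form $(\lambda_1,v_2)$ as the paper does, is the cleaner formulation), and the identification $\Gamma((\Lambda_1\times\Lambda_2)\setminus(H_1\times\Lambda_2^0))\cong\Gamma(\Lambda_1\setminus H_1)\times\Lambda_2$. The divergence is in proving that strong aperiodicity of the factors passes to the product. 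The paper disposes of this by asserting that \emph{every} saturated hereditary subset of $\Lambda_1^0\times\Lambda_2^0$ has the form $H_1\times H_2$ with each $H_i$ saturated hereditary, and then factoring the quotient graph; but that assertion is false in general, as the paper's own Examples~\ref{ex1} and~\ref{ex2} show: in $\Omega\times\Omega$ the set $(H_1\times\Omega^0)\cup(\Omega^0\times H_2)$ --- the union of a right half-plane and an upper half-plane, i.e.\ the complement of a maximal tail $LQ_{(a,b)}$ --- is saturated and hereditary but is not a product of subsets of the factors. Your slicing argument is exactly what is needed to close this gap: the slices $A=\{w_1:(w_1,v_2)\in H\}$ and $B=\{w_2:(w_1,w_2)\in H\}$ are proper saturated hereditary sets in the factors (heredity and saturation transfer by pairing edges in one coordinate with an identity morphism in the other, as you indicate), strong aperiodicity of $\Lambda_1$ applied to $\Gamma(\Lambda_1\setminus A)$ breaks periodicity in the first coordinate whenever $m'\ne n'$, and the no-sources property of $\Gamma(\Lambda_2\setminus B)$ (Theorem~\ref{ff}(d)) supplies the second coordinate; heredity of $H$ then guarantees the resulting pair is a path of $\Gamma((\Lambda_1\times\Lambda_2)\setminus H)$ whose factorizations agree with those in $\Lambda_1\times\Lambda_2$, so \eqref{eq:lp} holds because its first coordinate does. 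What the paper's route would buy is brevity --- an immediate reduction of (c) to (b) --- but it rests on a structural claim that fails; what your route buys is a correct proof, at the cost of the local bookkeeping with slices. One cosmetic point: ``need not be a union of rectangles'' should read ``need not be a rectangle,'' i.e.\ not of the form $H_1\times H_2$; unions of rectangles are exactly what do occur.
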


\begin{proof}
(a): Proposition \ref{kgraph} implies that $\Lambda_1\times\Lambda_2$ is a $(k_1+k_2)$-graph. It is straightforward to check that
$\Lambda_1 \times \Lambda_2$ is row-finite with no sources, and conversely.

(b): Suppose that $\Lambda_1 , \Lambda_2$ are aperiodic. Fix $(v,w)\in(\Lambda_1\times\Lambda_2)^0$ and $m\ne n\in\mathbb{N}^{k_1+k_2}$. Let $m'=(m_1,\dots,m_{k_1})$, $m''=(m_{k_1+1},\dots,m_{k_1+k_2})$ and $n'=(n_1,\dots,n_{k_1})$, $n''=(n_{k_1+1},\dots,n_{k_1+k_2})$.
Then $m'\ne n'$, or $m''\ne n''$ (or both). Assume that $m'\ne n'$. Since $\Lambda_1$ is aperiodic, for $v \in \Lambda_1^0$ there exists $\lambda_1\in v\Lambda_1$ such that $d(\lambda_1)>m' \vee n'$ and
\[\lambda_1(m',m'+d(\lambda_1)-m'\vee n')\ne\lambda_1(n',n'+d(\lambda_1)-m'\vee n').\]

\noindent
For $w \in\Lambda_2^0$, let $\lambda_2\in w\Lambda_2$ be any path with $d(\lambda_2)>m''\vee n''$. We claim that \[(\lambda_1,\lambda_2)(m,m+d_1\times d_2(\lambda_1,\lambda_2)-m\vee n)\ne(\lambda_1,\lambda_2)(n,n+d_1\times d_2(\lambda_1,\lambda_2)-m\vee n).
\]

\noindent Since the left hand side of the above equation is equal to 
\[
(\lambda_1(m',m'+d_1(\lambda_1)-m'\vee n'), \lambda_2(m'',m''+d_2(\lambda_2)-m''\vee n''))
\]

\noindent  by the definition of degree functor $d_1\times d_2$ on $\Lambda_1\times \Lambda_2$,  and, similarly the right hand side of the above equation is equal to $(\lambda_1(n',n'+d_1(\lambda_1)-m'\vee n'), \lambda_2(n'',n''+d_2(\lambda_2)-m''\vee n''))$. Since $\lambda_1(m',m'+d(\lambda_1)-m'\vee n')\ne\lambda_1(n',n'+d(\lambda_1)-m'\vee n')$ the claim follows. Therefore, $\Lambda_1\times\Lambda_2$ is aperiodic. If we assume that $m'' \ne n''$ then a similar argument, using the aperiodicity of $\Lambda_2$ shows that $\Lambda_1 \times \Lambda_2$ is aperiodic.

Now suppose that $\Lambda_1 \times \Lambda_2$ is aperiodic. First we identify $\Lambda_1$ with $\Lambda_1 \times \{v_2 \}$ for some $v_2 \in \Lambda_2^0$. Fix $v_1 \in \Lambda_1^0$, then we claim that $\Lambda_1$ has no local periodicity
at $v_1$. To see this, observe that $\Lambda_1 \times \Lambda_2$ has no local periodicity at $(v_1 , v_2 )$. So for $(m,0) \neq (n,0) \in \mathbb{N}^{k_1} \times \mathbb{N}^{k_2}$ there is a path $(\lambda_1 , v_2 ) \in (v_1 , v_2 ) ( \Lambda_1 \times \Lambda_2 )^{(m',0)}$ with $m' > m \vee n$ such that \eqref{eq:lp} in Definition~\ref{aperiodic} holds.  Identifying $( \lambda_1 , v_2 ) \in \Lambda_1 \times \{ v_2 \}$ with $\lambda_1 \in v_1 \Lambda_1$ proves the claim. A similar argument applies to $\Lambda_2$.

(c): Suppose that $\Lambda_1 , \Lambda_2$ are strongly aperiodic. Then it is straightforward to see that every saturated hereditary subset of $( \Lambda_1 \times \Lambda_2 )^0 = \Lambda_1^0 \times \Lambda_2^0$ is of the form $H_1 \times H_2$ where $H_i$ is a saturated hereditary subset of $\Lambda_i^0$ for $i=1,2$. If $H_i \neq \Lambda_i$ for $i=1,2$ then by Theorem~\ref{ff} (4) we have
\begin{align} \label{eq:quot}
\Gamma ( ( \Lambda_1 \times \Lambda_2 ) &\backslash ( H_1 \times H_2 ) ) \nonumber \\
&= ( ( \Lambda_1^0 \times \Lambda_2^0 ) \backslash ( H_1 \times H_2 ) , \{ ( \lambda_1 , \lambda_2 ) \in \Lambda_1 \times \Lambda_2 : s ( \lambda_1 , \lambda_2 ) \not\in H_1 \times H_2 \} ,r , s ) \nonumber \\
&=\Gamma ( \Lambda_1 \backslash H_1 )  \times \Gamma ( \Lambda_2 \backslash H_2 )
\end{align}

\noindent then since $\Lambda_1 ,\Lambda_2$ are strongly aperiodic it follows that
$\Gamma ( \Lambda_i \backslash H_i ) $ is aperiodic for $i=1,2$. Hence by \eqref{eq:quot} it follows that $\Gamma ( ( \Lambda_1 \times \Lambda_2 ) \backslash ( H_1 \times H_2 ) )$ is aperiodic by part (b). Now suppose $H_2 = \Lambda_2^0$ and $H_1 \neq \Lambda_1^0$ then $H_1 \times \Lambda_2^0 \subsetneq \Lambda_1^0 \times \Lambda_2^0$ and
\begin{align} \label{eq:halftriv}
\Gamma ( ( \Lambda_1 \times \Lambda_2 ) \backslash ( H_1 \times \Lambda_2^0 )) &= ( ( \Lambda_1^0 \times \Lambda_2^0 ) \backslash ( H_1 \times \Lambda_2^0 ) , \{ ( \lambda_1 , \lambda_2 ) : s ( \lambda_1 , \lambda_2 ) \not\in H_1 \times \Lambda_2^0 \} , r ,s ) \nonumber \\
&= ( ( \Lambda_1^0 \backslash H_1 ) \times \Lambda_2^0 , \{ ( \lambda_1 , \lambda_2 ) : s ( \lambda_1 ) \not\in H_1 \} , r , s )
\end{align}

\noindent which is isomorphic to $\Gamma ( \Lambda_1 \backslash H_1 ) \times \Lambda_2$.
Since $\Lambda_1$ is strongly aperiodic it follows that $\Gamma ( \Lambda_1 \backslash H_1 )$ and hence $\Gamma ( ( \Lambda_1 \times \Lambda_2 ) \backslash ( H_1 \times \Lambda_2^0 ) )$ is aperiodic by part (b). A similar argument applies if
$H_1 = \Lambda_1^0$ and $H_2 \neq \Lambda_2^0$. Hence $\Lambda_1 \times \Lambda_2$ is strongly aperiodic.

Finally if we assume that $\Lambda_1 \times \Lambda_2$ is strongly aperiodic and that $H_1 \subsetneq \Lambda_1^0$ is a saturated hereditary subset of $\Lambda_1$, then $H_1 \times \Lambda_2^0 \subsetneq \Lambda_1^0 \times \Lambda_2^0$ is a saturated hereditary subset of $\Lambda_1 \times \Lambda_2$. So by \eqref{eq:halftriv} we may identify $\Gamma ( ( \Lambda_1 \times \Lambda_2 ) \backslash ( H_1 \times \Lambda_2^0 ) )$ with $\Gamma( \Lambda_1^0  \backslash H_1 ) \times \Lambda_2$. By part (b) it follows that $\Gamma( \Lambda_1^0  \backslash H_1 )$ is aperiodic and since $H_1$ was arbitrary it follows that $\Lambda_1$ is strongly aperiodic. A similar argument shows that $\Lambda_2$ is strongly aperiodic.
\end{proof}

\begin{example}
Since $\Omega_k \cong \overbrace{\Omega_1 \times \ldots \times \Omega_1}^{k-\text{times}}$
it follows from Theorem~\ref{prod}~(a) and Examples~\ref{ex:aperiodex} that $\Omega_k$ is aperiodic.
\end{example}

\begin{definition}[Definition 5.1 in \cite{KP1}]
Let $G$ be a discrete group, $(\Lambda, d)$ a $k$-graph. Given $c:\Lambda\rightarrow G$ a functor, then define the \textit{skew product} graph $\Lambda\times_c G$ as follows : the objects are identified with ${\Lambda}^0\times G$ and the morphisms are identified with $\Lambda\times G$ with the following structure maps
$s(\lambda, g)=(s(\lambda), gc(\lambda))\;\;\text{and}\;\;r(\lambda, g)=(r(\lambda), g)$.
If $s(\lambda)=r(\mu)$, then $(\lambda,g)$ and $(\mu, gc(\lambda))$ are composable in $\Lambda\times_c G$ and
$(\lambda,g)(\mu, gc(\lambda))=(\lambda\mu, g)$.
The degree map is given by $d(\lambda,g)=d(\lambda)$.
\end{definition}

\begin{remark} It is straightforward to check that if $\Lambda$ is a row-finite $k$-graph with no sources, $\Lambda \times_c G$ is a row-finite $k$-graph with no sources. The factorization rule in $\Lambda \times_c G$ for elements of degree $e_i + e_j$, where $i \neq j$ is induced from the
factorization rule in $\Lambda$ as follows: If $d ( \lambda ) = e_i + e_j$ then $\lambda = \lambda ( 0 , e_i ) \lambda ( e_i ,d ( \lambda ) ) = \lambda ( 0 , e_j ) \lambda ( e_j , d ( \lambda ) )$ in $\Lambda$ and
\begin{equation} \label{eq:fpforspg}
( \lambda ( 0 , e_i ) , g ) ( \lambda ( e_i , d ( \lambda ) ) , g c ( \lambda ( 0 , e_i ) ) ) = ( \lambda ( 0 , e_j ) , g ) ( \lambda ( e_j , d ( \lambda ) ) , g c ( \lambda ( 0 , e_j ) ))
\end{equation}

\noindent in $\Lambda \times_c G$.
\end{remark}

\begin{example}\label{ex1}
Suppose that $\Gamma$ is the following 2-graph with the $1$-skeleton, shown below
\[
\begin{tikzpicture}[scale=0.35]
                    \node[circle,inner sep=0pt] (p11) at (1, 1)
                    {\begin{tikzpicture}[scale=0.4]
                    \node at (0.9, 0.8) [draw, fill=black] {$.$};
                     \end{tikzpicture}}
                            edge[-latex, loop, out=40, in=-40, min distance=80, looseness=1.5] (p11)
                            edge[-latex, loop, out=50, in=-50, min distance=123, looseness=2.2] (p11)
                            edge[-latex, loop, out=60, in=-60, min distance=190, looseness=2.3] (p11)
                            edge[-latex, loop, dashed, out=130, in=230, min distance=75, looseness=1.5] (p11)
                            edge[-latex, loop, dashed, out=125, in=235, min distance=115, looseness=1.8] (p11)
                            edge[-latex, loop, dashed, out=115, in=245, min distance=190, looseness=1.7] (p11)
                            ;
                            \node at (0.9, -0.5) {$v$};
                            \node at (-5, 1) {$g_1,g_2, g_3$};
                            \node at (7,1) {$f_1,f_2,f_3$};
                            \node at (-9, 1) {$\Gamma=$};
\end{tikzpicture}
\]

\noindent
and factorization rules
\begin{equation} \label{eq:fpforgamma}
\begin{array}{ll}
g_i f_1= f_2 g_j,  \quad g_j f_2 = f_1 g_j,  \quad g_j f_3 = f_3 g_j & \text{ for } j=1,3 \text{ and } \\
g_2 f_i = f_i g_2  \text{ where } f_i \in \Lambda^{e_1} \text{ and } g_j \in \Lambda^{e_2} & \text{ for } i,j=1,2,3.
\end{array}
\end{equation}

\noindent In fact $\Gamma = \mathbb{F}_\theta^2$ where $\theta : \underline{3} \times \underline{3} \to \underline{3} \times \underline{3}$
is given by
\[
\begin{array}{ll}
\theta ( 2,j ) = (1,j) , \quad \theta ( 1,j ) = (2,j) , \quad \theta (3,j) = (3,j) & \text{ for } j=1,3 \text{ and } \\
\theta ( i,2 ) = (i,2) \text{ for } i =1,2,3. &
\end{array}
\]


Define a map $c: \Gamma\rightarrow \mathbb{Z}^2$ by $c(g_3)=(0,1)$, $c(f_3)=(1,0)$, $c(f_i)=(0,0)$, $c(g_i)=(0,0)$, for $i=1,2$ then
$c$ preserves the factorization rules in $\Gamma$, thus it extends to a functor $c : \Gamma \to \mathbb{Z}^2$.
Then the $1$-skeleton of $\Lambda=\Gamma\times_c\mathbb{Z}^2$ is given as follows.
\[
\begin{tikzpicture}[scale=0.28]
                    \node[circle,inner sep=0pt] (p11) at (0, 0)
                    {\begin{tikzpicture}[scale=0.4]
                    \node at (0.9, 0.8) [draw, fill=black] {$.$};
                     \end{tikzpicture}}
                            edge[-latex, loop, out=170, in=110, min distance=60, looseness=2] (p11)
                            edge[-latex, loop, out=180, in=100, min distance=90, looseness=3] (p11)
                            edge[-latex, loop, out=280, in=340, min distance=60, dashed, looseness=2] (p11)
                            edge[-latex, loop, out=270, in=350, min distance=90, dashed, looseness=3] (p11)
                            ;
                    \node[circle,inner sep=0pt] (p12) at (0, 4)
                    {\begin{tikzpicture}[scale=0.4]
                    \node at (0.9, 0.8) [draw, fill=black] {$.$};
                     \end{tikzpicture}}
                            edge[-latex, loop, out=170, in=110, min distance=60, looseness=2] (p12)
                            edge[-latex, loop, out=180, in=100, min distance=90, looseness=3] (p12)
                            edge[-latex, loop, out=280, in=340, min distance=60, dashed, looseness=2] (p12)
                            edge[-latex, loop, out=270, in=350, min distance=90, dashed, looseness=3] (p12)
                            ;
                    \node[circle,inner sep=0pt] (p13) at (0, 8)
                    {\begin{tikzpicture}[scale=0.4]
                    \node at (0.9, 0.8) [draw, fill=black] {$.$};
                     \end{tikzpicture}}
                            edge[-latex, loop, out=170, in=110, min distance=60, looseness=2] (p13)
                            edge[-latex, loop, out=180, in=100, min distance=90, looseness=3] (p13)
                            edge[-latex, loop, out=280, in=340, min distance=60, dashed, looseness=2] (p13)
                            edge[-latex, loop, out=270, in=350, min distance=90, dashed, looseness=3] (p13)
                            ;
                    \node[circle,inner sep=0pt] (p14) at (0, 12)
                    {\begin{tikzpicture}[scale=0.4]
                    \node at (0.9, 0.8) [draw, fill=black] {$.$};
                     \end{tikzpicture}}
                            edge[-latex, loop, out=170, in=110, min distance=60, looseness=2] (p14)
                            edge[-latex, loop, out=180, in=100, min distance=90, looseness=3] (p14)
                            edge[-latex, loop, out=280, in=340, min distance=60, dashed, looseness=2] (p14)
                            edge[-latex, loop, out=270, in=350, min distance=90, dashed, looseness=3] (p14)
                            ;
                     \draw[style=semithick, dashed, -latex] (p12.south)--(p11.north);
                     \draw[style=semithick, dashed, -latex] (p13.south)--(p12.north);
                     \draw[style=semithick, dashed, -latex] (p14.south)--(p13.north);
                    \node[circle,inner sep=0pt] (p21) at (4, 0)
                    {\begin{tikzpicture}[scale=0.4]
                    \node at (0.9, 0.8) [draw, fill=black] {$.$};
                     \end{tikzpicture}}
                            edge[-latex, loop, out=170, in=110, min distance=60, looseness=2] (p21)
                            edge[-latex, loop, out=180, in=100, min distance=90, looseness=3] (p21)
                            edge[-latex, loop, out=280, in=340, min distance=60, dashed, looseness=2] (p21)
                            edge[-latex, loop, out=270, in=350, min distance=90, dashed, looseness=3] (p21)
                            ;
                    \node[circle,inner sep=0pt] (p22) at (4, 4)
                    {\begin{tikzpicture}[scale=0.4]
                    \node at (0.9, 0.8) [draw, fill=black] {$.$};
                     \end{tikzpicture}}
                            edge[-latex, loop, out=170, in=110, min distance=60, looseness=2] (p22)
                            edge[-latex, loop, out=180, in=100, min distance=90, looseness=3] (p22)
                            edge[-latex, loop, out=280, in=340, min distance=60, dashed, looseness=2] (p22)
                            edge[-latex, loop, out=270, in=350, min distance=90, dashed, looseness=3] (p22)
                            ;
                    \node[circle,inner sep=0pt] (p23) at (4, 8)
                    {\begin{tikzpicture}[scale=0.4]
                    \node at (0.9, 0.8) [draw, fill=black] {$.$};
                     \end{tikzpicture}}
                            edge[-latex, loop, out=170, in=110, min distance=60, looseness=2] (p23)
                            edge[-latex, loop, out=180, in=100, min distance=90, looseness=3] (p23)
                            edge[-latex, loop, out=280, in=340, min distance=60, dashed, looseness=2] (p23)
                            edge[-latex, loop, out=270, in=350, min distance=90, dashed, looseness=3] (p23)
                            ;
                    \node[circle,inner sep=0pt] (p24) at (4, 12)
                    {\begin{tikzpicture}[scale=0.4]
                    \node at (0.9, 0.8) [draw, fill=black] {$.$};
                     \end{tikzpicture}}
                            edge[-latex, loop, out=170, in=110, min distance=60, looseness=2] (p24)
                            edge[-latex, loop, out=180, in=100, min distance=90, looseness=3] (p24)
                            edge[-latex, loop, out=280, in=340, min distance=60, dashed, looseness=2] (p24)
                            edge[-latex, loop, out=270, in=350, min distance=90, dashed, looseness=3] (p24)
                            ;
                     \draw[style=semithick, dashed, -latex] (p22.south)--(p21.north);
                     \draw[style=semithick, dashed, -latex] (p23.south)--(p22.north);
                     \draw[style=semithick, dashed, -latex] (p24.south)--(p23.north);
                     \draw[style=semithick, -latex] (p21.west)--(p11.east);
                    \draw[style=semithick, -latex] (p22.west)--(p12.east);
                    \draw[style=semithick, -latex] (p23.west)--(p13.east);
                    \draw[style=semithick, -latex] (p24.west)--(p14.east);
                    \node[circle,inner sep=0pt] (p31) at (8, 0)
                    {\begin{tikzpicture}[scale=0.4]
                    \node at (0.9, 0.8) [draw, fill=black] {$.$};
                     \end{tikzpicture}}
                            edge[-latex, loop, out=170, in=110, min distance=60, looseness=2] (p31)
                            edge[-latex, loop, out=180, in=100, min distance=90, looseness=3] (p31)
                            edge[-latex, loop, out=280, in=340, min distance=60, dashed, looseness=2] (p31)
                            edge[-latex, loop, out=270, in=350, min distance=90, dashed, looseness=3] (p31)
                            ;
                    \node[circle,inner sep=0pt] (p32) at (8, 4)
                    {\begin{tikzpicture}[scale=0.4]
                    \node at (0.9, 0.8) [draw, fill=black] {$.$};
                     \end{tikzpicture}}
                            edge[-latex, loop, out=170, in=110, min distance=60, looseness=2] (p32)
                            edge[-latex, loop, out=180, in=100, min distance=90, looseness=3] (p32)
                            edge[-latex, loop, out=280, in=340, min distance=60, dashed, looseness=2] (p32)
                            edge[-latex, loop, out=270, in=350, min distance=90, dashed, looseness=3] (p32)
                            ;
                    \node[circle,inner sep=0pt] (p33) at (8, 8)
                    {\begin{tikzpicture}[scale=0.4]
                    \node at (0.9, 0.8) [draw, fill=black] {$.$};
                     \end{tikzpicture}}
                            edge[-latex, loop, out=170, in=110, min distance=60, looseness=2] (p33)
                            edge[-latex, loop, out=180, in=100, min distance=90, looseness=3] (p33)
                            edge[-latex, loop, out=280, in=340, min distance=60, dashed, looseness=2] (p33)
                            edge[-latex, loop, out=270, in=350, min distance=90, dashed, looseness=3] (p33)
                            ;
                    \node[circle,inner sep=0pt] (p34) at (8, 12)
                    {\begin{tikzpicture}[scale=0.4]
                    \node at (0.9, 0.8) [draw, fill=black] {$.$};
                     \end{tikzpicture}}
                            edge[-latex, loop, out=170, in=110, min distance=60, looseness=2] (p34)
                            edge[-latex, loop, out=180, in=100, min distance=90, looseness=3] (p34)
                            edge[-latex, loop, out=280, in=340, min distance=60, dashed, looseness=2] (p34)
                            edge[-latex, loop, out=270, in=350, min distance=90, dashed, looseness=3] (p34)
                            ;
                     \draw[style=semithick, dashed, -latex] (p32.south)--(p31.north);
                     \draw[style=semithick, dashed, -latex] (p33.south)--(p32.north);
                     \draw[style=semithick, dashed, -latex] (p34.south)--(p33.north);
                     \draw[style=semithick, -latex] (p31.west)--(p21.east);
                    \draw[style=semithick, -latex] (p32.west)--(p22.east);
                    \draw[style=semithick, -latex] (p33.west)--(p23.east);
                    \draw[style=semithick, -latex] (p34.west)--(p24.east);
                    \node[circle,inner sep=0pt] (p41) at (12, 0)
                    {\begin{tikzpicture}[scale=0.4]
                    \node at (0.9, 0.8) [draw, fill=black] {$.$};
                     \end{tikzpicture}}
                            edge[-latex, loop, out=170, in=110, min distance=60, looseness=2] (p41)
                            edge[-latex, loop, out=180, in=100, min distance=90, looseness=3] (p41)
                            edge[-latex, loop, out=280, in=340, min distance=60, dashed, looseness=2] (p41)
                            edge[-latex, loop, out=270, in=350, min distance=90, dashed, looseness=3] (p41)
                            ;
                    \node[circle,inner sep=0pt] (p42) at (12, 4)
                    {\begin{tikzpicture}[scale=0.4]
                    \node at (0.9, 0.8) [draw, fill=black] {$.$};
                     \end{tikzpicture}}
                            edge[-latex, loop, out=170, in=110, min distance=60, looseness=2] (p42)
                            edge[-latex, loop, out=180, in=100, min distance=90, looseness=3] (p42)
                            edge[-latex, loop, out=280, in=340, min distance=60, dashed, looseness=2] (p42)
                            edge[-latex, loop, out=270, in=350, min distance=90, dashed, looseness=3] (p42)
                            ;
                    \node[circle,inner sep=0pt] (p43) at (12, 8)
                    {\begin{tikzpicture}[scale=0.4]
                    \node at (0.9, 0.8) [draw, fill=black] {$.$};
                     \end{tikzpicture}}
                            edge[-latex, loop, out=170, in=110, min distance=60, looseness=2] (p43)
                            edge[-latex, loop, out=180, in=100, min distance=90, looseness=3] (p43)
                            edge[-latex, loop, out=280, in=340, min distance=60, dashed, looseness=2] (p43)
                            edge[-latex, loop, out=270, in=350, min distance=90, dashed, looseness=3] (p43)
                            ;
                    \node[circle,inner sep=0pt] (p44) at (12, 12)
                    {\begin{tikzpicture}[scale=0.4]
                    \node at (0.9, 0.8) [draw, fill=black] {$.$};
                     \end{tikzpicture}}
                            edge[-latex, loop, out=170, in=110, min distance=60, looseness=2] (p44)
                            edge[-latex, loop, out=180, in=100, min distance=90, looseness=3] (p44)
                            edge[-latex, loop, out=280, in=340, min distance=60, dashed, looseness=2] (p44)
                            edge[-latex, loop, out=270, in=350, min distance=90, dashed, looseness=3] (p44)
                            ;
                     \draw[style=semithick, dashed, -latex] (p42.south)--(p41.north);
                     \draw[style=semithick, dashed, -latex] (p43.south)--(p42.north);
                     \draw[style=semithick, dashed, -latex] (p44.south)--(p43.north);
                     \draw[style=semithick, -latex] (p41.west)--(p31.east);
                    \draw[style=semithick, -latex] (p42.west)--(p32.east);
                    \draw[style=semithick, -latex] (p43.west)--(p33.east);
                    \draw[style=semithick, -latex] (p44.west)--(p34.east);
                    \node[circle,inner sep=0pt] (p51) at (16, 0)
                    {\begin{tikzpicture}[scale=0.4]
                    \node at (0.9, 0.8) [draw, fill=black] {$.$};
                     \end{tikzpicture}}
                            edge[-latex, loop, out=170, in=110, min distance=60, looseness=2] (p51)
                            edge[-latex, loop, out=180, in=100, min distance=90, looseness=3] (p51)
                            edge[-latex, loop, out=280, in=340, min distance=60, dashed, looseness=2] (p51)
                            edge[-latex, loop, out=270, in=350, min distance=90, dashed, looseness=3] (p51)
                            ;
                    \node[circle,inner sep=0pt] (p52) at (16, 4)
                    {\begin{tikzpicture}[scale=0.4]
                    \node at (0.9, 0.8) [draw, fill=black] {$.$};
                     \end{tikzpicture}}
                            edge[-latex, loop, out=170, in=110, min distance=60, looseness=2] (p52)
                            edge[-latex, loop, out=180, in=100, min distance=90, looseness=3] (p52)
                            edge[-latex, loop, out=280, in=340, min distance=60, dashed, looseness=2] (p52)
                            edge[-latex, loop, out=270, in=350, min distance=90, dashed, looseness=3] (p52)
                            ;
                    \node[circle,inner sep=0pt] (p53) at (16, 8)
                    {\begin{tikzpicture}[scale=0.4]
                    \node at (0.9, 0.8) [draw, fill=black] {$.$};
                     \end{tikzpicture}}
                            edge[-latex, loop, out=170, in=110, min distance=60, looseness=2] (p53)
                            edge[-latex, loop, out=180, in=100, min distance=90, looseness=3] (p53)
                            edge[-latex, loop, out=280, in=340, min distance=60, dashed, looseness=2] (p53)
                            edge[-latex, loop, out=270, in=350, min distance=90, dashed, looseness=3] (p53)
                            ;
                    \node[circle,inner sep=0pt] (p54) at (16, 12)
                    {\begin{tikzpicture}[scale=0.4]
                    \node at (0.9, 0.8) [draw, fill=black] {$.$};
                     \end{tikzpicture}}
                            edge[-latex, loop, out=170, in=110, min distance=60, looseness=2] (p54)
                            edge[-latex, loop, out=180, in=100, min distance=90, looseness=3] (p54)
                            edge[-latex, loop, out=280, in=340, min distance=60, dashed, looseness=2] (p54)
                            edge[-latex, loop, out=270, in=350, min distance=90, dashed, looseness=3] (p54)
                            ;
                     \draw[style=semithick, dashed, -latex] (p52.south)--(p51.north);
                     \draw[style=semithick, dashed, -latex] (p53.south)--(p52.north);
                     \draw[style=semithick, dashed, -latex] (p54.south)--(p53.north);
                     \draw[style=semithick, -latex] (p51.west)--(p41.east);
                    \draw[style=semithick, -latex] (p52.west)--(p42.east);
                    \draw[style=semithick, -latex] (p53.west)--(p43.east);
                    \draw[style=semithick, -latex] (p54.west)--(p44.east);
                    \node[circle,inner sep=0pt] (p61) at (20, 0)
                    {\begin{tikzpicture}[scale=0.4]
                    \node at (0.9, 0.8) [draw, fill=black] {$.$};
                     \end{tikzpicture}}
                            edge[-latex, loop, out=170, in=110, min distance=60, looseness=2] (p61)
                            edge[-latex, loop, out=180, in=100, min distance=90, looseness=3] (p61)
                            edge[-latex, loop, out=280, in=340, min distance=60, dashed, looseness=2] (p61)
                            edge[-latex, loop, out=270, in=350, min distance=90, dashed, looseness=3] (p61)
                            ;
                    \node[circle,inner sep=0pt] (p62) at (20, 4)
                    {\begin{tikzpicture}[scale=0.4]
                    \node at (0.9, 0.8) [draw, fill=black] {$.$};
                     \end{tikzpicture}}
                            edge[-latex, loop, out=170, in=110, min distance=60, looseness=2] (p62)
                            edge[-latex, loop, out=180, in=100, min distance=90, looseness=3] (p62)
                            edge[-latex, loop, out=280, in=340, min distance=60, dashed, looseness=2] (p62)
                            edge[-latex, loop, out=270, in=350, min distance=90, dashed, looseness=3] (p62)
                            ;
                    \node[circle,inner sep=0pt] (p63) at (20, 8)
                    {\begin{tikzpicture}[scale=0.4]
                    \node at (0.9, 0.8) [draw, fill=black] {$.$};
                     \end{tikzpicture}}
                            edge[-latex, loop, out=170, in=110, min distance=60, looseness=2] (p63)
                            edge[-latex, loop, out=180, in=100, min distance=90, looseness=3] (p63)
                            edge[-latex, loop, out=280, in=340, min distance=60, dashed, looseness=2] (p63)
                            edge[-latex, loop, out=270, in=350, min distance=90, dashed, looseness=3] (p63)
                            ;
                    \node[circle,inner sep=0pt] (p64) at (20, 12)
                    {\begin{tikzpicture}[scale=0.4]
                    \node at (0.9, 0.8) [draw, fill=black] {$.$};
                     \end{tikzpicture}}
                            edge[-latex, loop, out=170, in=110, min distance=60, looseness=2] (p64)
                            edge[-latex, loop, out=180, in=100, min distance=90, looseness=3] (p64)
                            edge[-latex, loop, out=280, in=340, min distance=60, dashed, looseness=2] (p64)
                            edge[-latex, loop, out=270, in=350, min distance=90, dashed, looseness=3] (p64)
                            ;
                     \draw[style=semithick, dashed, -latex] (p62.south)--(p61.north);
                     \draw[style=semithick, dashed, -latex] (p63.south)--(p62.north);
                     \draw[style=semithick, dashed, -latex] (p64.south)--(p63.north);
                     \draw[style=semithick, -latex] (p61.west)--(p51.east);
                    \draw[style=semithick, -latex] (p62.west)--(p52.east);
                    \draw[style=semithick, -latex] (p63.west)--(p53.east);
                    \draw[style=semithick, -latex] (p64.west)--(p54.east);
                    \node at (-1.5,0) {.}; \node at (-1,0) {.}; \node at (-0.5,0) {.};
                     \node at (-1.5,4) {.}; \node at (-1,4) {.}; \node at (-0.5,4) {.};
                     \node at (-1.5,8) {.}; \node at (-1,8) {.}; \node at (-0.5,8) {.};
                     \node at (-1.5,12) {.}; \node at (-1,12) {.}; \node at (-0.5,12) {.};
                     \node at (21,0) {.}; \node at (21.5,0) {.}; \node at (22,0) {.};
                    \node at (21,4) {.}; \node at (21.5,4) {.}; \node at (22,4) {.};
                    \node at (21,8) {.}; \node at (21.5,8) {.}; \node at (22,8) {.};
                    \node at (21,12) {.}; \node at (21.5,12) {.}; \node at (22,12) {.};
                     \node at (0,14) {.}; \node at (0,14.5) {.}; \node at (0,15) {.};
                     \node at (4,14) {.}; \node at (4,14.5) {.}; \node at (4,15) {.};
                     \node at (8,14) {.}; \node at (8,14.5) {.}; \node at (8,15) {.};
                     \node at (12,14) {.}; \node at (12,14.5) {.}; \node at (12,15) {.};
                    \node at (16,14) {.}; \node at (16,14.5) {.}; \node at (16,15) {.};
                    \node at (20,14) {.}; \node at (20,14.5) {.}; \node at (20,15) {.};
                    \node at (0,-0.5) {.}; \node at (0,-1) {.}; \node at (0,-1.5) {.};
                    \node at (4,-0.5) {.}; \node at (4,-1) {.}; \node at (4,-1.5) {.};
                    \node at (8,-0.5) {.}; \node at (8,-1) {.}; \node at (8,-1.5) {.};
                    \node at (12,-0.5) {.}; \node at (12,-1) {.}; \node at (12,-1.5) {.};
                    \node at (16,-0.5) {.}; \node at (16,-1) {.}; \node at (16,-1.5) {.};
                    \node at (20,-0.5) {.}; \node at (20,-1) {.}; \node at (20,-1.5) {.};
\end{tikzpicture}
\]

\noindent and factorization rules induced from those in $\Gamma$ given in \eqref{eq:fpforgamma} to $\Lambda \times_c \mathbb{Z}^2$ using \eqref{eq:fpforspg}.

For each $(v,m) \in \Lambda^0$ one checks that $((f_1,m),(f_2,m),(g_1,m),(g_2,m))$ is a $(1,1)$-aperiodic quartet, so Proposition~\ref{prop:sap} shows that $\Lambda$ is strongly aperiodic. Moreover, the graph $\Lambda$ is not
a cartesian product by Remark~\ref{rem:whencart} since the edges $(f_i,m) \in \Lambda^{e_1}$ and $(g_i,m) \in \Lambda^{e_2}$, $i=1,2$ have source $(v,m)$, but have the factorization rule $(g_1,m)(f_2,m)=(f_1,m)(g_1,m)$ by \eqref{eq:fpforgamma} and \eqref{eq:fpforspg} since $c(g_i)=c(f_i)=0$ for $i=1,2$.

Now we describe the ideal structure of $C^{\ast}(\Lambda)$. In the following figure, the vertices in the gray shaded area shown form a hereditary collection of vertices in $\Lambda$. The region is also saturated since vertices $w_1=(v,m_1),w_2=(v,m_2)\in\Lambda^0$ receive both solid and dashed edges from vertices not in the shaded area. However, the complement of the shaded area is not a maximal tail since it fails to satisfy the condition (a) of Definition~\ref{Def-MT}. i.e. there is no common ancestor for $w_1$ and $w_2$.
\[
\begin{tikzpicture}[scale=0.28]
                    \filldraw[color=gray!15!white] (6,6) rectangle (23,16);
                    \node at (3.3, 7) {$w_1$};
                    \node at (7.3, 3) {$w_2$};
                    \node[circle,inner sep=0pt] (p11) at (0, 0)
                    {\begin{tikzpicture}[scale=0.4]
                    \node at (0.9, 0.8) [draw, fill=black] {$.$};
                     \end{tikzpicture}}
                            edge[-latex, loop, out=170, in=110, min distance=60, looseness=2] (p11)
                            edge[-latex, loop, out=180, in=100, min distance=90, looseness=3] (p11)
                            edge[-latex, loop, out=280, in=340, min distance=60, dashed, looseness=2] (p11)
                            edge[-latex, loop, out=270, in=350, min distance=90, dashed, looseness=3] (p11)
                            ;
                    \node[circle,inner sep=0pt] (p12) at (0, 4)
                    {\begin{tikzpicture}[scale=0.4]
                    \node at (0.9, 0.8) [draw, fill=black] {$.$};
                     \end{tikzpicture}}
                            edge[-latex, loop, out=170, in=110, min distance=60, looseness=2] (p12)
                            edge[-latex, loop, out=180, in=100, min distance=90, looseness=3] (p12)
                            edge[-latex, loop, out=280, in=340, min distance=60, dashed, looseness=2] (p12)
                            edge[-latex, loop, out=270, in=350, min distance=90, dashed, looseness=3] (p12)
                            ;
                    \node[circle,inner sep=0pt] (p13) at (0, 8)
                    {\begin{tikzpicture}[scale=0.4]
                    \node at (0.9, 0.8) [draw, fill=black] {$.$};
                     \end{tikzpicture}}
                            edge[-latex, loop, out=170, in=110, min distance=60, looseness=2] (p13)
                            edge[-latex, loop, out=180, in=100, min distance=90, looseness=3] (p13)
                            edge[-latex, loop, out=280, in=340, min distance=60, dashed, looseness=2] (p13)
                            edge[-latex, loop, out=270, in=350, min distance=90, dashed, looseness=3] (p13)
                            ;
                    \node[circle,inner sep=0pt] (p14) at (0, 12)
                    {\begin{tikzpicture}[scale=0.4]
                    \node at (0.9, 0.8) [draw, fill=black] {$.$};
                     \end{tikzpicture}}
                            edge[-latex, loop, out=170, in=110, min distance=60, looseness=2] (p14)
                            edge[-latex, loop, out=180, in=100, min distance=90, looseness=3] (p14)
                            edge[-latex, loop, out=280, in=340, min distance=60, dashed, looseness=2] (p14)
                            edge[-latex, loop, out=270, in=350, min distance=90, dashed, looseness=3] (p14)
                            ;
                     \draw[style=semithick, dashed, -latex] (p12.south)--(p11.north);
                     \draw[style=semithick, dashed, -latex] (p13.south)--(p12.north);
                     \draw[style=semithick, dashed, -latex] (p14.south)--(p13.north);
                    \node[circle,inner sep=0pt] (p21) at (4, 0)
                    {\begin{tikzpicture}[scale=0.4]
                    \node at (0.9, 0.8) [draw, fill=black] {$.$};
                     \end{tikzpicture}}
                            edge[-latex, loop, out=170, in=110, min distance=60, looseness=2] (p21)
                            edge[-latex, loop, out=180, in=100, min distance=90, looseness=3] (p21)
                            edge[-latex, loop, out=280, in=340, min distance=60, dashed, looseness=2] (p21)
                            edge[-latex, loop, out=270, in=350, min distance=90, dashed, looseness=3] (p21)
                            ;
                    \node[circle,inner sep=0pt] (p22) at (4, 4)
                    {\begin{tikzpicture}[scale=0.4]
                    \node at (0.9, 0.8) [draw, fill=black] {$.$};
                     \end{tikzpicture}}
                            edge[-latex, loop, out=170, in=110, min distance=60, looseness=2] (p22)
                            edge[-latex, loop, out=180, in=100, min distance=90, looseness=3] (p22)
                            edge[-latex, loop, out=280, in=340, min distance=60, dashed, looseness=2] (p22)
                            edge[-latex, loop, out=270, in=350, min distance=90, dashed, looseness=3] (p22)
                            ;
                    \node[circle,inner sep=0pt] (p23) at (4, 8)
                    {\begin{tikzpicture}[scale=0.4]
                    \node at (0.9, 0.8) [draw, fill=black] {$.$};
                     \end{tikzpicture}}
                            edge[-latex, loop, out=170, in=110, min distance=60, looseness=2] (p23)
                            edge[-latex, loop, out=180, in=100, min distance=90, looseness=3] (p23)
                            edge[-latex, loop, out=280, in=340, min distance=60, dashed, looseness=2] (p23)
                            edge[-latex, loop, out=270, in=350, min distance=90, dashed, looseness=3] (p23)
                            ;
                    \node[circle,inner sep=0pt] (p24) at (4, 12)
                    {\begin{tikzpicture}[scale=0.4]
                    \node at (0.9, 0.8) [draw, fill=black] {$.$};
                     \end{tikzpicture}}
                            edge[-latex, loop, out=170, in=110, min distance=60, looseness=2] (p24)
                            edge[-latex, loop, out=180, in=100, min distance=90, looseness=3] (p24)
                            edge[-latex, loop, out=280, in=340, min distance=60, dashed, looseness=2] (p24)
                            edge[-latex, loop, out=270, in=350, min distance=90, dashed, looseness=3] (p24)
                            ;
                     \draw[style=semithick, dashed, -latex] (p22.south)--(p21.north);
                     \draw[style=semithick, dashed, -latex] (p23.south)--(p22.north);
                     \draw[style=semithick, dashed, -latex] (p24.south)--(p23.north);
                     \draw[style=semithick, -latex] (p21.west)--(p11.east);
                    \draw[style=semithick, -latex] (p22.west)--(p12.east);
                    \draw[style=semithick, -latex] (p23.west)--(p13.east);
                    \draw[style=semithick, -latex] (p24.west)--(p14.east);
                    \node[circle,inner sep=0pt] (p31) at (8, 0)
                    {\begin{tikzpicture}[scale=0.4]
                    \node at (0.9, 0.8) [draw, fill=black] {$.$};
                     \end{tikzpicture}}
                            edge[-latex, loop, out=170, in=110, min distance=60, looseness=2] (p31)
                            edge[-latex, loop, out=180, in=100, min distance=90, looseness=3] (p31)
                            edge[-latex, loop, out=280, in=340, min distance=60, dashed, looseness=2] (p31)
                            edge[-latex, loop, out=270, in=350, min distance=90, dashed, looseness=3] (p31)
                            ;
                    \node[circle,inner sep=0pt] (p32) at (8, 4)
                    {\begin{tikzpicture}[scale=0.4]
                    \node at (0.9, 0.8) [draw, fill=black] {$.$};
                     \end{tikzpicture}}
                            edge[-latex, loop, out=170, in=110, min distance=60, looseness=2] (p32)
                            edge[-latex, loop, out=180, in=100, min distance=90, looseness=3] (p32)
                            edge[-latex, loop, out=280, in=340, min distance=60, dashed, looseness=2] (p32)
                            edge[-latex, loop, out=270, in=350, min distance=90, dashed, looseness=3] (p32)
                            ;
                    \node[circle,inner sep=0pt] (p33) at (8, 8)
                    {\begin{tikzpicture}[scale=0.4]
                    \node at (0.9, 0.8) [draw, fill=black] {$.$};
                     \end{tikzpicture}}
                            edge[-latex, loop, out=170, in=110, min distance=60, looseness=2] (p33)
                            edge[-latex, loop, out=180, in=100, min distance=90, looseness=3] (p33)
                            edge[-latex, loop, out=280, in=340, min distance=60, dashed, looseness=2] (p33)
                            edge[-latex, loop, out=270, in=350, min distance=90, dashed, looseness=3] (p33)
                            ;
                    \node[circle,inner sep=0pt] (p34) at (8, 12)
                    {\begin{tikzpicture}[scale=0.4]
                    \node at (0.9, 0.8) [draw, fill=black] {$.$};
                     \end{tikzpicture}}
                            edge[-latex, loop, out=170, in=110, min distance=60, looseness=2] (p34)
                            edge[-latex, loop, out=180, in=100, min distance=90, looseness=3] (p34)
                            edge[-latex, loop, out=280, in=340, min distance=60, dashed, looseness=2] (p34)
                            edge[-latex, loop, out=270, in=350, min distance=90, dashed, looseness=3] (p34)
                            ;
                     \draw[style=semithick, dashed, -latex] (p32.south)--(p31.north);
                     \draw[style=semithick, dashed, -latex] (p33.south)--(p32.north);
                     \draw[style=semithick, dashed, -latex] (p34.south)--(p33.north);
                     \draw[style=semithick, -latex] (p31.west)--(p21.east);
                    \draw[style=semithick, -latex] (p32.west)--(p22.east);
                    \draw[style=semithick, -latex] (p33.west)--(p23.east);
                    \draw[style=semithick, -latex] (p34.west)--(p24.east);
                    \node[circle,inner sep=0pt] (p41) at (12, 0)
                    {\begin{tikzpicture}[scale=0.4]
                    \node at (0.9, 0.8) [draw, fill=black] {$.$};
                     \end{tikzpicture}}
                            edge[-latex, loop, out=170, in=110, min distance=60, looseness=2] (p41)
                            edge[-latex, loop, out=180, in=100, min distance=90, looseness=3] (p41)
                            edge[-latex, loop, out=280, in=340, min distance=60, dashed, looseness=2] (p41)
                            edge[-latex, loop, out=270, in=350, min distance=90, dashed, looseness=3] (p41)
                            ;
                    \node[circle,inner sep=0pt] (p42) at (12, 4)
                    {\begin{tikzpicture}[scale=0.4]
                    \node at (0.9, 0.8) [draw, fill=black] {$.$};
                     \end{tikzpicture}}
                            edge[-latex, loop, out=170, in=110, min distance=60, looseness=2] (p42)
                            edge[-latex, loop, out=180, in=100, min distance=90, looseness=3] (p42)
                            edge[-latex, loop, out=280, in=340, min distance=60, dashed, looseness=2] (p42)
                            edge[-latex, loop, out=270, in=350, min distance=90, dashed, looseness=3] (p42)
                            ;
                    \node[circle,inner sep=0pt] (p43) at (12, 8)
                    {\begin{tikzpicture}[scale=0.4]
                    \node at (0.9, 0.8) [draw, fill=black] {$.$};
                     \end{tikzpicture}}
                            edge[-latex, loop, out=170, in=110, min distance=60, looseness=2] (p43)
                            edge[-latex, loop, out=180, in=100, min distance=90, looseness=3] (p43)
                            edge[-latex, loop, out=280, in=340, min distance=60, dashed, looseness=2] (p43)
                            edge[-latex, loop, out=270, in=350, min distance=90, dashed, looseness=3] (p43)
                            ;
                    \node[circle,inner sep=0pt] (p44) at (12, 12)
                    {\begin{tikzpicture}[scale=0.4]
                    \node at (0.9, 0.8) [draw, fill=black] {$.$};
                     \end{tikzpicture}}
                            edge[-latex, loop, out=170, in=110, min distance=60, looseness=2] (p44)
                            edge[-latex, loop, out=180, in=100, min distance=90, looseness=3] (p44)
                            edge[-latex, loop, out=280, in=340, min distance=60, dashed, looseness=2] (p44)
                            edge[-latex, loop, out=270, in=350, min distance=90, dashed, looseness=3] (p44)
                            ;
                     \draw[style=semithick, dashed, -latex] (p42.south)--(p41.north);
                     \draw[style=semithick, dashed, -latex] (p43.south)--(p42.north);
                     \draw[style=semithick, dashed, -latex] (p44.south)--(p43.north);
                     \draw[style=semithick, -latex] (p41.west)--(p31.east);
                    \draw[style=semithick, -latex] (p42.west)--(p32.east);
                    \draw[style=semithick, -latex] (p43.west)--(p33.east);
                    \draw[style=semithick, -latex] (p44.west)--(p34.east);
                    \node[circle,inner sep=0pt] (p51) at (16, 0)
                    {\begin{tikzpicture}[scale=0.4]
                    \node at (0.9, 0.8) [draw, fill=black] {$.$};
                     \end{tikzpicture}}
                            edge[-latex, loop, out=170, in=110, min distance=60, looseness=2] (p51)
                            edge[-latex, loop, out=180, in=100, min distance=90, looseness=3] (p51)
                            edge[-latex, loop, out=280, in=340, min distance=60, dashed, looseness=2] (p51)
                            edge[-latex, loop, out=270, in=350, min distance=90, dashed, looseness=3] (p51)
                            ;
                    \node[circle,inner sep=0pt] (p52) at (16, 4)
                    {\begin{tikzpicture}[scale=0.4]
                    \node at (0.9, 0.8) [draw, fill=black] {$.$};
                     \end{tikzpicture}}
                            edge[-latex, loop, out=170, in=110, min distance=60, looseness=2] (p52)
                            edge[-latex, loop, out=180, in=100, min distance=90, looseness=3] (p52)
                            edge[-latex, loop, out=280, in=340, min distance=60, dashed, looseness=2] (p52)
                            edge[-latex, loop, out=270, in=350, min distance=90, dashed, looseness=3] (p52)
                            ;
                    \node[circle,inner sep=0pt] (p53) at (16, 8)
                    {\begin{tikzpicture}[scale=0.4]
                    \node at (0.9, 0.8) [draw, fill=black] {$.$};
                     \end{tikzpicture}}
                            edge[-latex, loop, out=170, in=110, min distance=60, looseness=2] (p53)
                            edge[-latex, loop, out=180, in=100, min distance=90, looseness=3] (p53)
                            edge[-latex, loop, out=280, in=340, min distance=60, dashed, looseness=2] (p53)
                            edge[-latex, loop, out=270, in=350, min distance=90, dashed, looseness=3] (p53)
                            ;
                    \node[circle,inner sep=0pt] (p54) at (16, 12)
                    {\begin{tikzpicture}[scale=0.4]
                    \node at (0.9, 0.8) [draw, fill=black] {$.$};
                     \end{tikzpicture}}
                            edge[-latex, loop, out=170, in=110, min distance=60, looseness=2] (p54)
                            edge[-latex, loop, out=180, in=100, min distance=90, looseness=3] (p54)
                            edge[-latex, loop, out=280, in=340, min distance=60, dashed, looseness=2] (p54)
                            edge[-latex, loop, out=270, in=350, min distance=90, dashed, looseness=3] (p54)
                            ;
                     \draw[style=semithick, dashed, -latex] (p52.south)--(p51.north);
                     \draw[style=semithick, dashed, -latex] (p53.south)--(p52.north);
                     \draw[style=semithick, dashed, -latex] (p54.south)--(p53.north);
                     \draw[style=semithick, -latex] (p51.west)--(p41.east);
                    \draw[style=semithick, -latex] (p52.west)--(p42.east);
                    \draw[style=semithick, -latex] (p53.west)--(p43.east);
                    \draw[style=semithick, -latex] (p54.west)--(p44.east);
                    \node[circle,inner sep=0pt] (p61) at (20, 0)
                    {\begin{tikzpicture}[scale=0.4]
                    \node at (0.9, 0.8) [draw, fill=black] {$.$};
                     \end{tikzpicture}}
                            edge[-latex, loop, out=170, in=110, min distance=60, looseness=2] (p61)
                            edge[-latex, loop, out=180, in=100, min distance=90, looseness=3] (p61)
                            edge[-latex, loop, out=280, in=340, min distance=60, dashed, looseness=2] (p61)
                            edge[-latex, loop, out=270, in=350, min distance=90, dashed, looseness=3] (p61)
                            ;
                    \node[circle,inner sep=0pt] (p62) at (20, 4)
                    {\begin{tikzpicture}[scale=0.4]
                    \node at (0.9, 0.8) [draw, fill=black] {$.$};
                     \end{tikzpicture}}
                            edge[-latex, loop, out=170, in=110, min distance=60, looseness=2] (p62)
                            edge[-latex, loop, out=180, in=100, min distance=90, looseness=3] (p62)
                            edge[-latex, loop, out=280, in=340, min distance=60, dashed, looseness=2] (p62)
                            edge[-latex, loop, out=270, in=350, min distance=90, dashed, looseness=3] (p62)
                            ;
                    \node[circle,inner sep=0pt] (p63) at (20, 8)
                    {\begin{tikzpicture}[scale=0.4]
                    \node at (0.9, 0.8) [draw, fill=black] {$.$};
                     \end{tikzpicture}}
                            edge[-latex, loop, out=170, in=110, min distance=60, looseness=2] (p63)
                            edge[-latex, loop, out=180, in=100, min distance=90, looseness=3] (p63)
                            edge[-latex, loop, out=280, in=340, min distance=60, dashed, looseness=2] (p63)
                            edge[-latex, loop, out=270, in=350, min distance=90, dashed, looseness=3] (p63)
                            ;
                    \node[circle,inner sep=0pt] (p64) at (20, 12)
                    {\begin{tikzpicture}[scale=0.4]
                    \node at (0.9, 0.8) [draw, fill=black] {$.$};
                     \end{tikzpicture}}
                            edge[-latex, loop, out=170, in=110, min distance=60, looseness=2] (p64)
                            edge[-latex, loop, out=180, in=100, min distance=90, looseness=3] (p64)
                            edge[-latex, loop, out=280, in=340, min distance=60, dashed, looseness=2] (p64)
                            edge[-latex, loop, out=270, in=350, min distance=90, dashed, looseness=3] (p64)
                            ;
                     \draw[style=semithick, dashed, -latex] (p62.south)--(p61.north);
                     \draw[style=semithick, dashed, -latex] (p63.south)--(p62.north);
                     \draw[style=semithick, dashed, -latex] (p64.south)--(p63.north);
                     \draw[style=semithick, -latex] (p61.west)--(p51.east);
                    \draw[style=semithick, -latex] (p62.west)--(p52.east);
                    \draw[style=semithick, -latex] (p63.west)--(p53.east);
                    \draw[style=semithick, -latex] (p64.west)--(p54.east);
                    \node at (-1.5,0) {.}; \node at (-1,0) {.}; \node at (-0.5,0) {.};
                     \node at (-1.5,4) {.}; \node at (-1,4) {.}; \node at (-0.5,4) {.};
                     \node at (-1.5,8) {.}; \node at (-1,8) {.}; \node at (-0.5,8) {.};
                     \node at (-1.5,12) {.}; \node at (-1,12) {.}; \node at (-0.5,12) {.};
                     \node at (21,0) {.}; \node at (21.5,0) {.}; \node at (22,0) {.};
                    \node at (21,4) {.}; \node at (21.5,4) {.}; \node at (22,4) {.};
                    \node at (21,8) {.}; \node at (21.5,8) {.}; \node at (22,8) {.};
                    \node at (21,12) {.}; \node at (21.5,12) {.}; \node at (22,12) {.};
                     \node at (0,14) {.}; \node at (0,14.5) {.}; \node at (0,15) {.};
                     \node at (4,14) {.}; \node at (4,14.5) {.}; \node at (4,15) {.};
                     \node at (8,14) {.}; \node at (8,14.5) {.}; \node at (8,15) {.};
                     \node at (12,14) {.}; \node at (12,14.5) {.}; \node at (12,15) {.};
                    \node at (16,14) {.}; \node at (16,14.5) {.}; \node at (16,15) {.};
                    \node at (20,14) {.}; \node at (20,14.5) {.}; \node at (20,15) {.};
                    \node at (0,-0.5) {.}; \node at (0,-1) {.}; \node at (0,-1.5) {.};
                    \node at (4,-0.5) {.}; \node at (4,-1) {.}; \node at (4,-1.5) {.};
                    \node at (8,-0.5) {.}; \node at (8,-1) {.}; \node at (8,-1.5) {.};
                    \node at (12,-0.5) {.}; \node at (12,-1) {.}; \node at (12,-1.5) {.};
                    \node at (16,-0.5) {.}; \node at (16,-1) {.}; \node at (16,-1.5) {.};
                    \node at (20,-0.5) {.}; \node at (20,-1) {.}; \node at (20,-1.5) {.};
\end{tikzpicture}
\]

\noindent
In the following figure, the vertices in the shaded areas form a hereditary subset of $\Lambda$. This region is saturated since vertex $w\in\Lambda^0$ receives both dashed and solid edges from vertices not in the shaded area. Also, the complement of the shaded region satisfies conditions (a), (b) and (c) of  Definition~\ref{Def-MT}.
\[
\begin{tikzpicture}[scale=0.28]
                    \filldraw[color=gray!15!white] (-2,6) rectangle (23,16);
                    \node at (7.3, 3) {$w$};
                    \node at (10,-3) {Figure 1};
                    \node[circle,inner sep=0pt] (p11) at (0, 0)
                    {\begin{tikzpicture}[scale=0.4]
                    \node at (0.9, 0.8) [draw, fill=black] {$.$};
                     \end{tikzpicture}}
                            edge[-latex, loop, out=170, in=110, min distance=60, looseness=2] (p11)
                            edge[-latex, loop, out=180, in=100, min distance=90, looseness=3] (p11)
                            edge[-latex, loop, out=280, in=340, min distance=60, dashed, looseness=2] (p11)
                            edge[-latex, loop, out=270, in=350, min distance=90, dashed, looseness=3] (p11)
                            ;
                    \node[circle,inner sep=0pt] (p12) at (0, 4)
                    {\begin{tikzpicture}[scale=0.4]
                    \node at (0.9, 0.8) [draw, fill=black] {$.$};
                     \end{tikzpicture}}
                            edge[-latex, loop, out=170, in=110, min distance=60, looseness=2] (p12)
                            edge[-latex, loop, out=180, in=100, min distance=90, looseness=3] (p12)
                            edge[-latex, loop, out=280, in=340, min distance=60, dashed, looseness=2] (p12)
                            edge[-latex, loop, out=270, in=350, min distance=90, dashed, looseness=3] (p12)
                            ;
                    \node[circle,inner sep=0pt] (p13) at (0, 8)
                    {\begin{tikzpicture}[scale=0.4]
                    \node at (0.9, 0.8) [draw, fill=black] {$.$};
                     \end{tikzpicture}}
                            edge[-latex, loop, out=170, in=110, min distance=60, looseness=2] (p13)
                            edge[-latex, loop, out=180, in=100, min distance=90, looseness=3] (p13)
                            edge[-latex, loop, out=280, in=340, min distance=60, dashed, looseness=2] (p13)
                            edge[-latex, loop, out=270, in=350, min distance=90, dashed, looseness=3] (p13)
                            ;
                    \node[circle,inner sep=0pt] (p14) at (0, 12)
                    {\begin{tikzpicture}[scale=0.4]
                    \node at (0.9, 0.8) [draw, fill=black] {$.$};
                     \end{tikzpicture}}
                            edge[-latex, loop, out=170, in=110, min distance=60, looseness=2] (p14)
                            edge[-latex, loop, out=180, in=100, min distance=90, looseness=3] (p14)
                            edge[-latex, loop, out=280, in=340, min distance=60, dashed, looseness=2] (p14)
                            edge[-latex, loop, out=270, in=350, min distance=90, dashed, looseness=3] (p14)
                            ;
                     \draw[style=semithick, dashed, -latex] (p12.south)--(p11.north);
                     \draw[style=semithick, dashed, -latex] (p13.south)--(p12.north);
                     \draw[style=semithick, dashed, -latex] (p14.south)--(p13.north);
                    \node[circle,inner sep=0pt] (p21) at (4, 0)
                    {\begin{tikzpicture}[scale=0.4]
                    \node at (0.9, 0.8) [draw, fill=black] {$.$};
                     \end{tikzpicture}}
                            edge[-latex, loop, out=170, in=110, min distance=60, looseness=2] (p21)
                            edge[-latex, loop, out=180, in=100, min distance=90, looseness=3] (p21)
                            edge[-latex, loop, out=280, in=340, min distance=60, dashed, looseness=2] (p21)
                            edge[-latex, loop, out=270, in=350, min distance=90, dashed, looseness=3] (p21)
                            ;
                    \node[circle,inner sep=0pt] (p22) at (4, 4)
                    {\begin{tikzpicture}[scale=0.4]
                    \node at (0.9, 0.8) [draw, fill=black] {$.$};
                     \end{tikzpicture}}
                            edge[-latex, loop, out=170, in=110, min distance=60, looseness=2] (p22)
                            edge[-latex, loop, out=180, in=100, min distance=90, looseness=3] (p22)
                            edge[-latex, loop, out=280, in=340, min distance=60, dashed, looseness=2] (p22)
                            edge[-latex, loop, out=270, in=350, min distance=90, dashed, looseness=3] (p22)
                            ;
                    \node[circle,inner sep=0pt] (p23) at (4, 8)
                    {\begin{tikzpicture}[scale=0.4]
                    \node at (0.9, 0.8) [draw, fill=black] {$.$};
                     \end{tikzpicture}}
                            edge[-latex, loop, out=170, in=110, min distance=60, looseness=2] (p23)
                            edge[-latex, loop, out=180, in=100, min distance=90, looseness=3] (p23)
                            edge[-latex, loop, out=280, in=340, min distance=60, dashed, looseness=2] (p23)
                            edge[-latex, loop, out=270, in=350, min distance=90, dashed, looseness=3] (p23)
                            ;
                    \node[circle,inner sep=0pt] (p24) at (4, 12)
                    {\begin{tikzpicture}[scale=0.4]
                    \node at (0.9, 0.8) [draw, fill=black] {$.$};
                     \end{tikzpicture}}
                            edge[-latex, loop, out=170, in=110, min distance=60, looseness=2] (p24)
                            edge[-latex, loop, out=180, in=100, min distance=90, looseness=3] (p24)
                            edge[-latex, loop, out=280, in=340, min distance=60, dashed, looseness=2] (p24)
                            edge[-latex, loop, out=270, in=350, min distance=90, dashed, looseness=3] (p24)
                            ;
                     \draw[style=semithick, dashed, -latex] (p22.south)--(p21.north);
                     \draw[style=semithick, dashed, -latex] (p23.south)--(p22.north);
                     \draw[style=semithick, dashed, -latex] (p24.south)--(p23.north);
                     \draw[style=semithick, -latex] (p21.west)--(p11.east);
                    \draw[style=semithick, -latex] (p22.west)--(p12.east);
                    \draw[style=semithick, -latex] (p23.west)--(p13.east);
                    \draw[style=semithick, -latex] (p24.west)--(p14.east);
                    \node[circle,inner sep=0pt] (p31) at (8, 0)
                    {\begin{tikzpicture}[scale=0.4]
                    \node at (0.9, 0.8) [draw, fill=black] {$.$};
                     \end{tikzpicture}}
                            edge[-latex, loop, out=170, in=110, min distance=60, looseness=2] (p31)
                            edge[-latex, loop, out=180, in=100, min distance=90, looseness=3] (p31)
                            edge[-latex, loop, out=280, in=340, min distance=60, dashed, looseness=2] (p31)
                            edge[-latex, loop, out=270, in=350, min distance=90, dashed, looseness=3] (p31)
                            ;
                    \node[circle,inner sep=0pt] (p32) at (8, 4)
                    {\begin{tikzpicture}[scale=0.4]
                    \node at (0.9, 0.8) [draw, fill=black] {$.$};
                     \end{tikzpicture}}
                            edge[-latex, loop, out=170, in=110, min distance=60, looseness=2] (p32)
                            edge[-latex, loop, out=180, in=100, min distance=90, looseness=3] (p32)
                            edge[-latex, loop, out=280, in=340, min distance=60, dashed, looseness=2] (p32)
                            edge[-latex, loop, out=270, in=350, min distance=90, dashed, looseness=3] (p32)
                            ;
                    \node[circle,inner sep=0pt] (p33) at (8, 8)
                    {\begin{tikzpicture}[scale=0.4]
                    \node at (0.9, 0.8) [draw, fill=black] {$.$};
                     \end{tikzpicture}}
                            edge[-latex, loop, out=170, in=110, min distance=60, looseness=2] (p33)
                            edge[-latex, loop, out=180, in=100, min distance=90, looseness=3] (p33)
                            edge[-latex, loop, out=280, in=340, min distance=60, dashed, looseness=2] (p33)
                            edge[-latex, loop, out=270, in=350, min distance=90, dashed, looseness=3] (p33)
                            ;
                    \node[circle,inner sep=0pt] (p34) at (8, 12)
                    {\begin{tikzpicture}[scale=0.4]
                    \node at (0.9, 0.8) [draw, fill=black] {$.$};
                     \end{tikzpicture}}
                            edge[-latex, loop, out=170, in=110, min distance=60, looseness=2] (p34)
                            edge[-latex, loop, out=180, in=100, min distance=90, looseness=3] (p34)
                            edge[-latex, loop, out=280, in=340, min distance=60, dashed, looseness=2] (p34)
                            edge[-latex, loop, out=270, in=350, min distance=90, dashed, looseness=3] (p34)
                            ;
                     \draw[style=semithick, dashed, -latex] (p32.south)--(p31.north);
                     \draw[style=semithick, dashed, -latex] (p33.south)--(p32.north);
                     \draw[style=semithick, dashed, -latex] (p34.south)--(p33.north);
                     \draw[style=semithick, -latex] (p31.west)--(p21.east);
                    \draw[style=semithick, -latex] (p32.west)--(p22.east);
                    \draw[style=semithick, -latex] (p33.west)--(p23.east);
                    \draw[style=semithick, -latex] (p34.west)--(p24.east);
                    \node[circle,inner sep=0pt] (p41) at (12, 0)
                    {\begin{tikzpicture}[scale=0.4]
                    \node at (0.9, 0.8) [draw, fill=black] {$.$};
                     \end{tikzpicture}}
                            edge[-latex, loop, out=170, in=110, min distance=60, looseness=2] (p41)
                            edge[-latex, loop, out=180, in=100, min distance=90, looseness=3] (p41)
                            edge[-latex, loop, out=280, in=340, min distance=60, dashed, looseness=2] (p41)
                            edge[-latex, loop, out=270, in=350, min distance=90, dashed, looseness=3] (p41)
                            ;
                    \node[circle,inner sep=0pt] (p42) at (12, 4)
                    {\begin{tikzpicture}[scale=0.4]
                    \node at (0.9, 0.8) [draw, fill=black] {$.$};
                     \end{tikzpicture}}
                            edge[-latex, loop, out=170, in=110, min distance=60, looseness=2] (p42)
                            edge[-latex, loop, out=180, in=100, min distance=90, looseness=3] (p42)
                            edge[-latex, loop, out=280, in=340, min distance=60, dashed, looseness=2] (p42)
                            edge[-latex, loop, out=270, in=350, min distance=90, dashed, looseness=3] (p42)
                            ;
                    \node[circle,inner sep=0pt] (p43) at (12, 8)
                    {\begin{tikzpicture}[scale=0.4]
                    \node at (0.9, 0.8) [draw, fill=black] {$.$};
                     \end{tikzpicture}}
                            edge[-latex, loop, out=170, in=110, min distance=60, looseness=2] (p43)
                            edge[-latex, loop, out=180, in=100, min distance=90, looseness=3] (p43)
                            edge[-latex, loop, out=280, in=340, min distance=60, dashed, looseness=2] (p43)
                            edge[-latex, loop, out=270, in=350, min distance=90, dashed, looseness=3] (p43)
                            ;
                    \node[circle,inner sep=0pt] (p44) at (12, 12)
                    {\begin{tikzpicture}[scale=0.4]
                    \node at (0.9, 0.8) [draw, fill=black] {$.$};
                     \end{tikzpicture}}
                            edge[-latex, loop, out=170, in=110, min distance=60, looseness=2] (p44)
                            edge[-latex, loop, out=180, in=100, min distance=90, looseness=3] (p44)
                            edge[-latex, loop, out=280, in=340, min distance=60, dashed, looseness=2] (p44)
                            edge[-latex, loop, out=270, in=350, min distance=90, dashed, looseness=3] (p44)
                            ;
                     \draw[style=semithick, dashed, -latex] (p42.south)--(p41.north);
                     \draw[style=semithick, dashed, -latex] (p43.south)--(p42.north);
                     \draw[style=semithick, dashed, -latex] (p44.south)--(p43.north);
                     \draw[style=semithick, -latex] (p41.west)--(p31.east);
                    \draw[style=semithick, -latex] (p42.west)--(p32.east);
                    \draw[style=semithick, -latex] (p43.west)--(p33.east);
                    \draw[style=semithick, -latex] (p44.west)--(p34.east);
                    \node[circle,inner sep=0pt] (p51) at (16, 0)
                    {\begin{tikzpicture}[scale=0.4]
                    \node at (0.9, 0.8) [draw, fill=black] {$.$};
                     \end{tikzpicture}}
                            edge[-latex, loop, out=170, in=110, min distance=60, looseness=2] (p51)
                            edge[-latex, loop, out=180, in=100, min distance=90, looseness=3] (p51)
                            edge[-latex, loop, out=280, in=340, min distance=60, dashed, looseness=2] (p51)
                            edge[-latex, loop, out=270, in=350, min distance=90, dashed, looseness=3] (p51)
                            ;
                    \node[circle,inner sep=0pt] (p52) at (16, 4)
                    {\begin{tikzpicture}[scale=0.4]
                    \node at (0.9, 0.8) [draw, fill=black] {$.$};
                     \end{tikzpicture}}
                            edge[-latex, loop, out=170, in=110, min distance=60, looseness=2] (p52)
                            edge[-latex, loop, out=180, in=100, min distance=90, looseness=3] (p52)
                            edge[-latex, loop, out=280, in=340, min distance=60, dashed, looseness=2] (p52)
                            edge[-latex, loop, out=270, in=350, min distance=90, dashed, looseness=3] (p52)
                            ;
                    \node[circle,inner sep=0pt] (p53) at (16, 8)
                    {\begin{tikzpicture}[scale=0.4]
                    \node at (0.9, 0.8) [draw, fill=black] {$.$};
                     \end{tikzpicture}}
                            edge[-latex, loop, out=170, in=110, min distance=60, looseness=2] (p53)
                            edge[-latex, loop, out=180, in=100, min distance=90, looseness=3] (p53)
                            edge[-latex, loop, out=280, in=340, min distance=60, dashed, looseness=2] (p53)
                            edge[-latex, loop, out=270, in=350, min distance=90, dashed, looseness=3] (p53)
                            ;
                    \node[circle,inner sep=0pt] (p54) at (16, 12)
                    {\begin{tikzpicture}[scale=0.4]
                    \node at (0.9, 0.8) [draw, fill=black] {$.$};
                     \end{tikzpicture}}
                            edge[-latex, loop, out=170, in=110, min distance=60, looseness=2] (p54)
                            edge[-latex, loop, out=180, in=100, min distance=90, looseness=3] (p54)
                            edge[-latex, loop, out=280, in=340, min distance=60, dashed, looseness=2] (p54)
                            edge[-latex, loop, out=270, in=350, min distance=90, dashed, looseness=3] (p54)
                            ;
                     \draw[style=semithick, dashed, -latex] (p52.south)--(p51.north);
                     \draw[style=semithick, dashed, -latex] (p53.south)--(p52.north);
                     \draw[style=semithick, dashed, -latex] (p54.south)--(p53.north);
                     \draw[style=semithick, -latex] (p51.west)--(p41.east);
                    \draw[style=semithick, -latex] (p52.west)--(p42.east);
                    \draw[style=semithick, -latex] (p53.west)--(p43.east);
                    \draw[style=semithick, -latex] (p54.west)--(p44.east);
                    \node[circle,inner sep=0pt] (p61) at (20, 0)
                    {\begin{tikzpicture}[scale=0.4]
                    \node at (0.9, 0.8) [draw, fill=black] {$.$};
                     \end{tikzpicture}}
                            edge[-latex, loop, out=170, in=110, min distance=60, looseness=2] (p61)
                            edge[-latex, loop, out=180, in=100, min distance=90, looseness=3] (p61)
                            edge[-latex, loop, out=280, in=340, min distance=60, dashed, looseness=2] (p61)
                            edge[-latex, loop, out=270, in=350, min distance=90, dashed, looseness=3] (p61)
                            ;
                    \node[circle,inner sep=0pt] (p62) at (20, 4)
                    {\begin{tikzpicture}[scale=0.4]
                    \node at (0.9, 0.8) [draw, fill=black] {$.$};
                     \end{tikzpicture}}
                            edge[-latex, loop, out=170, in=110, min distance=60, looseness=2] (p62)
                            edge[-latex, loop, out=180, in=100, min distance=90, looseness=3] (p62)
                            edge[-latex, loop, out=280, in=340, min distance=60, dashed, looseness=2] (p62)
                            edge[-latex, loop, out=270, in=350, min distance=90, dashed, looseness=3] (p62)
                            ;
                    \node[circle,inner sep=0pt] (p63) at (20, 8)
                    {\begin{tikzpicture}[scale=0.4]
                    \node at (0.9, 0.8) [draw, fill=black] {$.$};
                     \end{tikzpicture}}
                            edge[-latex, loop, out=170, in=110, min distance=60, looseness=2] (p63)
                            edge[-latex, loop, out=180, in=100, min distance=90, looseness=3] (p63)
                            edge[-latex, loop, out=280, in=340, min distance=60, dashed, looseness=2] (p63)
                            edge[-latex, loop, out=270, in=350, min distance=90, dashed, looseness=3] (p63)
                            ;
                    \node[circle,inner sep=0pt] (p64) at (20, 12)
                    {\begin{tikzpicture}[scale=0.4]
                    \node at (0.9, 0.8) [draw, fill=black] {$.$};
                     \end{tikzpicture}}
                            edge[-latex, loop, out=170, in=110, min distance=60, looseness=2] (p64)
                            edge[-latex, loop, out=180, in=100, min distance=90, looseness=3] (p64)
                            edge[-latex, loop, out=280, in=340, min distance=60, dashed, looseness=2] (p64)
                            edge[-latex, loop, out=270, in=350, min distance=90, dashed, looseness=3] (p64)
                            ;
                     \draw[style=semithick, dashed, -latex] (p62.south)--(p61.north);
                     \draw[style=semithick, dashed, -latex] (p63.south)--(p62.north);
                     \draw[style=semithick, dashed, -latex] (p64.south)--(p63.north);
                     \draw[style=semithick, -latex] (p61.west)--(p51.east);
                    \draw[style=semithick, -latex] (p62.west)--(p52.east);
                    \draw[style=semithick, -latex] (p63.west)--(p53.east);
                    \draw[style=semithick, -latex] (p64.west)--(p54.east);
                    \node at (-1.5,0) {.}; \node at (-1,0) {.}; \node at (-0.5,0) {.};
                     \node at (-1.5,4) {.}; \node at (-1,4) {.}; \node at (-0.5,4) {.};
                     \node at (-1.5,8) {.}; \node at (-1,8) {.}; \node at (-0.5,8) {.};
                     \node at (-1.5,12) {.}; \node at (-1,12) {.}; \node at (-0.5,12) {.};
                     \node at (21,0) {.}; \node at (21.5,0) {.}; \node at (22,0) {.};
                    \node at (21,4) {.}; \node at (21.5,4) {.}; \node at (22,4) {.};
                    \node at (21,8) {.}; \node at (21.5,8) {.}; \node at (22,8) {.};
                    \node at (21,12) {.}; \node at (21.5,12) {.}; \node at (22,12) {.};
                     \node at (0,14) {.}; \node at (0,14.5) {.}; \node at (0,15) {.};
                     \node at (4,14) {.}; \node at (4,14.5) {.}; \node at (4,15) {.};
                     \node at (8,14) {.}; \node at (8,14.5) {.}; \node at (8,15) {.};
                     \node at (12,14) {.}; \node at (12,14.5) {.}; \node at (12,15) {.};
                    \node at (16,14) {.}; \node at (16,14.5) {.}; \node at (16,15) {.};
                    \node at (20,14) {.}; \node at (20,14.5) {.}; \node at (20,15) {.};
                    \node at (0,-0.5) {.}; \node at (0,-1) {.}; \node at (0,-1.5) {.};
                    \node at (4,-0.5) {.}; \node at (4,-1) {.}; \node at (4,-1.5) {.};
                    \node at (8,-0.5) {.}; \node at (8,-1) {.}; \node at (8,-1.5) {.};
                    \node at (12,-0.5) {.}; \node at (12,-1) {.}; \node at (12,-1.5) {.};
                    \node at (16,-0.5) {.}; \node at (16,-1) {.}; \node at (16,-1.5) {.};
                    \node at (20,-0.5) {.}; \node at (20,-1) {.}; \node at (20,-1.5) {.};
\end{tikzpicture}
\]

\noindent
Similarly, the vertices in the shaded area of the following figure form a hereditary and saturated subset in $\Lambda$, and the complement is a maximal tail.
\[
\begin{tikzpicture}[scale=0.28]
                    \filldraw[color=gray!15!white] (10,-2) rectangle (23,16);
                    \node at (7.3, 3) {$w$};
                    \node at (10,-3) {Figure 2};
                    \node[circle,inner sep=0pt] (p11) at (0, 0)
                    {\begin{tikzpicture}[scale=0.4]
                    \node at (0.9, 0.8) [draw, fill=black] {$.$};
                     \end{tikzpicture}}
                            edge[-latex, loop, out=170, in=110, min distance=60, looseness=2] (p11)
                            edge[-latex, loop, out=180, in=100, min distance=90, looseness=3] (p11)
                            edge[-latex, loop, out=280, in=340, min distance=60, dashed, looseness=2] (p11)
                            edge[-latex, loop, out=270, in=350, min distance=90, dashed, looseness=3] (p11)
                            ;
                    \node[circle,inner sep=0pt] (p12) at (0, 4)
                    {\begin{tikzpicture}[scale=0.4]
                    \node at (0.9, 0.8) [draw, fill=black] {$.$};
                     \end{tikzpicture}}
                            edge[-latex, loop, out=170, in=110, min distance=60, looseness=2] (p12)
                            edge[-latex, loop, out=180, in=100, min distance=90, looseness=3] (p12)
                            edge[-latex, loop, out=280, in=340, min distance=60, dashed, looseness=2] (p12)
                            edge[-latex, loop, out=270, in=350, min distance=90, dashed, looseness=3] (p12)
                            ;
                    \node[circle,inner sep=0pt] (p13) at (0, 8)
                    {\begin{tikzpicture}[scale=0.4]
                    \node at (0.9, 0.8) [draw, fill=black] {$.$};
                     \end{tikzpicture}}
                            edge[-latex, loop, out=170, in=110, min distance=60, looseness=2] (p13)
                            edge[-latex, loop, out=180, in=100, min distance=90, looseness=3] (p13)
                            edge[-latex, loop, out=280, in=340, min distance=60, dashed, looseness=2] (p13)
                            edge[-latex, loop, out=270, in=350, min distance=90, dashed, looseness=3] (p13)
                            ;
                    \node[circle,inner sep=0pt] (p14) at (0, 12)
                    {\begin{tikzpicture}[scale=0.4]
                    \node at (0.9, 0.8) [draw, fill=black] {$.$};
                     \end{tikzpicture}}
                            edge[-latex, loop, out=170, in=110, min distance=60, looseness=2] (p14)
                            edge[-latex, loop, out=180, in=100, min distance=90, looseness=3] (p14)
                            edge[-latex, loop, out=280, in=340, min distance=60, dashed, looseness=2] (p14)
                            edge[-latex, loop, out=270, in=350, min distance=90, dashed, looseness=3] (p14)
                            ;
                     \draw[style=semithick, dashed, -latex] (p12.south)--(p11.north);
                     \draw[style=semithick, dashed, -latex] (p13.south)--(p12.north);
                     \draw[style=semithick, dashed, -latex] (p14.south)--(p13.north);
                    \node[circle,inner sep=0pt] (p21) at (4, 0)
                    {\begin{tikzpicture}[scale=0.4]
                    \node at (0.9, 0.8) [draw, fill=black] {$.$};
                     \end{tikzpicture}}
                            edge[-latex, loop, out=170, in=110, min distance=60, looseness=2] (p21)
                            edge[-latex, loop, out=180, in=100, min distance=90, looseness=3] (p21)
                            edge[-latex, loop, out=280, in=340, min distance=60, dashed, looseness=2] (p21)
                            edge[-latex, loop, out=270, in=350, min distance=90, dashed, looseness=3] (p21)
                            ;
                    \node[circle,inner sep=0pt] (p22) at (4, 4)
                    {\begin{tikzpicture}[scale=0.4]
                    \node at (0.9, 0.8) [draw, fill=black] {$.$};
                     \end{tikzpicture}}
                            edge[-latex, loop, out=170, in=110, min distance=60, looseness=2] (p22)
                            edge[-latex, loop, out=180, in=100, min distance=90, looseness=3] (p22)
                            edge[-latex, loop, out=280, in=340, min distance=60, dashed, looseness=2] (p22)
                            edge[-latex, loop, out=270, in=350, min distance=90, dashed, looseness=3] (p22)
                            ;
                    \node[circle,inner sep=0pt] (p23) at (4, 8)
                    {\begin{tikzpicture}[scale=0.4]
                    \node at (0.9, 0.8) [draw, fill=black] {$.$};
                     \end{tikzpicture}}
                            edge[-latex, loop, out=170, in=110, min distance=60, looseness=2] (p23)
                            edge[-latex, loop, out=180, in=100, min distance=90, looseness=3] (p23)
                            edge[-latex, loop, out=280, in=340, min distance=60, dashed, looseness=2] (p23)
                            edge[-latex, loop, out=270, in=350, min distance=90, dashed, looseness=3] (p23)
                            ;
                    \node[circle,inner sep=0pt] (p24) at (4, 12)
                    {\begin{tikzpicture}[scale=0.4]
                    \node at (0.9, 0.8) [draw, fill=black] {$.$};
                     \end{tikzpicture}}
                            edge[-latex, loop, out=170, in=110, min distance=60, looseness=2] (p24)
                            edge[-latex, loop, out=180, in=100, min distance=90, looseness=3] (p24)
                            edge[-latex, loop, out=280, in=340, min distance=60, dashed, looseness=2] (p24)
                            edge[-latex, loop, out=270, in=350, min distance=90, dashed, looseness=3] (p24)
                            ;
                     \draw[style=semithick, dashed, -latex] (p22.south)--(p21.north);
                     \draw[style=semithick, dashed, -latex] (p23.south)--(p22.north);
                     \draw[style=semithick, dashed, -latex] (p24.south)--(p23.north);
                     \draw[style=semithick, -latex] (p21.west)--(p11.east);
                    \draw[style=semithick, -latex] (p22.west)--(p12.east);
                    \draw[style=semithick, -latex] (p23.west)--(p13.east);
                    \draw[style=semithick, -latex] (p24.west)--(p14.east);
                    \node[circle,inner sep=0pt] (p31) at (8, 0)
                    {\begin{tikzpicture}[scale=0.4]
                    \node at (0.9, 0.8) [draw, fill=black] {$.$};
                     \end{tikzpicture}}
                            edge[-latex, loop, out=170, in=110, min distance=60, looseness=2] (p31)
                            edge[-latex, loop, out=180, in=100, min distance=90, looseness=3] (p31)
                            edge[-latex, loop, out=280, in=340, min distance=60, dashed, looseness=2] (p31)
                            edge[-latex, loop, out=270, in=350, min distance=90, dashed, looseness=3] (p31)
                            ;
                    \node[circle,inner sep=0pt] (p32) at (8, 4)
                    {\begin{tikzpicture}[scale=0.4]
                    \node at (0.9, 0.8) [draw, fill=black] {$.$};
                     \end{tikzpicture}}
                            edge[-latex, loop, out=170, in=110, min distance=60, looseness=2] (p32)
                            edge[-latex, loop, out=180, in=100, min distance=90, looseness=3] (p32)
                            edge[-latex, loop, out=280, in=340, min distance=60, dashed, looseness=2] (p32)
                            edge[-latex, loop, out=270, in=350, min distance=90, dashed, looseness=3] (p32)
                            ;
                    \node[circle,inner sep=0pt] (p33) at (8, 8)
                    {\begin{tikzpicture}[scale=0.4]
                    \node at (0.9, 0.8) [draw, fill=black] {$.$};
                     \end{tikzpicture}}
                            edge[-latex, loop, out=170, in=110, min distance=60, looseness=2] (p33)
                            edge[-latex, loop, out=180, in=100, min distance=90, looseness=3] (p33)
                            edge[-latex, loop, out=280, in=340, min distance=60, dashed, looseness=2] (p33)
                            edge[-latex, loop, out=270, in=350, min distance=90, dashed, looseness=3] (p33)
                            ;
                    \node[circle,inner sep=0pt] (p34) at (8, 12)
                    {\begin{tikzpicture}[scale=0.4]
                    \node at (0.9, 0.8) [draw, fill=black] {$.$};
                     \end{tikzpicture}}
                            edge[-latex, loop, out=170, in=110, min distance=60, looseness=2] (p34)
                            edge[-latex, loop, out=180, in=100, min distance=90, looseness=3] (p34)
                            edge[-latex, loop, out=280, in=340, min distance=60, dashed, looseness=2] (p34)
                            edge[-latex, loop, out=270, in=350, min distance=90, dashed, looseness=3] (p34)
                            ;
                     \draw[style=semithick, dashed, -latex] (p32.south)--(p31.north);
                     \draw[style=semithick, dashed, -latex] (p33.south)--(p32.north);
                     \draw[style=semithick, dashed, -latex] (p34.south)--(p33.north);
                     \draw[style=semithick, -latex] (p31.west)--(p21.east);
                    \draw[style=semithick, -latex] (p32.west)--(p22.east);
                    \draw[style=semithick, -latex] (p33.west)--(p23.east);
                    \draw[style=semithick, -latex] (p34.west)--(p24.east);
                    \node[circle,inner sep=0pt] (p41) at (12, 0)
                    {\begin{tikzpicture}[scale=0.4]
                    \node at (0.9, 0.8) [draw, fill=black] {$.$};
                     \end{tikzpicture}}
                            edge[-latex, loop, out=170, in=110, min distance=60, looseness=2] (p41)
                            edge[-latex, loop, out=180, in=100, min distance=90, looseness=3] (p41)
                            edge[-latex, loop, out=280, in=340, min distance=60, dashed, looseness=2] (p41)
                            edge[-latex, loop, out=270, in=350, min distance=90, dashed, looseness=3] (p41)
                            ;
                    \node[circle,inner sep=0pt] (p42) at (12, 4)
                    {\begin{tikzpicture}[scale=0.4]
                    \node at (0.9, 0.8) [draw, fill=black] {$.$};
                     \end{tikzpicture}}
                            edge[-latex, loop, out=170, in=110, min distance=60, looseness=2] (p42)
                            edge[-latex, loop, out=180, in=100, min distance=90, looseness=3] (p42)
                            edge[-latex, loop, out=280, in=340, min distance=60, dashed, looseness=2] (p42)
                            edge[-latex, loop, out=270, in=350, min distance=90, dashed, looseness=3] (p42)
                            ;
                    \node[circle,inner sep=0pt] (p43) at (12, 8)
                    {\begin{tikzpicture}[scale=0.4]
                    \node at (0.9, 0.8) [draw, fill=black] {$.$};
                     \end{tikzpicture}}
                            edge[-latex, loop, out=170, in=110, min distance=60, looseness=2] (p43)
                            edge[-latex, loop, out=180, in=100, min distance=90, looseness=3] (p43)
                            edge[-latex, loop, out=280, in=340, min distance=60, dashed, looseness=2] (p43)
                            edge[-latex, loop, out=270, in=350, min distance=90, dashed, looseness=3] (p43)
                            ;
                    \node[circle,inner sep=0pt] (p44) at (12, 12)
                    {\begin{tikzpicture}[scale=0.4]
                    \node at (0.9, 0.8) [draw, fill=black] {$.$};
                     \end{tikzpicture}}
                            edge[-latex, loop, out=170, in=110, min distance=60, looseness=2] (p44)
                            edge[-latex, loop, out=180, in=100, min distance=90, looseness=3] (p44)
                            edge[-latex, loop, out=280, in=340, min distance=60, dashed, looseness=2] (p44)
                            edge[-latex, loop, out=270, in=350, min distance=90, dashed, looseness=3] (p44)
                            ;
                     \draw[style=semithick, dashed, -latex] (p42.south)--(p41.north);
                     \draw[style=semithick, dashed, -latex] (p43.south)--(p42.north);
                     \draw[style=semithick, dashed, -latex] (p44.south)--(p43.north);
                     \draw[style=semithick, -latex] (p41.west)--(p31.east);
                    \draw[style=semithick, -latex] (p42.west)--(p32.east);
                    \draw[style=semithick, -latex] (p43.west)--(p33.east);
                    \draw[style=semithick, -latex] (p44.west)--(p34.east);
                    \node[circle,inner sep=0pt] (p51) at (16, 0)
                    {\begin{tikzpicture}[scale=0.4]
                    \node at (0.9, 0.8) [draw, fill=black] {$.$};
                     \end{tikzpicture}}
                            edge[-latex, loop, out=170, in=110, min distance=60, looseness=2] (p51)
                            edge[-latex, loop, out=180, in=100, min distance=90, looseness=3] (p51)
                            edge[-latex, loop, out=280, in=340, min distance=60, dashed, looseness=2] (p51)
                            edge[-latex, loop, out=270, in=350, min distance=90, dashed, looseness=3] (p51)
                            ;
                    \node[circle,inner sep=0pt] (p52) at (16, 4)
                    {\begin{tikzpicture}[scale=0.4]
                    \node at (0.9, 0.8) [draw, fill=black] {$.$};
                     \end{tikzpicture}}
                            edge[-latex, loop, out=170, in=110, min distance=60, looseness=2] (p52)
                            edge[-latex, loop, out=180, in=100, min distance=90, looseness=3] (p52)
                            edge[-latex, loop, out=280, in=340, min distance=60, dashed, looseness=2] (p52)
                            edge[-latex, loop, out=270, in=350, min distance=90, dashed, looseness=3] (p52)
                            ;
                    \node[circle,inner sep=0pt] (p53) at (16, 8)
                    {\begin{tikzpicture}[scale=0.4]
                    \node at (0.9, 0.8) [draw, fill=black] {$.$};
                     \end{tikzpicture}}
                            edge[-latex, loop, out=170, in=110, min distance=60, looseness=2] (p53)
                            edge[-latex, loop, out=180, in=100, min distance=90, looseness=3] (p53)
                            edge[-latex, loop, out=280, in=340, min distance=60, dashed, looseness=2] (p53)
                            edge[-latex, loop, out=270, in=350, min distance=90, dashed, looseness=3] (p53)
                            ;
                    \node[circle,inner sep=0pt] (p54) at (16, 12)
                    {\begin{tikzpicture}[scale=0.4]
                    \node at (0.9, 0.8) [draw, fill=black] {$.$};
                     \end{tikzpicture}}
                            edge[-latex, loop, out=170, in=110, min distance=60, looseness=2] (p54)
                            edge[-latex, loop, out=180, in=100, min distance=90, looseness=3] (p54)
                            edge[-latex, loop, out=280, in=340, min distance=60, dashed, looseness=2] (p54)
                            edge[-latex, loop, out=270, in=350, min distance=90, dashed, looseness=3] (p54)
                            ;
                     \draw[style=semithick, dashed, -latex] (p52.south)--(p51.north);
                     \draw[style=semithick, dashed, -latex] (p53.south)--(p52.north);
                     \draw[style=semithick, dashed, -latex] (p54.south)--(p53.north);
                     \draw[style=semithick, -latex] (p51.west)--(p41.east);
                    \draw[style=semithick, -latex] (p52.west)--(p42.east);
                    \draw[style=semithick, -latex] (p53.west)--(p43.east);
                    \draw[style=semithick, -latex] (p54.west)--(p44.east);
                    \node[circle,inner sep=0pt] (p61) at (20, 0)
                    {\begin{tikzpicture}[scale=0.4]
                    \node at (0.9, 0.8) [draw, fill=black] {$.$};
                     \end{tikzpicture}}
                            edge[-latex, loop, out=170, in=110, min distance=60, looseness=2] (p61)
                            edge[-latex, loop, out=180, in=100, min distance=90, looseness=3] (p61)
                            edge[-latex, loop, out=280, in=340, min distance=60, dashed, looseness=2] (p61)
                            edge[-latex, loop, out=270, in=350, min distance=90, dashed, looseness=3] (p61)
                            ;
                    \node[circle,inner sep=0pt] (p62) at (20, 4)
                    {\begin{tikzpicture}[scale=0.4]
                    \node at (0.9, 0.8) [draw, fill=black] {$.$};
                     \end{tikzpicture}}
                            edge[-latex, loop, out=170, in=110, min distance=60, looseness=2] (p62)
                            edge[-latex, loop, out=180, in=100, min distance=90, looseness=3] (p62)
                            edge[-latex, loop, out=280, in=340, min distance=60, dashed, looseness=2] (p62)
                            edge[-latex, loop, out=270, in=350, min distance=90, dashed, looseness=3] (p62)
                            ;
                    \node[circle,inner sep=0pt] (p63) at (20, 8)
                    {\begin{tikzpicture}[scale=0.4]
                    \node at (0.9, 0.8) [draw, fill=black] {$.$};
                     \end{tikzpicture}}
                            edge[-latex, loop, out=170, in=110, min distance=60, looseness=2] (p63)
                            edge[-latex, loop, out=180, in=100, min distance=90, looseness=3] (p63)
                            edge[-latex, loop, out=280, in=340, min distance=60, dashed, looseness=2] (p63)
                            edge[-latex, loop, out=270, in=350, min distance=90, dashed, looseness=3] (p63)
                            ;
                    \node[circle,inner sep=0pt] (p64) at (20, 12)
                    {\begin{tikzpicture}[scale=0.4]
                    \node at (0.9, 0.8) [draw, fill=black] {$.$};
                     \end{tikzpicture}}
                            edge[-latex, loop, out=170, in=110, min distance=60, looseness=2] (p64)
                            edge[-latex, loop, out=180, in=100, min distance=90, looseness=3] (p64)
                            edge[-latex, loop, out=280, in=340, min distance=60, dashed, looseness=2] (p64)
                            edge[-latex, loop, out=270, in=350, min distance=90, dashed, looseness=3] (p64)
                            ;
                     \draw[style=semithick, dashed, -latex] (p62.south)--(p61.north);
                     \draw[style=semithick, dashed, -latex] (p63.south)--(p62.north);
                     \draw[style=semithick, dashed, -latex] (p64.south)--(p63.north);
                     \draw[style=semithick, -latex] (p61.west)--(p51.east);
                    \draw[style=semithick, -latex] (p62.west)--(p52.east);
                    \draw[style=semithick, -latex] (p63.west)--(p53.east);
                    \draw[style=semithick, -latex] (p64.west)--(p54.east);
                    \node at (-1.5,0) {.}; \node at (-1,0) {.}; \node at (-0.5,0) {.};
                     \node at (-1.5,4) {.}; \node at (-1,4) {.}; \node at (-0.5,4) {.};
                     \node at (-1.5,8) {.}; \node at (-1,8) {.}; \node at (-0.5,8) {.};
                     \node at (-1.5,12) {.}; \node at (-1,12) {.}; \node at (-0.5,12) {.};
                     \node at (21,0) {.}; \node at (21.5,0) {.}; \node at (22,0) {.};
                    \node at (21,4) {.}; \node at (21.5,4) {.}; \node at (22,4) {.};
                    \node at (21,8) {.}; \node at (21.5,8) {.}; \node at (22,8) {.};
                    \node at (21,12) {.}; \node at (21.5,12) {.}; \node at (22,12) {.};
                     \node at (0,14) {.}; \node at (0,14.5) {.}; \node at (0,15) {.};
                     \node at (4,14) {.}; \node at (4,14.5) {.}; \node at (4,15) {.};
                     \node at (8,14) {.}; \node at (8,14.5) {.}; \node at (8,15) {.};
                     \node at (12,14) {.}; \node at (12,14.5) {.}; \node at (12,15) {.};
                    \node at (16,14) {.}; \node at (16,14.5) {.}; \node at (16,15) {.};
                    \node at (20,14) {.}; \node at (20,14.5) {.}; \node at (20,15) {.};
                    \node at (0,-0.5) {.}; \node at (0,-1) {.}; \node at (0,-1.5) {.};
                    \node at (4,-0.5) {.}; \node at (4,-1) {.}; \node at (4,-1.5) {.};
                    \node at (8,-0.5) {.}; \node at (8,-1) {.}; \node at (8,-1.5) {.};
                    \node at (12,-0.5) {.}; \node at (12,-1) {.}; \node at (12,-1.5) {.};
                    \node at (16,-0.5) {.}; \node at (16,-1) {.}; \node at (16,-1.5) {.};
                    \node at (20,-0.5) {.}; \node at (20,-1) {.}; \node at (20,-1.5) {.};
\end{tikzpicture}
\]

\noindent
The following figure shows the last type of the hereditary and saturated subset, the shaded area, of $\Lambda$, of which the complement is a maximal tail.
\[
\begin{tikzpicture}[scale=0.28]
                    \filldraw[color=gray!15!white] (10,-2) rectangle (23,16);
                    \filldraw[color=gray!15!white] (-2,6) rectangle (23,16);
                    \node at (7.3, 3) {$w$};
                    \node at (10,-3) {Figure 3};
                    \node[circle,inner sep=0pt] (p11) at (0, 0)
                    {\begin{tikzpicture}[scale=0.4]
                    \node at (0.9, 0.8) [draw, fill=black] {$.$};
                     \end{tikzpicture}}
                            edge[-latex, loop, out=170, in=110, min distance=60, looseness=2] (p11)
                            edge[-latex, loop, out=180, in=100, min distance=90, looseness=3] (p11)
                            edge[-latex, loop, out=280, in=340, min distance=60, dashed, looseness=2] (p11)
                            edge[-latex, loop, out=270, in=350, min distance=90, dashed, looseness=3] (p11)
                            ;
                    \node[circle,inner sep=0pt] (p12) at (0, 4)
                    {\begin{tikzpicture}[scale=0.4]
                    \node at (0.9, 0.8) [draw, fill=black] {$.$};
                     \end{tikzpicture}}
                            edge[-latex, loop, out=170, in=110, min distance=60, looseness=2] (p12)
                            edge[-latex, loop, out=180, in=100, min distance=90, looseness=3] (p12)
                            edge[-latex, loop, out=280, in=340, min distance=60, dashed, looseness=2] (p12)
                            edge[-latex, loop, out=270, in=350, min distance=90, dashed, looseness=3] (p12)
                            ;
                    \node[circle,inner sep=0pt] (p13) at (0, 8)
                    {\begin{tikzpicture}[scale=0.4]
                    \node at (0.9, 0.8) [draw, fill=black] {$.$};
                     \end{tikzpicture}}
                            edge[-latex, loop, out=170, in=110, min distance=60, looseness=2] (p13)
                            edge[-latex, loop, out=180, in=100, min distance=90, looseness=3] (p13)
                            edge[-latex, loop, out=280, in=340, min distance=60, dashed, looseness=2] (p13)
                            edge[-latex, loop, out=270, in=350, min distance=90, dashed, looseness=3] (p13)
                            ;
                    \node[circle,inner sep=0pt] (p14) at (0, 12)
                    {\begin{tikzpicture}[scale=0.4]
                    \node at (0.9, 0.8) [draw, fill=black] {$.$};
                     \end{tikzpicture}}
                            edge[-latex, loop, out=170, in=110, min distance=60, looseness=2] (p14)
                            edge[-latex, loop, out=180, in=100, min distance=90, looseness=3] (p14)
                            edge[-latex, loop, out=280, in=340, min distance=60, dashed, looseness=2] (p14)
                            edge[-latex, loop, out=270, in=350, min distance=90, dashed, looseness=3] (p14)
                            ;
                     \draw[style=semithick, dashed, -latex] (p12.south)--(p11.north);
                     \draw[style=semithick, dashed, -latex] (p13.south)--(p12.north);
                     \draw[style=semithick, dashed, -latex] (p14.south)--(p13.north);
                    \node[circle,inner sep=0pt] (p21) at (4, 0)
                    {\begin{tikzpicture}[scale=0.4]
                    \node at (0.9, 0.8) [draw, fill=black] {$.$};
                     \end{tikzpicture}}
                            edge[-latex, loop, out=170, in=110, min distance=60, looseness=2] (p21)
                            edge[-latex, loop, out=180, in=100, min distance=90, looseness=3] (p21)
                            edge[-latex, loop, out=280, in=340, min distance=60, dashed, looseness=2] (p21)
                            edge[-latex, loop, out=270, in=350, min distance=90, dashed, looseness=3] (p21)
                            ;
                    \node[circle,inner sep=0pt] (p22) at (4, 4)
                    {\begin{tikzpicture}[scale=0.4]
                    \node at (0.9, 0.8) [draw, fill=black] {$.$};
                     \end{tikzpicture}}
                            edge[-latex, loop, out=170, in=110, min distance=60, looseness=2] (p22)
                            edge[-latex, loop, out=180, in=100, min distance=90, looseness=3] (p22)
                            edge[-latex, loop, out=280, in=340, min distance=60, dashed, looseness=2] (p22)
                            edge[-latex, loop, out=270, in=350, min distance=90, dashed, looseness=3] (p22)
                            ;
                    \node[circle,inner sep=0pt] (p23) at (4, 8)
                    {\begin{tikzpicture}[scale=0.4]
                    \node at (0.9, 0.8) [draw, fill=black] {$.$};
                     \end{tikzpicture}}
                            edge[-latex, loop, out=170, in=110, min distance=60, looseness=2] (p23)
                            edge[-latex, loop, out=180, in=100, min distance=90, looseness=3] (p23)
                            edge[-latex, loop, out=280, in=340, min distance=60, dashed, looseness=2] (p23)
                            edge[-latex, loop, out=270, in=350, min distance=90, dashed, looseness=3] (p23)
                            ;
                    \node[circle,inner sep=0pt] (p24) at (4, 12)
                    {\begin{tikzpicture}[scale=0.4]
                    \node at (0.9, 0.8) [draw, fill=black] {$.$};
                     \end{tikzpicture}}
                            edge[-latex, loop, out=170, in=110, min distance=60, looseness=2] (p24)
                            edge[-latex, loop, out=180, in=100, min distance=90, looseness=3] (p24)
                            edge[-latex, loop, out=280, in=340, min distance=60, dashed, looseness=2] (p24)
                            edge[-latex, loop, out=270, in=350, min distance=90, dashed, looseness=3] (p24)
                            ;
                     \draw[style=semithick, dashed, -latex] (p22.south)--(p21.north);
                     \draw[style=semithick, dashed, -latex] (p23.south)--(p22.north);
                     \draw[style=semithick, dashed, -latex] (p24.south)--(p23.north);
                     \draw[style=semithick, -latex] (p21.west)--(p11.east);
                    \draw[style=semithick, -latex] (p22.west)--(p12.east);
                    \draw[style=semithick, -latex] (p23.west)--(p13.east);
                    \draw[style=semithick, -latex] (p24.west)--(p14.east);
                    \node[circle,inner sep=0pt] (p31) at (8, 0)
                    {\begin{tikzpicture}[scale=0.4]
                    \node at (0.9, 0.8) [draw, fill=black] {$.$};
                     \end{tikzpicture}}
                            edge[-latex, loop, out=170, in=110, min distance=60, looseness=2] (p31)
                            edge[-latex, loop, out=180, in=100, min distance=90, looseness=3] (p31)
                            edge[-latex, loop, out=280, in=340, min distance=60, dashed, looseness=2] (p31)
                            edge[-latex, loop, out=270, in=350, min distance=90, dashed, looseness=3] (p31)
                            ;
                    \node[circle,inner sep=0pt] (p32) at (8, 4)
                    {\begin{tikzpicture}[scale=0.4]
                    \node at (0.9, 0.8) [draw, fill=black] {$.$};
                     \end{tikzpicture}}
                            edge[-latex, loop, out=170, in=110, min distance=60, looseness=2] (p32)
                            edge[-latex, loop, out=180, in=100, min distance=90, looseness=3] (p32)
                            edge[-latex, loop, out=280, in=340, min distance=60, dashed, looseness=2] (p32)
                            edge[-latex, loop, out=270, in=350, min distance=90, dashed, looseness=3] (p32)
                            ;
                    \node[circle,inner sep=0pt] (p33) at (8, 8)
                    {\begin{tikzpicture}[scale=0.4]
                    \node at (0.9, 0.8) [draw, fill=black] {$.$};
                     \end{tikzpicture}}
                            edge[-latex, loop, out=170, in=110, min distance=60, looseness=2] (p33)
                            edge[-latex, loop, out=180, in=100, min distance=90, looseness=3] (p33)
                            edge[-latex, loop, out=280, in=340, min distance=60, dashed, looseness=2] (p33)
                            edge[-latex, loop, out=270, in=350, min distance=90, dashed, looseness=3] (p33)
                            ;
                    \node[circle,inner sep=0pt] (p34) at (8, 12)
                    {\begin{tikzpicture}[scale=0.4]
                    \node at (0.9, 0.8) [draw, fill=black] {$.$};
                     \end{tikzpicture}}
                            edge[-latex, loop, out=170, in=110, min distance=60, looseness=2] (p34)
                            edge[-latex, loop, out=180, in=100, min distance=90, looseness=3] (p34)
                            edge[-latex, loop, out=280, in=340, min distance=60, dashed, looseness=2] (p34)
                            edge[-latex, loop, out=270, in=350, min distance=90, dashed, looseness=3] (p34)
                            ;
                     \draw[style=semithick, dashed, -latex] (p32.south)--(p31.north);
                     \draw[style=semithick, dashed, -latex] (p33.south)--(p32.north);
                     \draw[style=semithick, dashed, -latex] (p34.south)--(p33.north);
                     \draw[style=semithick, -latex] (p31.west)--(p21.east);
                    \draw[style=semithick, -latex] (p32.west)--(p22.east);
                    \draw[style=semithick, -latex] (p33.west)--(p23.east);
                    \draw[style=semithick, -latex] (p34.west)--(p24.east);
                    \node[circle,inner sep=0pt] (p41) at (12, 0)
                    {\begin{tikzpicture}[scale=0.4]
                    \node at (0.9, 0.8) [draw, fill=black] {$.$};
                     \end{tikzpicture}}
                            edge[-latex, loop, out=170, in=110, min distance=60, looseness=2] (p41)
                            edge[-latex, loop, out=180, in=100, min distance=90, looseness=3] (p41)
                            edge[-latex, loop, out=280, in=340, min distance=60, dashed, looseness=2] (p41)
                            edge[-latex, loop, out=270, in=350, min distance=90, dashed, looseness=3] (p41)
                            ;
                    \node[circle,inner sep=0pt] (p42) at (12, 4)
                    {\begin{tikzpicture}[scale=0.4]
                    \node at (0.9, 0.8) [draw, fill=black] {$.$};
                     \end{tikzpicture}}
                            edge[-latex, loop, out=170, in=110, min distance=60, looseness=2] (p42)
                            edge[-latex, loop, out=180, in=100, min distance=90, looseness=3] (p42)
                            edge[-latex, loop, out=280, in=340, min distance=60, dashed, looseness=2] (p42)
                            edge[-latex, loop, out=270, in=350, min distance=90, dashed, looseness=3] (p42)
                            ;
                    \node[circle,inner sep=0pt] (p43) at (12, 8)
                    {\begin{tikzpicture}[scale=0.4]
                    \node at (0.9, 0.8) [draw, fill=black] {$.$};
                     \end{tikzpicture}}
                            edge[-latex, loop, out=170, in=110, min distance=60, looseness=2] (p43)
                            edge[-latex, loop, out=180, in=100, min distance=90, looseness=3] (p43)
                            edge[-latex, loop, out=280, in=340, min distance=60, dashed, looseness=2] (p43)
                            edge[-latex, loop, out=270, in=350, min distance=90, dashed, looseness=3] (p43)
                            ;
                    \node[circle,inner sep=0pt] (p44) at (12, 12)
                    {\begin{tikzpicture}[scale=0.4]
                    \node at (0.9, 0.8) [draw, fill=black] {$.$};
                     \end{tikzpicture}}
                            edge[-latex, loop, out=170, in=110, min distance=60, looseness=2] (p44)
                            edge[-latex, loop, out=180, in=100, min distance=90, looseness=3] (p44)
                            edge[-latex, loop, out=280, in=340, min distance=60, dashed, looseness=2] (p44)
                            edge[-latex, loop, out=270, in=350, min distance=90, dashed, looseness=3] (p44)
                            ;
                     \draw[style=semithick, dashed, -latex] (p42.south)--(p41.north);
                     \draw[style=semithick, dashed, -latex] (p43.south)--(p42.north);
                     \draw[style=semithick, dashed, -latex] (p44.south)--(p43.north);
                     \draw[style=semithick, -latex] (p41.west)--(p31.east);
                    \draw[style=semithick, -latex] (p42.west)--(p32.east);
                    \draw[style=semithick, -latex] (p43.west)--(p33.east);
                    \draw[style=semithick, -latex] (p44.west)--(p34.east);
                    \node[circle,inner sep=0pt] (p51) at (16, 0)
                    {\begin{tikzpicture}[scale=0.4]
                    \node at (0.9, 0.8) [draw, fill=black] {$.$};
                     \end{tikzpicture}}
                            edge[-latex, loop, out=170, in=110, min distance=60, looseness=2] (p51)
                            edge[-latex, loop, out=180, in=100, min distance=90, looseness=3] (p51)
                            edge[-latex, loop, out=280, in=340, min distance=60, dashed, looseness=2] (p51)
                            edge[-latex, loop, out=270, in=350, min distance=90, dashed, looseness=3] (p51)
                            ;
                    \node[circle,inner sep=0pt] (p52) at (16, 4)
                    {\begin{tikzpicture}[scale=0.4]
                    \node at (0.9, 0.8) [draw, fill=black] {$.$};
                     \end{tikzpicture}}
                            edge[-latex, loop, out=170, in=110, min distance=60, looseness=2] (p52)
                            edge[-latex, loop, out=180, in=100, min distance=90, looseness=3] (p52)
                            edge[-latex, loop, out=280, in=340, min distance=60, dashed, looseness=2] (p52)
                            edge[-latex, loop, out=270, in=350, min distance=90, dashed, looseness=3] (p52)
                            ;
                    \node[circle,inner sep=0pt] (p53) at (16, 8)
                    {\begin{tikzpicture}[scale=0.4]
                    \node at (0.9, 0.8) [draw, fill=black] {$.$};
                     \end{tikzpicture}}
                            edge[-latex, loop, out=170, in=110, min distance=60, looseness=2] (p53)
                            edge[-latex, loop, out=180, in=100, min distance=90, looseness=3] (p53)
                            edge[-latex, loop, out=280, in=340, min distance=60, dashed, looseness=2] (p53)
                            edge[-latex, loop, out=270, in=350, min distance=90, dashed, looseness=3] (p53)
                            ;
                    \node[circle,inner sep=0pt] (p54) at (16, 12)
                    {\begin{tikzpicture}[scale=0.4]
                    \node at (0.9, 0.8) [draw, fill=black] {$.$};
                     \end{tikzpicture}}
                            edge[-latex, loop, out=170, in=110, min distance=60, looseness=2] (p54)
                            edge[-latex, loop, out=180, in=100, min distance=90, looseness=3] (p54)
                            edge[-latex, loop, out=280, in=340, min distance=60, dashed, looseness=2] (p54)
                            edge[-latex, loop, out=270, in=350, min distance=90, dashed, looseness=3] (p54)
                            ;
                     \draw[style=semithick, dashed, -latex] (p52.south)--(p51.north);
                     \draw[style=semithick, dashed, -latex] (p53.south)--(p52.north);
                     \draw[style=semithick, dashed, -latex] (p54.south)--(p53.north);
                     \draw[style=semithick, -latex] (p51.west)--(p41.east);
                    \draw[style=semithick, -latex] (p52.west)--(p42.east);
                    \draw[style=semithick, -latex] (p53.west)--(p43.east);
                    \draw[style=semithick, -latex] (p54.west)--(p44.east);
                    \node[circle,inner sep=0pt] (p61) at (20, 0)
                    {\begin{tikzpicture}[scale=0.4]
                    \node at (0.9, 0.8) [draw, fill=black] {$.$};
                     \end{tikzpicture}}
                            edge[-latex, loop, out=170, in=110, min distance=60, looseness=2] (p61)
                            edge[-latex, loop, out=180, in=100, min distance=90, looseness=3] (p61)
                            edge[-latex, loop, out=280, in=340, min distance=60, dashed, looseness=2] (p61)
                            edge[-latex, loop, out=270, in=350, min distance=90, dashed, looseness=3] (p61)
                            ;
                    \node[circle,inner sep=0pt] (p62) at (20, 4)
                    {\begin{tikzpicture}[scale=0.4]
                    \node at (0.9, 0.8) [draw, fill=black] {$.$};
                     \end{tikzpicture}}
                            edge[-latex, loop, out=170, in=110, min distance=60, looseness=2] (p62)
                            edge[-latex, loop, out=180, in=100, min distance=90, looseness=3] (p62)
                            edge[-latex, loop, out=280, in=340, min distance=60, dashed, looseness=2] (p62)
                            edge[-latex, loop, out=270, in=350, min distance=90, dashed, looseness=3] (p62)
                            ;
                    \node[circle,inner sep=0pt] (p63) at (20, 8)
                    {\begin{tikzpicture}[scale=0.4]
                    \node at (0.9, 0.8) [draw, fill=black] {$.$};
                     \end{tikzpicture}}
                            edge[-latex, loop, out=170, in=110, min distance=60, looseness=2] (p63)
                            edge[-latex, loop, out=180, in=100, min distance=90, looseness=3] (p63)
                            edge[-latex, loop, out=280, in=340, min distance=60, dashed, looseness=2] (p63)
                            edge[-latex, loop, out=270, in=350, min distance=90, dashed, looseness=3] (p63)
                            ;
                    \node[circle,inner sep=0pt] (p64) at (20, 12)
                    {\begin{tikzpicture}[scale=0.4]
                    \node at (0.9, 0.8) [draw, fill=black] {$.$};
                     \end{tikzpicture}}
                            edge[-latex, loop, out=170, in=110, min distance=60, looseness=2] (p64)
                            edge[-latex, loop, out=180, in=100, min distance=90, looseness=3] (p64)
                            edge[-latex, loop, out=280, in=340, min distance=60, dashed, looseness=2] (p64)
                            edge[-latex, loop, out=270, in=350, min distance=90, dashed, looseness=3] (p64)
                            ;
                     \draw[style=semithick, dashed, -latex] (p62.south)--(p61.north);
                     \draw[style=semithick, dashed, -latex] (p63.south)--(p62.north);
                     \draw[style=semithick, dashed, -latex] (p64.south)--(p63.north);
                     \draw[style=semithick, -latex] (p61.west)--(p51.east);
                    \draw[style=semithick, -latex] (p62.west)--(p52.east);
                    \draw[style=semithick, -latex] (p63.west)--(p53.east);
                    \draw[style=semithick, -latex] (p64.west)--(p54.east);
                    \node at (-1.5,0) {.}; \node at (-1,0) {.}; \node at (-0.5,0) {.};
                     \node at (-1.5,4) {.}; \node at (-1,4) {.}; \node at (-0.5,4) {.};
                     \node at (-1.5,8) {.}; \node at (-1,8) {.}; \node at (-0.5,8) {.};
                     \node at (-1.5,12) {.}; \node at (-1,12) {.}; \node at (-0.5,12) {.};
                     \node at (21,0) {.}; \node at (21.5,0) {.}; \node at (22,0) {.};
                    \node at (21,4) {.}; \node at (21.5,4) {.}; \node at (22,4) {.};
                    \node at (21,8) {.}; \node at (21.5,8) {.}; \node at (22,8) {.};
                    \node at (21,12) {.}; \node at (21.5,12) {.}; \node at (22,12) {.};
                     \node at (0,14) {.}; \node at (0,14.5) {.}; \node at (0,15) {.};
                     \node at (4,14) {.}; \node at (4,14.5) {.}; \node at (4,15) {.};
                     \node at (8,14) {.}; \node at (8,14.5) {.}; \node at (8,15) {.};
                     \node at (12,14) {.}; \node at (12,14.5) {.}; \node at (12,15) {.};
                    \node at (16,14) {.}; \node at (16,14.5) {.}; \node at (16,15) {.};
                    \node at (20,14) {.}; \node at (20,14.5) {.}; \node at (20,15) {.};
                    \node at (0,-0.5) {.}; \node at (0,-1) {.}; \node at (0,-1.5) {.};
                    \node at (4,-0.5) {.}; \node at (4,-1) {.}; \node at (4,-1.5) {.};
                    \node at (8,-0.5) {.}; \node at (8,-1) {.}; \node at (8,-1.5) {.};
                    \node at (12,-0.5) {.}; \node at (12,-1) {.}; \node at (12,-1.5) {.};
                    \node at (16,-0.5) {.}; \node at (16,-1) {.}; \node at (16,-1.5) {.};
                    \node at (20,-0.5) {.}; \node at (20,-1) {.}; \node at (20,-1.5) {.};
\end{tikzpicture}
\]

\noindent
To describe the topology of $\text{Prim}\;C^{\ast}(\Lambda)$ we first identify the vertices of $\Lambda$ with $\mathbb{Z}^2$. It is tedious, but not difficult to show
that every maximal tail in $\chi_\Lambda$ is of the form depicted in Figures 1,2,3, plus the tail $\Lambda^0$. In Figure 1, we have
a tail of the form
\[
H_{b} = \{ (x,y) \in \mathbb{Z}^2 : y \le b \} \text{ for each } b\in\mathbb{Z} .
\]

\noindent In Figure 2, we we have a tail of the form
\[
V_{a} = \{(x,y) \in\mathbb{Z}^2 : x \le a \} \text{ for each }
a \in \mathbb{Z} .
\]

\noindent In Figure 3, we have a tail of the form
\[
LQ_{(a,b)} = \{ (x,y) \in \mathbb{Z}^2 : x \le a , y \le b \} \text{ for each }
a,b \in \mathbb{Z} .
\]

\noindent So
\[
\chi_\Lambda = \{ V_{a} : a \in \mathbb{Z} \}
\cup \{ H_{b} : b \in \mathbb{Z} \} \cup \{ LQ_{(a,b)} : (a,b) \in \mathbb{Z}^2 \} \cup \{ \Lambda^0 \}.
\]

\noindent
Recall that the closed set $\overline{S}$ of $\chi_{\Lambda}$ given in Theorem \ref{topMT} is $\overline{S}=\{\delta\in\chi_{\Lambda}:\delta \subseteq \bigcup_{\gamma\in S}\gamma\}$. Hence
\begin{align*}
\overline{\{ H_b \}} = \{ H_{d} : d \le b \} , \qquad &  \overline{\{LQ_{(a,b)}\}} = \{ LQ_{(c,d)} : (c,d) \le (a,b) \} , \\
\overline{\{V_c \}} = \{  V_a : c \le a \}, \qquad & \overline{\{ \Lambda^0 \}} = \chi_\Lambda ,
\end{align*}

\noindent so $\Lambda^0$ is a dense point in $\chi_\Lambda$. Since 
\[
\overline{ \{ LQ_{(a,b)} \} }^c = \overline{ \{ V_a \} }^c \cup \overline{ \{ H_b \} }^c = \{ V_c : c > a \} \cup \{ H_{d} : d > b \}  \cup \{ \Lambda^0 \} , 
\]

\noindent a basis for the topology on $\chi_\Lambda$ is
\[
\left\{ \{ H_{d} : d > b \} \cup \{ \Lambda^0 \} : b \in \mathbb{Z}\right \} , \left\{  \{ V_c : c > a \} \cup \{ \Lambda^0 \} : c \in \mathbb{Z} \right\} .
\]

\noindent
Let $X = \mathbb{Z}\cup\{\infty\}$ and extend the order on $\mathbb{Z}$ to $X$ by declaring $x\le \infty$ for all $x\in\mathbb{Z}$. Then $X$ is totally ordered and has right-order topology with basis $(a,\infty]=\{x\in\mathbb{Z}: x> a\}$. Let $Y= X \times X$ with the product topology. Define a map $\phi : \chi_\Lambda \to Y$ by
\[
\phi ( H_b ) = ( \infty , b ) , \  \phi ( V_a ) =(a , \infty), \ \phi ( LQ_{(a,b)} ) = (a,b)  , \
\phi ( \Lambda^0 ) = ( \infty , \infty ) ,
\]

\noindent then $\phi$ is a bijection such that
\[
\phi \left(  \{ H_{d} : d > b \} \cup \{ \Lambda^0 \} \right) = X \times (b,\infty)
\text{ and } \phi \left( \{ V_c : c > a \} \cup \{ \Lambda^0 \} \right) = (a,\infty) \times X ,
\]

\noindent and so it is a homeomorphism. Therefore, the topology of $\text{Prim}\;C^{\ast}(\Lambda)$ corresponds to the right-order topology of $(\mathbb{Z}\cup\{\infty\})\times(\mathbb{Z}\cup\{\infty\})$.
\end{example}

\begin{example} \label{ex2}
Let $\Omega$ be the $1$ graph with the following $1$-skeleton.
\[\begin{tikzpicture}[scale=0.35]
                    \node at (-5,1) {$\Omega=$};
                    \node at (-2,0) {$\dots$};
                    \node at (22,0) {$\dots$};
                    \node[circle,inner sep=0pt] (p11) at (0, 0)
                    {\begin{tikzpicture}[scale=0.4]
                    \node at (0.9, 0.8) [draw, fill=black] {$.$};
                     \end{tikzpicture}}
                            edge[-latex, loop, out=120, in=60, min distance=60, looseness=2] (p11)
                            edge[-latex, loop, out=140, in=40, min distance=120, looseness=2.2] (p11);
                    \node[circle,inner sep=0pt] (p12) at (4, 0)
                    {\begin{tikzpicture}[scale=0.4]
                    \node at (0.9, 0.8) [draw, fill=black] {$.$};
                     \end{tikzpicture}}
                            edge[-latex, loop, out=120, in=60, min distance=60, looseness=2] (p12)
                            edge[-latex, loop, out=140, in=40, min distance=120, looseness=2.2] (p12);
                     \node[circle,inner sep=0pt] (p13) at (8, 0)
                    {\begin{tikzpicture}[scale=0.4]
                    \node at (0.9, 0.8) [draw, fill=black] {$.$};
                     \end{tikzpicture}}
                            edge[-latex, loop, out=120, in=60, min distance=60, looseness=2] (p13)
                            edge[-latex, loop, out=140, in=40, min distance=120, looseness=2.2] (p13);
                    \node[circle,inner sep=0pt] (p14) at (12, 0)
                    {\begin{tikzpicture}[scale=0.4]
                    \node at (0.9, 0.8) [draw, fill=black] {$.$};
                     \end{tikzpicture}}
                            edge[-latex, loop, out=120, in=60, min distance=60, looseness=2] (p14)
                            edge[-latex, loop, out=140, in=40, min distance=120, looseness=2.2] (p14);
                     \node[circle,inner sep=0pt] (p15) at (16, 0)
                    {\begin{tikzpicture}[scale=0.4]
                    \node at (0.9, 0.8) [draw, fill=black] {$.$};
                     \end{tikzpicture}}
                            edge[-latex, loop, out=120, in=60, min distance=60, looseness=2] (p15)
                            edge[-latex, loop, out=140, in=40, min distance=120, looseness=2.2] (p15);
                    \node[circle,inner sep=0pt] (p16) at (20, 0)
                    {\begin{tikzpicture}[scale=0.4]
                    \node at (0.9, 0.8) [draw, fill=black] {$.$};
                     \end{tikzpicture}}
                            edge[-latex, loop, out=120, in=60, min distance=60, looseness=2] (p16)
                            edge[-latex, loop, out=140, in=40, min distance=120, looseness=2.2] (p16);
                     \draw[style=semithick, -latex] (p12.west)--(p11.east);
                     \draw[style=semithick, -latex] (p13.west)--(p12.east);
                     \draw[style=semithick, -latex] (p14.west)--(p13.east);
                     \draw[style=semithick, -latex] (p15.west)--(p14.east);
                     \draw[style=semithick, -latex] (p16.west)--(p15.east);
\end{tikzpicture}\]

\noindent  Then the 1-skeleton of $\Omega\times\Omega$ is the same as the 1-skeleton of $\Lambda$ in the above example. Since $\Omega$ satisfies condition (K), it follows that $\Omega$ is strongly aperiodic by Lemma~\ref{lem:KisSA}. Then by Theorem~\ref{prod} (b), $\Omega\times\Omega$ is strongly aperiodic. If we identify the vertices of $\Omega$ with $\mathbb{Z}$, it is straightforward to see that every saturated hereditary subset of $\Omega^0$
is of the form $[n+1,+\infty)$ for some $n \in \mathbb{Z}$, or the empty set $\emptyset$, or $\Omega^0$. Similarly every maximal tail is
of the form $ \chi_n := ( - \infty , n ]$  for some $n \in \mathbb{Z}$, or $\Omega^0$ (recall that a maximal tail is nonempty). So
$\chi_\Omega = \{ \chi_n : n \in \mathbb{Z} \} \cup \{ \Omega^0 \}$ which we may identify (as a set) with $\mathbb{Z} \cup \{ \infty \}$.
The closure of $\{ \chi_n \}$ is $\{ \chi_j : j \le n \}$ and the closure of $\Omega^0$ is $\chi_\Omega$. Thus, the nontrivial open sets in
$\chi_\Omega$ are of the form $\{ \chi_j : j > n \} \cup \{ \Omega^0 \}$. When we identify $\chi_\Omega$ with
$\mathbb{Z} \cup \{ \infty \}$, this topology coincides with the right-order topology on $\mathbb{Z} \cup \{ \infty \}$.
By \cite[Corollary 3.5 (iv)]{KP1} we have $C^* ( \Omega \times \Omega ) \cong C^* ( \Omega ) \otimes C^* ( \Omega )$, so by \cite{W}
the primitive ideal space of $C^* ( \Omega \times \Omega )$ is the cartesian product of the primitive ideal space of $C^* ( \Omega )$
with itself with the product topology.
\end{example}

\begin{remark}
The primitive ideal space of $C^* ( \Omega \times \Omega)$ is precisely the same topology as the primitive ideal space
of  $C^* ( \Lambda )$ described in Example~\ref{ex1}. Even though $\Lambda$ and $\Omega$ are not isomorphic,
it is unclear whether $C^* ( \Lambda )$ and $C^* ( \Omega \times \Omega )$ are isomorphic.
\end{remark}

\end{document}